\documentclass[12pt,a4paper]{amsart}
\usepackage{amsmath,amsthm,amssymb,latexsym,a4wide,bbm,framed,mathdots}
\usepackage{pict2e,curve2e}
\newcommand{\drawline}{\Line}
\newcommand{\dashline}{\Dashline}
\newcommand{\dottedline}{\Dotline}

\newtheorem{theorem}{Theorem}
\newtheorem{lemma}[theorem]{Lemma}
\newtheorem{corollary}[theorem]{Corollary}
\newtheorem{proposition}[theorem]{Proposition}

\newtheorem{problem}[theorem]{Problem}
\newtheorem{remark}[theorem]{Remark}
\usepackage[all]{xy}
\usepackage[active]{srcltx}
\usepackage[parfill]{parskip}
\usepackage{enumerate}
\numberwithin{equation}{section}

\font\sc=rsfs10

\newcommand{\cP}{\sc\mbox{P}\hspace{1.0pt}}

\begin{document}

\title[Principal ideals in $d$-tonal partition monoid]
{On the number of principal ideals\\ 
in $d$-tonal partition monoids}
\author{Chwas Ahmed,  Paul Martin and Volodymyr Mazorchuk}

\begin{abstract}
For a positive integer $d$, a non-negative integer $n$  and a non-negative
integer $h\leq n$, we study 
the number $C_{n}^{(d)}$ of principal ideals; and 
the number $C_{n,h}^{(d)}$ of principal ideals generated by an element of rank $h$, 
in the $d$-tonal partition monoid on $n$ elements. 
We compute closed forms for the first family,
as partial cumulative sums of known sequences.
The second gives an infinite family of new integral sequences. 
We discuss 
their connections to certain integral lattices as well as to
combinatorics of partitions. 
\end{abstract}
\maketitle

\section{Introduction and description of the results}\label{s1}

Enumeration is often the starting point in understanding a given mathematical structure.
Twisted monoid algebras \cite{GMS,RZ} 
of $d$-tonal partition monoids appear in \cite{Ta} as right Schur-Weyl
duals for generalized symmetric groups. These algebras are subalgebras of the classical partition
algebras from \cite{Mar0,Mar} and \cite{Jo}. 
The monoids underlying the 
latter algebras have relatively simple principal ideal structure 
and well studied representation theory, see \cite{Mar,Mar2}. 
The $d$-tonal subalgebras of partition algebras are  
more complicated. Some basics representation theory of these and related algebras was
developed in \cite{Ko1,Ko2,Ko3} and \cite{Or}. However 
in the monoid case, for example, these studies cover only a trivial quotient.

The motivation for the present paper comes from our attempt to
understand the structure of $d$-tonal partition
algebras using combinatorics of Green's relations 
for the finite  $d$-tonal partition monoid. 
The main question we 
answer 
in the present paper is what is the number of different principal
$2$-sided ideals in such a monoid. 
This already depends on two parameters: the difference parameter $d$
and the parameter $n$ which controls 
the size of our partitions. We denote the number of such ideals by $C_n^{(d)}$. Algebraically, there is
a natural third parameter which enters the picture: the rank $h\in\{0,1,\dots,n\}$ of the generating partition. 
Using this parameter we write
\begin{displaymath}
C_n^{(d)}=C_{n,0}^{(d)}+C_{n,1}^{(d)}+C_{n,2}^{(d)}+\dots+C_{n,n}^{(d)},
\end{displaymath}
where $C_{n,h}^{(d)}$ denotes the number
of ideals generated by an element of rank $h$.
We seek a closed formula for both $C_{n,h}^{(d)}$ and for $C_n^{(d)}$.
Cases $d=1$  
and $d=2$ turn out to be easy.

In Section~\ref{s2} we give an alternative, purely combinatorial, definition for 
the numbers $C_n^{(d)}$ as enumerators  of layers in certain graded posets. 
These are related to the original motivation in Section~\ref{s5}.
The main part of the paper is devoted to the study of the case $d=3$ which occupies
Section~\ref{s3}. 

Extra motivation for the case $d=3$ comes from its intrinsic 
geometric-physical interest.
We give an explicit formula for $C_{n,h}^{(3)}$ in case
$h$ is relatively big (i.e. $h\geq \lfloor\frac{n}{2}\rfloor$), see Proposition~\ref{prop5}, 
and in case $h$ is relatively small (i.e. $h\leq \lceil\frac{n}{3}\rceil$), see Proposition~\ref{prop7}.
The former gives a connection of our sequence to partitions with at most three parts while the
latter shows a connection to triangular numbers (in fact, to a special counting of triangular numbers
modulo $3$). 
Our first main result is that the sequence  $C_n^{(3)}$ is given by 
the ``Cyvin sequence'' 
($A028289$ in \cite{OEIS}) 
which enumerates the number of isomorphism classes
of hollow hexagons (representing polycyclic hydrocarbons), see \cite{CBC,Pol}. 
In Theorem~\ref{thmn72} of Section~\ref{s55} we even give an explicit bijection 
between hollow hexagons and the graded poset underlying the definition
of $C_n^{(3)}$ 
given in Section~\ref{s2}. 

In Section~\ref{s4} we relate our graded posets to combinatorics of partitions and in Section~\ref{s5} we  makes precise the connection between the combinatorially defined 
data discussed in the paper and the algebraic structures which motivate our investigation.
Combinatorics which underlines the algebraic structure allows us to
determine $C_n^{(d)}$ for all $d$ and $n$ in terms of partitions 
with at most $d$ parts, see Theorem~\ref{thmdpart} in Section~\ref{s6}. 
As a corollary of this uniform description for all $d$, we obtain an alternative,
simpler, description of $A028289$ using partitions with at most $3$ parts. 

\noindent
{\bf Acknowledgements.} An essential part of the research was done during the 
visit of the third author to Leeds in October 2014, which was
supported by EPSRC under grant EP/I038683/1.   
The paper was completed during the stay of the second and the third
authors at the Institute Mittag-Leffler in the spring of 2015. 
The financial support and hospitality of both the University of Leeds and
the Institute Mittag-Leffler are gratefully acknowledged.
The first author is supported by the KRG via the HCDP Scholarship program.
For the third author the research was partially supported by 
the Royal Swedish Academy of Sciences, Knut and Alice Wallenbergs Stiftelse 
and the Swedish Research Council. We thank M.~Kosuda for helpful discussions.
We thank the referee for helpful comments and for pointing out several
inaccuracies in the original version of the paper.

\section{Graded posets}\label{s2}

\subsection{Notation and general construction}\label{s2.1}

We denote by $\mathbb{R}$ the set of all real numbers,
by $\mathbb{R}_{\geq 0}$ the set of all non-negative  real numbers,
by $\mathbb{Z}$ the set of all integers, by $\mathbb{N}$ 
the set of all positive integers and by $\mathbb{Z}_{\geq 0}$ 
the set of all non-negative integers.

Consider the set $\mathbb{Z}^d$ for some fixed $d\in\mathbb{N}$. 
Elements of $\mathbb{Z}^d$ are vectors $\mathbf{v}=(v_1,v_2,\dots,v_d)$ 
such that $v_i\in \mathbb{Z}$ for all $i=1,2,\dots,d$. The number $v_1+v_2+\dots+v_d\in \mathbb{Z}$ 
is called the {\em height} of $\mathbf{v}$ and denoted $\mathrm{ht}(\mathbf{v})$. The set
$\mathbb{Z}^d$ has the natural structure of an abelian group given by addition. The map
$\mathrm{ht}:\mathbb{Z}^d\to \mathbb{Z}$ is a surjective group homomorphism. For $i=1,2,\dots,d$, 
we denote by $\mathbf{e}(i)$ the standard basis vector
$(0,0,\dots,0,1,0,0,\dots,0)$ in $\mathbb{Z}^d$,  
in which the only non-zero element $1$ stands in position $i$. Note that each $\mathbf{e}(i)$ has height $1$.

Denote by $\Lambda_d$ the subset $\mathbb{Z}_{\geq 0}^d$ in
$\mathbb{Z}^d$. For $h\in \mathbb{Z}_{\geq 0}$,  
we denote by $\Lambda_d^{(h)}$ the set of all elements in $\Lambda_d$
of height $h$ and note that the set  
$\Lambda_d^{(h)}$ is finite, in fact, a standard combinatorial exercise shows that
\begin{equation}\label{eq5}
|\Lambda_d^{(h)}|=\binom{h+d-1}{d-1}. 
\end{equation}
Define
$
\mathbb{Z}^d_{(h)} = 
 \{\mathbf{v}\in \mathbb{Z}^d\,:\, \mathrm{ht}(\mathbf{v})= h \} 
$.
For a fixed subset
\begin{displaymath}
X \subset \mathbb{Z}^d_{(-1)}
\end{displaymath}
define on $\Lambda_d$ the structure of a {\em poset} using the
transitive closure $<_X$ 
of the following 
manifestly antisymmetric relation:
\begin{equation}\label{eq1}
\mathbf{v}\lessdot_X\mathbf{w}\quad\text{ if and only if }\quad \mathbf{v}-\mathbf{w}\in X.
\end{equation}
Note from the construction that $\lessdot_X$ is a covering relation. 
Directly from the definitions we have that
$\mathbf{v}\lessdot_X\mathbf{w}$ implies  
$\mathrm{ht}(\mathbf{v})=\mathrm{ht}(\mathbf{w})-1$, for all
$\mathbf{v}$ and $\mathbf{w}$. 
In particular, the poset $(\Lambda_d,<_X)$ is a graded poset with 
{\em  rank function}  
$\mathrm{ht}:\Lambda_d\to \mathbb{Z}$. Note that $X\neq X'$ implies
$\lessdot_X\neq \lessdot_{X'}$.

\subsection{The poset $\cP_{d}$}\label{s2.2}

Consider the set
\begin{displaymath}
X_d:=\{\mathbf{e}(k)-\mathbf{e}(i)-\mathbf{e}(j)\in\mathbb{Z}^d\,:\,
i,j,k\in\{1,2,\dots,d\}\text{ such that } k\equiv i+j\text{ mod } d
\}.
\end{displaymath}
For example, 
\begin{gather*}
X_1=\{(-1)\};\quad\quad   X_2=\{(-2,1),(0,-1)\};\\
X_3=\{(1,-2,0),(-2,1,0),(-1,-1,1),(0,0,-1)\};\\
X_4=\{(0,0,0,-1),(-1,-1,1,0),(-1,0,-1,1),(1,-1,-1,0),\\(-2,1,0,0),(0,-2,0,1),(0,1,-2,0)\}.
\end{gather*}
Note that $<_{X_d}$ is defined. 
Denote by $\cP_{d}$ the poset $(\Lambda_d,<_{X_d})$. 
Finite principal ideals of $\cP_{d}$ are the main objects of interest 
in this paper. For simplicity, we will denote the relation $<_{X_d}$ by $\prec$.
For the record, we note the following.

\begin{lemma}\label{lemnnn}
We have $\lvert X_d \rvert = \frac{d(d-1)}{2} +1$.
\end{lemma}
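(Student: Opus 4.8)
The plan is to recognise $X_d$ as the image of an explicit map and then count that image. First I would observe that the divisibility condition pins down $k$: for each ordered pair $(i,j)\in\{1,\dots,d\}^2$ there is a \emph{unique} $k=k(i,j)\in\{1,\dots,d\}$ with $d\mid k-i-j$, namely the representative of $i+j$ modulo $d$ lying in $\{1,\dots,d\}$. Writing $e_{ij}:=\mathbf{e}(k(i,j))-\mathbf{e}(i)-\mathbf{e}(j)$, this identifies $X_d$ with the set $\{e_{ij}:(i,j)\in\{1,\dots,d\}^2\}$, so the problem becomes counting how many distinct vectors occur among the $e_{ij}$.

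Since both $\mathbf{e}(i)+\mathbf{e}(j)$ and $k(i,j)$ are symmetric in $i$ and $j$, we have $e_{ij}=e_{ji}$, so it suffices to range over unordered pairs $\{i,j\}$ (allowing $i=j$), of which there are $\binom{d+1}{2}=\tfrac{d(d+1)}{2}$. The crucial step is then to locate all coincidences. I would note that $k(i,j)=i$ holds exactly when $j\equiv 0\pmod d$, i.e.\ $j=d$, and symmetrically $k(i,j)=j$ exactly when $i=d$. Consequently every unordered pair containing the index $d$ --- namely $\{1,d\},\{2,d\},\dots,\{d,d\}$, a total of $d$ pairs --- satisfies $e_{ij}=-\mathbf{e}(d)$ after the cancellation $\mathbf{e}(k)-\mathbf{e}(i)=\mathbf{0}$. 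Thus these $d$ pairs collapse to a single element of $X_d$.

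It then remains to show that the remaining pairs give pairwise distinct vectors, all different from $-\mathbf{e}(d)$. For a pair $\{i,j\}$ with $i,j\in\{1,\dots,d-1\}$ one has $k(i,j)\notin\{i,j\}$, since $k=i$ or $k=j$ would force $d\in\{i,j\}$; hence $e_{ij}$ has a single positive coordinate, equal to $1$, in position $k$, while its negative coordinates record the multiset $\{i,j\}$ (two entries $-1$ when $i\neq j$, one entry $-2$ when $i=j$). So $\{i,j\}$ can be read off from $e_{ij}$, giving injectivity on these pairs, and each such vector has a positive coordinate and therefore differs from $-\mathbf{e}(d)$. Counting the unordered pairs drawn from $\{1,\dots,d-1\}$ gives $\binom{d}{2}=\tfrac{d(d-1)}{2}$ distinct vectors, and adding the single vector $-\mathbf{e}(d)$ yields $\lvert X_d\rvert=\tfrac{d(d-1)}{2}+1$.

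The main obstacle, and the only place needing genuine care, is verifying that there are no coincidences beyond the collapse of the $d$ pairs through the index $d$; this is precisely what the sign-pattern reconstruction in the last step rules out. Minor care is also needed with the diagonal pairs $i=j$ (which produce a $-2$ entry rather than two $-1$'s) and with the fact that $k(i,j)$ may equal $d$ for a non-collapsing pair (for instance when $i+j=d$); but in every such case the positive coordinate survives uncancelled, so no extra identifications arise.
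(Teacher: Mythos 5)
Your proof is correct and takes essentially the same route as the paper's: count the unordered pairs $\{i,j\}$ (with repetition allowed) and observe that exactly the $d$ pairs meeting $\{d\}$ collapse to the single vector $-\mathbf{e}(d)$, while all other pairs contribute distinct vectors. The only difference is that you explicitly verify the injectivity on the remaining pairs by reconstructing $k$ and the multiset $\{i,j\}$ from the sign pattern of $e_{ij}$, a check the paper's three-line proof leaves implicit.
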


\begin{proof}
The pair $\{i,j\}$ from the definition of $X_d$ can be chosen in
$\binom{d}{2}$ different ways for $i\neq j$ and in $d$ different ways for $i=j$.
After choosing $\{i,j\}$, the element $k$ is uniquely defined.
Note that the $d$ choices when $\{i,j\}\cap\{d\}\neq \varnothing$ result in the
same vector $-\mathbf{e}(d)$. The claim follows.  
\end{proof}

For $\mathbf{v}\in \cP_{d}$, we denote by $I(\mathbf{v})$ the principal ideal of
$\cP_{d}$ generated by $\mathbf{v}$, that is
\begin{displaymath}
I(\mathbf{v}):=\{\mathbf{v}\}\bigcup\{\mathbf{w}\in
              \cP_{d}\,:\,\mathbf{w}\prec\mathbf{v}\}. 
\end{displaymath}

For $n\in\mathbb{Z}_{\geq 0}$, we set $C_{n}^{(d)}:=|I(n\mathbf{e}(1))|$.
For $h=0,1,2,\dots,n$, we also define 
\begin{displaymath}
C_{n,h}^{(d)}:=|I(n\mathbf{e}(1))\cap \Lambda_{d}^{(h)}|. 
\end{displaymath}
Then we have $C_{n}^{(d)}=C_{n,0}^{(d)}+C_{n,1}^{(d)}+\dots+C_{n,n}^{(d)}$. 
Our interest in $I(n\mathbf{e}(1))$ will be explained in Section~\ref{s5} (see Theorem~\ref{thm34}).

We observe the following structural property of $\cP_{d}$: for $k=1,2,\dots,d$, 
consider the set $\Lambda_{d,k}$ which consists of all $\mathbf{v}\in \Lambda_{d}$
such that $v_1+2v_2+3v_3+\dots+dv_d\equiv k\text{ mod }d$. Note that $\Lambda_{d,k}\cap\Lambda_{d,k'}=
\varnothing$ if $k\neq k'$. Denote by $\cP_{d,k}$ the poset with
the underlying set $\Lambda_{d,k}$ obtained by restricting the relation $<_{X_d}$
to $\Lambda_{d,k}$. For $h\in\mathbb{Z}_{\geq 0}$, set  $\Lambda_{d,k}^{(h)}:=
\Lambda_{d,k}\cap \Lambda_{d}^{(h)}$.

\begin{proposition}\label{prop1}
{\hspace{2mm}}

\begin{enumerate}[$($i$)$]
\item\label{prop1.1} The poset  $\cP_{d}$ is a disjoint sum of subposets
$\cP_{d,k}$ for $k=1,2,\dots,d$.
\item\label{prop1.2} Each $\cP_{d,k}$ is an indecomposable poset in the sense that it is not
isomorphic to the disjoint sum of two non-empty posets. 
\end{enumerate}
\end{proposition}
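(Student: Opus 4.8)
The plan is to prove the two parts separately, establishing first that the decomposition into the $\Lambda_{d,k}$ is a decomposition of the poset (not merely of the underlying set), and then that each piece is connected in the Hasse-diagram sense. For part~(\ref{prop1.1}), I would introduce the \emph{weight} function $\mathrm{wt}(\mathbf{v}):=v_1+2v_2+\dots+dv_d$ and observe that the congruence class of $\mathrm{wt}(\mathbf{v})$ modulo $d$ is a poset invariant: concretely, each generator $\mathbf{x}=\mathbf{e}(k)-\mathbf{e}(i)-\mathbf{e}(j)\in X_d$ satisfies $\mathrm{wt}(\mathbf{x})=k-i-j$, which is divisible by $d$ by the very definition of $X_d$. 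Hence whenever $\mathbf{v}\lessdot_{X_d}\mathbf{w}$ we have $\mathrm{wt}(\mathbf{v})\equiv\mathrm{wt}(\mathbf{w})\pmod d$, so the relation $<_{X_d}$ never connects two different classes $\Lambda_{d,k}$ and $\Lambda_{d,k'}$. Since the sets $\Lambda_{d,k}$ are pairwise disjoint (as already noted) and clearly cover $\Lambda_d$ as $k$ ranges over a complete residue system, this exhibits $\cP_{d}$ as the disjoint union of the subposets $\cP_{d,k}$, proving~(\ref{prop1.1}).

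For part~(\ref{prop1.2}), the task is to show each $\cP_{d,k}$ is \emph{indecomposable}, i.e. its Hasse diagram is connected as an undirected graph; equivalently, any two elements of $\Lambda_{d,k}$ are joined by a zigzag of covering relations. Because $\mathrm{ht}$ is the rank function and $\lessdot_{X_d}$ drops height by exactly $1$, it suffices to show that every $\mathbf{v}\in\Lambda_{d,k}$ can be connected by such a zigzag to a canonical minimal-type representative in its own class, and that these representatives are themselves linked. The natural strategy is to show that from any $\mathbf{v}$ one can repeatedly apply covering steps (moving down via some $\mathbf{x}\in X_d$, i.e. replacing two ``tokens'' in coordinates $i,j$ by one in coordinate $k$ whenever $d\mid k-i-j$) to reduce $\mathbf{v}$ toward a normal form, and that conversely one can climb back up; connectivity then follows by routing any two elements through their normal forms.

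The main obstacle, and the step that needs genuine care, is verifying that the reduction is always \emph{available} inside $\Lambda_d$ (nonnegativity must be preserved at every step) and that it actually reaches a \emph{unique} or at least mutually-connected normal form within each fixed residue class $k$. Concretely I would argue as follows: using the special generator $\mathbf{e}(d)-\mathbf{e}(i)-\mathbf{e}(j)$ with $i+j\equiv 0\pmod d$ and the generators that raise the index of a single token by a multiple of $d$, one can migrate mass between coordinates while respecting the weight congruence. The key lemma to isolate is that, staying within $\Lambda_{d,k}$, any vector is connected to the ``most spread-out'' or ``most concentrated'' configuration of its height; one must check that whenever a coordinate is nonzero there is a legal covering move in or out of it that keeps all entries $\geq 0$. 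This local-move availability is where the divisibility condition $d\mid k-i-j$ does real work, and I would expect the bulk of the proof to consist of a careful case analysis (or an explicit inductive procedure on height and on the number of nonzero coordinates) showing such a move always exists, thereby forcing connectivity of the Hasse diagram of each $\cP_{d,k}$.
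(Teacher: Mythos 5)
Your part~(i) is complete and is exactly the paper's argument: the weight $\mathrm{wt}(\mathbf{v})=v_1+2v_2+\dots+dv_d$ is constant modulo $d$ along covering relations, because every $\mathbf{x}=\mathbf{e}(k)-\mathbf{e}(i)-\mathbf{e}(j)\in X_d$ has $\mathrm{wt}(\mathbf{x})=k-i-j\equiv 0\pmod d$ by definition, so $<_{X_d}$ never leaves a class $\Lambda_{d,k}$.

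For part~(ii), however, what you have written is a plan rather than a proof, and the step you yourself identify as carrying all the weight --- that a legal covering move inside $\Lambda_d$ is always available --- is precisely the step you defer (``I would expect the bulk of the proof to consist of a careful case analysis''). That verification is two lines and should have been carried out: if $\mathrm{ht}(\mathbf{v})\geq 2$, then either $v_i\geq 2$ for some $i$ (take $j=i$) or $v_i,v_j\geq 1$ for two distinct indices $i\neq j$; in either case there is a unique $k\in\{1,2,\dots,d\}$ with $d\mid k-i-j$, and then $\mathbf{v}+\mathbf{e}(k)-\mathbf{e}(i)-\mathbf{e}(j)\in\Lambda_d$, since the only coordinates that decrease are the $i$-th and $j$-th and the case assumption provides enough there. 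Moreover, your endgame is misdirected: there is no need for mutually connected ``normal forms'' at a fixed height, no need to ``climb back up,'' and no uniqueness issue to worry about, because each $\cP_{d,k}$ has a \emph{minimum} element through which everything is routed. Indeed, the unique height-one element of $\Lambda_{d,k}$ is $\mathbf{e}(k)$ (since $\mathbf{e}(m)\in\Lambda_{d,k}$ forces $m\equiv k\pmod d$, hence $m=k$), so downward induction on height, using the move above, terminates automatically at $\mathbf{e}(k)$ and yields the stronger conclusion $\mathbf{e}(k)\prec\mathbf{v}$ for every $\mathbf{v}\in\cP_{d,k}$ of height at least $2$ (with additionally $\mathbf{0}\prec\mathbf{e}(d)$ in $\cP_{d,d}$, via $-\mathbf{e}(d)\in X_d$); indecomposability follows at once. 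This is the paper's proof. Your alternative idea of migrating mass \emph{within} a level by weight-preserving zigzags toward a ``most spread-out'' configuration is both harder and unnecessary --- covering relations always change height, so same-height elements can only be joined through other levels anyway --- and your appeal to ``generators that raise the index of a single token by a multiple of $d$'' is not meaningful as stated: every element of $X_d$ has height $-1$, the only degenerate generator being $-\mathbf{e}(d)$, which removes a token outright.
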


\begin{proof}
Claim~\eqref{prop1.1} follows from the definitions since $d$ divides $v_1+2v_2+\dots+dv_d$,
for each $\mathbf{v}\in X_d$.

Note that $\mathbf{e}(k)\in \cP_{d,k}$. Therefore, to prove claim~\eqref{prop1.2} it is
enough to show that $\mathbf{e}(k)\prec \mathbf{v}$, for any $\mathbf{v}\in \cP_{d,k}$
of height at least $2$. 
However, if $\mathbf{v}$ has height at least $2$, then either $\mathbf{v}$
has a coordinate which is greater than or equal to $2$, or $\mathbf{v}$ has at least two
non-zero coordinates. Therefore there is $\mathbf{x}\in X_d$ such that 
$\mathbf{v}+\mathbf{x}\in \Lambda_{d}$. We have $\mathbf{v}+\mathbf{x}\prec \mathbf{v}$
and from the observation in the previous paragraph we
see that $\mathbf{v}+\mathbf{x}\in \Lambda_{d,k}$. Therefore 
$\mathbf{e}(k)\prec \mathbf{v}$ follows by induction on the height of $\mathbf{v}$.
This completes the proof.
\end{proof}

From the above proof it follows that, for $k\neq d$, the element $\mathbf{e}(k)$ is the
minimum element in $\cP_{d,k}$ and that the minimum element in $\cP_{d,d}$
is $\mathbf{0}:=(0,0,\dots,0)$.

\subsection{The case $d=1$}\label{s2.3}

In the case $d=1$, the map 
\begin{displaymath}
\begin{array}{ccc}
\cP_{1}&\to & (\mathbb{Z}_{\geq 0},<),\\
(i)&\mapsto & i
\end{array}
\end{displaymath}
is an isomorphism of posets. For $n\in\mathbb{Z}_{\geq 0}$, we have
\begin{displaymath}
I(n\mathbf{e}(1))=\{(0),(1),(2),\dots,(n)\} 
\end{displaymath}
and thus $C_{n}^{(1)}=n+1$. Note that in this case the poset $\cP_{1}=\cP_{1,1}$ is indecomposable.

\subsection{The case $d=2$}\label{s2.4}

Our first observation in this case  is that the maps
\begin{displaymath}
\begin{array}{ccc}
\cP_{2,1}&\to & \cP_{2,2},\\
\mathbf{v}&\mapsto & \mathbf{v}-(1,0)
\end{array}\quad\text{ and }\quad
\begin{array}{ccc}
\cP_{2,2}&\to & \cP_{2,1},\\
\mathbf{v}&\mapsto & \mathbf{v}+(1,0)
\end{array}
\end{displaymath}
are mutually inverse isomorphisms of posets. Consequently, we have
$C_{n}^{(2)}=C_{n+1}^{(2)}$, for all even $n\in\mathbb{Z}_{\geq 0}$.
The lower part of the Hasse diagram for $\cP_{2,2}$ is shown in Figure~\ref{fig1}.

\begin{figure}
\begin{displaymath}
\xymatrix{ 
\dots\ar@{-}[rd]&&\dots\ar@{-}[rd]\ar@{-}[ld]&&\dots\ar@{-}[rd]\ar@{-}[ld]&&\dots\ar@{-}[ld]\\ 
&(0,5)\ar@{-}[rd]&&(2,3)\ar@{-}[rd]\ar@{-}[ld]&&(4,1)\ar@{-}[rd]\ar@{-}[ld]&\\
&&(0,4)\ar@{-}[rd]&&(2,2)\ar@{-}[ld]\ar@{-}[rd]&&(4,0)\ar@{-}[ld]\\
&&&(0,3)\ar@{-}[rd]&&(2,1)\ar@{-}[rd]\ar@{-}[ld]&\\
&&&&(0,2)\ar@{-}[rd]&&(2,0)\ar@{-}[ld]\\
&&&&&(0,1)\ar@{-}[rd]&\\
&&&&&&(0,0)\\
} 
\end{displaymath}
\caption{Hasse diagram for $\cP_{2,2}$} \label{fig1}
\end{figure}

It follows immediately that, for $k\in\mathbb{Z}_{\geq 0}$, we have
\begin{displaymath}
C_{2k}^{(2)}=\frac{(k+1)(k+2)}{2}.
\end{displaymath}
We also note that $I(2k\mathbf{e}(1))\subset I(2(k+1)\mathbf{e}(1))$, for all
$k\in\mathbb{Z}_{\geq 0}$, and that 
\begin{displaymath}
\bigcup_{k\in\mathbb{Z}_{\geq 0}}I(2k\mathbf{e}(1))=\cP_{2,2}. 
\end{displaymath}
It is also worth pointing out that, for each $k\in\mathbb{Z}_{\geq 0}$, the poset
$I(2k\mathbf{e}(1))$ is isomorphic to the poset $I(2k\mathbf{e}(1))^{\mathrm{op}}$
(the latter is obtained from $I(2k\mathbf{e}(1))$ by reversing the partial order).

\section{The case $d=3$}\label{s3}

As we will show below, the case $d=3$ has several interesting connections 
to integral sequences. Our study of this case is the main part of 
the present paper.

\subsection{Isomorphism of $\cP_{3,1}$ and $\cP_{3,2}$}\label{s3.1}

The symmetric group $S_2$ acts on $\Lambda_3$ as follows: for $\mathbf{v}=(v_1,v_2,v_3)$
and $\pi\in S_2$ we have $\pi\cdot \mathbf{v}=(v_{\pi(1)},v_{\pi(2)},v_3)$. 
Note that the set $X_3$ (which can be found in Subsection~\ref{s2.2}) is invariant 
with respect to the action of $S_2$. Therefore this action induces an action on
$\cP_{3}$ by automorphisms. Using this action, we can swap $\mathbf{e}(1)$ and
$\mathbf{e}(2)$ and hence  $\cP_{3,1}$ and $\cP_{3,2}$ (cf. proof of Proposition~\ref{prop1}).
Therefore the posets $\cP_{3,1}$ and $\cP_{3,2}$ are isomorphic.

\subsection{An alternative description}\label{s3.2}

In this subsection we observe that $\cP_{3}$ can be defined by restriction from $\mathbb{Z}^3$.
This is a useful property for computations using computers. 

We mimic the definition of $\cP_{3}$ starting from $\mathbb{Z}^3$ instead of $\Lambda_3$.
Consider the set $X_3$ as defined in Subsection~\ref{s2.2}. Use \eqref{eq1} to define
the covering relation on $\mathbb{Z}^3$ and let $\prec'$ denote the partial order on 
$\mathbb{Z}^3$ induced by this covering relation. Our main observation here is the
following:

\begin{proposition}\label{prop2}
The relation $\prec$ coincides with the restriction of the relation $\prec'$ to $\Lambda_3$.
\end{proposition}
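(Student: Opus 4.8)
The plan is to establish the two inclusions $\prec\,\subseteq\,\prec'|_{\Lambda_3}$ and $\prec'|_{\Lambda_3}\,\subseteq\,\prec$ separately. The first is immediate: every covering step $\mathbf{v}\lessdot_{X_3}\mathbf{w}$ taking place inside $\Lambda_3$ is, verbatim, a covering step in $\mathbb{Z}^3$ (the defining equation $\mathbf{v}=\mathbf{w}+\mathbf{x}$ with $\mathbf{x}\in X_3$ is the same), so passing to transitive closures shows $\mathbf{v}\prec\mathbf{w}\Rightarrow\mathbf{v}\prec'\mathbf{w}$. For the reverse inclusion I would first record a reformulation of $\prec'$: since in $\mathbb{Z}^3$ there is no positivity obstruction to applying the generators in any order, $\mathbf{v}\prec'\mathbf{w}$ holds exactly when $\mathbf{v}-\mathbf{w}$ lies in $S\setminus\{\mathbf{0}\}$, where $S\subset\mathbb{Z}^3$ is the additive monoid generated by $X_3$; moreover, as each generator has height $-1$, every representation of $\mathbf{v}-\mathbf{w}$ as a sum of generators uses exactly $\mathrm{ht}(\mathbf{w})-\mathrm{ht}(\mathbf{v})$ of them. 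Throughout I write $\mathbf{a},\mathbf{b},\mathbf{c},\mathbf{d}$ for $(1,-2,0),(-2,1,0),(-1,-1,1),(0,0,-1)$.

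The reverse inclusion I would then prove by induction on the height gap $m=\mathrm{ht}(\mathbf{w})-\mathrm{ht}(\mathbf{v})$, after isolating the following one-step statement: if $\mathbf{v},\mathbf{w}\in\Lambda_3$ with $\mathbf{v}\prec'\mathbf{w}$, then there is $\mathbf{x}\in X_3$ with $\mathbf{w}+\mathbf{x}\in\Lambda_3$ and $\mathbf{v}-(\mathbf{w}+\mathbf{x})\in S$. Granting this, the point $\mathbf{w}':=\mathbf{w}+\mathbf{x}$ lies in $\Lambda_3$, satisfies $\mathbf{w}'\lessdot_{X_3}\mathbf{w}$ inside $\Lambda_3$, and still has $\mathbf{v}-\mathbf{w}'\in S$ with strictly smaller height gap; the induction hypothesis gives $\mathbf{v}=\mathbf{w}'$ or $\mathbf{v}\prec\mathbf{w}'$ in $\Lambda_3$, and composing with $\mathbf{w}'\lessdot_{X_3}\mathbf{w}$ yields $\mathbf{v}\prec\mathbf{w}$.

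Everything thus reduces to the one-step lemma, which carries the real content. Writing $\mathbf{v}-\mathbf{w}=p\mathbf{a}+q\mathbf{b}+r\mathbf{c}+s\mathbf{d}$ with $p,q,r,s\ge 0$ not all zero, the key algebraic fact is the single relation $\mathbf{a}+\mathbf{b}=\mathbf{c}+\mathbf{d}=(-1,-1,0)$, which lets me trade an $\mathbf{a}\mathbf{b}$-pair for a $\mathbf{c}\mathbf{d}$-pair. Applying it repeatedly I may assume $\min(p,q)=0$, and by the $S_2$-symmetry that swaps the first two coordinates together with $\mathbf{a}\leftrightarrow\mathbf{b}$ I may assume $q=0$. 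Then $\mathbf{v}=\mathbf{w}+(p-r,\,-2p-r,\,r-s)\ge\mathbf{0}$ yields $w_2\ge 2p+r$, $w_1+p\ge r$ and $w_3+r\ge s$, and these force an applicable generator that already occurs in the representation: if $p\ge 1$ then $w_2\ge 2$, so $\mathbf{a}$ is applicable; if $p=0$ and $r\ge 1$ then $w_1\ge r\ge 1$ and $w_2\ge r\ge 1$, so $\mathbf{c}$ is applicable; and if $p=r=0$ then $s\ge 1$ and $w_3\ge s\ge 1$, so $\mathbf{d}$ is applicable. In each case the chosen $\mathbf{x}$ has positive coefficient, whence $\mathbf{v}-(\mathbf{w}+\mathbf{x})\in S$ and $\mathbf{w}+\mathbf{x}\in\Lambda_3$, proving the lemma.

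The point I would flag as the main obstacle is that one \emph{cannot} simply fix a multiset of moves realizing $\mathbf{v}\prec'\mathbf{w}$ and reorder it to stay inside $\Lambda_3$: the multiset is not an invariant, precisely because of the relation above. For instance $\mathbf{v}=(0,0,0)\prec'(1,1,0)=\mathbf{w}$ is realized by $\{\mathbf{a},\mathbf{b}\}$, yet neither $\mathbf{a}$ (which needs $w_2\ge 2$) nor $\mathbf{b}$ (which needs $w_1\ge 2$) is applicable at $(1,1,0)$; one must re-express the same difference as $\mathbf{c}+\mathbf{d}$ and descend via $(1,1,0)\succ(0,0,1)\succ(0,0,0)$. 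Getting the bookkeeping right therefore hinges on first normalising the representation through $\mathbf{a}+\mathbf{b}=\mathbf{c}+\mathbf{d}$, and on the observation that it is the non-negativity of the \emph{lower} endpoint $\mathbf{v}$, rather than of $\mathbf{w}$, that guarantees enough room at $\mathbf{w}$ to peel off an applicable generator.
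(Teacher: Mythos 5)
Your proof is correct, and it takes a genuinely different route from the paper's. The paper argues by contradiction from a counterexample $(\mathbf{v},\mathbf{w})$ minimal with respect to the height gap: it first shows that every $\mathbf{v}-\mathbf{x}_i$ lies outside $\Lambda_3$, uses this to exclude $(0,0,-1)$ and then $(-1,-1,1)$ from any witnessing sequence, disposes of the case where all remaining generators coincide directly, and only at the very last step invokes the syzygy $(-2,1,0)+(1,-2,0)=(-1,-1,1)+(0,0,-1)$ to trade a mixed horizontal pair and contradict minimality. You instead put the syzygy first: you normalise the monoid representation of $\mathbf{v}-\mathbf{w}$ so that $(1,-2,0)$ and $(-2,1,0)$ never coexist (invoking the $S_2$ symmetry, which the paper's proof replaces here by a ``the other case is similar''), and then prove a one-step peeling lemma --- an applicable generator always exists at the \emph{upper} endpoint $\mathbf{w}$ precisely because the \emph{lower} endpoint $\mathbf{v}$ is componentwise non-negative --- from which the proposition follows by a clean induction on the gap. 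Structurally the two arguments are dual: the paper ascends from $\mathbf{v}$ (asking which $\mathbf{v}-\mathbf{x}_i$ stay in $\Lambda_3$) and derives contradictions, while you descend from $\mathbf{w}$ and construct an explicit saturated chain inside $\Lambda_3$. Your version is arguably the more transparent one: it is constructive, it isolates the only genuine obstruction (the relation $\mathbf{a}+\mathbf{b}=\mathbf{c}+\mathbf{d}$, which your example $(0,0,0)\prec'(1,1,0)$ correctly shows defeats any naive reordering of a fixed multiset of moves), and the three-case analysis ($p\geq 1$; $p=0$, $r\geq 1$; $p=r=0$, $s\geq 1$) checks immediately against the inequalities $w_2\geq 2p+r$, $w_1+p\geq r$, $w_3+r\geq s$. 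What the paper's formulation buys in exchange is avoiding the normal-form bookkeeping on representations, at the cost of a less direct exclusion-by-contradiction argument; both proofs are of comparable length and both pivot on the same single relation among the generators.
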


\begin{proof}
Let $\underline{\prec'}$ denote the restriction of the relation $\prec'$ to $\Lambda_3$.
Clearly, $\prec\subset\underline{\prec'}$, so we only need to show that 
$\underline{\prec'}\subset\prec$.

Let $\mathbf{v},\mathbf{w}\in \Lambda_3$ be such that $\mathbf{v}\prec'\mathbf{w}$. We have
to show that $\mathbf{v}\prec\mathbf{w}$. Assume that this is not the case and that
the pair $(\mathbf{v},\mathbf{w})$ satisfying $\mathbf{v}\prec'\mathbf{w}$ and
$\mathbf{v}\not \prec\mathbf{w}$ is chosen such that 
$\mathrm{ht}(\mathbf{w}-\mathbf{v})=k\in\mathbb{N}$ is minimal possible. As $\mathbf{v}\prec'\mathbf{w}$, there 
is a sequence of elements
$\mathbf{x}_1,\mathbf{x}_2,\dots,\mathbf{x}_k\in X_3$ such that 
\begin{displaymath}
\mathbf{v}=\mathbf{w}+ \mathbf{x}_1+\mathbf{x}_2+\dots+\mathbf{x}_k.
\end{displaymath}
Consider $\mathbf{v}_i=\mathbf{v}-\mathbf{x}_i$ for $i=1,2,\dots,k$. We claim that
all $\mathbf{v}_i\not\in \Lambda_3$. Indeed, if $\mathbf{v}_i\in \Lambda_3$, then
we would have $\mathbf{v}\prec\mathbf{v}_i$ and $\mathbf{v}_i\prec'\mathbf{w}$.
This would imply $\mathbf{v}_i\not \prec\mathbf{w}$
which would contradict our minimal choice of $k$. In particular, 
none of the $\mathbf{x}_i$'s equals $(0,0,-1)$ since $\mathbf{v}-(0,0,-1)\in \Lambda_3$
because $\mathbf{v}\in \Lambda_3$.

The next step is to show that none of the $\mathbf{x}_i$'s equals $(-1,-1,1)$.
Otherwise, without loss of generality we may assume that $\mathbf{x}_k=(-1,-1,1)$. Then we
have $\mathbf{v}_k\not\in \Lambda_3$ and hence 
$\mathbf{v}=(*,*,0)$ and $\mathbf{v}_k=(*,*,-1)$. Furthermore, we have
\begin{displaymath}
(*,*,-1)=\mathbf{v}_k=\mathbf{w}+ \mathbf{x}_1+\mathbf{x}_2+\dots+\mathbf{x}_{k-1}.
\end{displaymath}
Since $\mathbf{w}\in \Lambda_3$, the third coordinate in $\mathbf{w}$ is non-negative.
This means that at least one of the $\mathbf{x}_i$'s must have negative third coordinate.
The only element in $X_3$ with negative third coordinate is $(0,0,-1)$. However, in the
previous paragraph we already established that none of the $\mathbf{x}_i$'s equals $(0,0,-1)$,
a contradiction.
 
Therefore each  $\mathbf{x}_i$ is equal to either $(-2,1,0)$ or $(1,-2,0)$. 
Assume that all $\mathbf{x}_i$ are equal, say to $(-2,1,0)$ (the case of $(1,-2,0)$
is similar). Then $\mathbf{v}=\mathbf{w}+k(-2,1,0)$. Since both $\mathbf{v}$ and 
$\mathbf{w}$ are in $\Lambda_3$, we have $\mathbf{w}+i(-2,1,0)\in\Lambda_3$ for all $i$ such that
$1\leq i\leq k$. Therefore $\mathbf{v}\prec \mathbf{w}$, a contradiction.

The last paragraph establishes that at least one of the $\mathbf{x}_i$'s equals
$(-2,1,0)$ and at least one equals $(1,-2,0)$. This implies
$\mathbf{v}-(-2,1,0)-(1,-2,0)=\mathbf{v}+(1,1,0)\prec' \mathbf{w}$. At the same time, we have
\begin{displaymath}
\mathbf{v}+(0,0,1),\mathbf{v}+(1,1,0)\in \Lambda_3 
\end{displaymath}
as $\mathbf{v}\in \Lambda_3$ and 
\begin{displaymath}
\mathbf{v}\prec \mathbf{v}+(0,0,1)\prec \mathbf{v}+(0,0,1)+(1,1,-1)=\mathbf{v}+(1,1,0).
\end{displaymath}
This implies $\mathbf{v}+(1,1,0)\not \prec \mathbf{w}$ which again contradicts our
minimal choice of $k$. The claim follows.
\end{proof}

\subsection{Small values}\label{s3.3}

The table of $C_{n,h}^{(3)}$ for small values of $n$ is given in Figure~\ref{fig2}
(computed first by hands, up to $n=15$, and then checked and extended using 
Proposition~\ref{prop2} and MAPLE). Please ignore the underlines and the overlines for the moment.

\begin{figure}
{\fontsize{3pt}{0.5pt}
\begin{displaymath}
\begin{array}{c|cccccccccccccccccccccccccc}
C_{n,h}^{(3)}&1&1&2&4&5&7&11&13&17&23&27&33&42&48&57&69&78&90&106&118&134&154&170&190&215&235\\ 
\hline
25&&&&&&&&&&&&&&&&&&&&&&&&&&1\\
24&&&&&&&&&&&&&&&&&&&&&&&&&1&1\\
23&&&&&&&&&&&&&&&&&&&&&&&&1&1&2\\
22&&&&&&&&&&&&&&&&&&&&&&&1&1&2&3\\
21&&&&&&&&&&&&&&&&&&&&&&1&1&2&3&4\\
20&&&&&&&&&&&&&&&&&&&&&1&1&2&3&4&5\\
19&&&&&&&&&&&&&&&&&&&&1&1&2&3&4&5&7\\
18&&&&&&&&&&&&&&&&&&&1&1&2&3&4&5&7&8\\
17&&&&&&&&&&&&&&&&&&1&1&2&3&4&5&7&8&10\\
16&&&&&&&&&&&&&&&&&1&1&2&3&4&5&7&8&10&12\\
15&&&&&&&&&&&&&&&&1&1&2&3&4&5&7&8&10&12&14\\
14&&&&&&&&&&&&&&&1&1&2&3&4&5&7&8&10&12&14&16\\
13&&&&&&&&&&&&&&1&1&2&3&4&5&7&8&10&12&14&16&\underline{19}\\
12&&&&&&&&&&&&&1&1&2&3&4&5&7&8&10&12&14&\underline{16}&\underline{19}&20\\
11&&&&&&&&&&&&1&1&2&3&4&5&7&8&10&12&\underline{14}&\underline{16}&18&19&21\\
10&&&&&&&&&&&1&1&2&3&4&5&7&8&10&\underline{12}&\underline{14}&15&17&18&19&20\\
9&&&&&&&&&&1&1&2&3&4&5&7&8&\underline{10}&\underline{12}&13&14&16&16&17&18&\overline{18}\\
8&&&&&&&&&1&1&2&3&4&5&7&\underline{8}&\underline{10}&11&12&13&14&14&
\overline{15}&\overline{15}&\overline{15}&15\\
7&&&&&&&&1&1&2&3&4&5&\underline{7}&\underline{8}&9&10&11&11&\overline{12}&
\overline{12}&\overline{12}&12&12&12&12\\
6&&&&&&&1&1&2&3&4&\underline{5}&\underline{7}&7&8&9&\overline{9}&
\overline{9}&\overline{{10}}&9&9&{10}&9&9&{10}&9\\
5&&&&&&1&1&2&3&\underline{4}&\underline{5}&6&6&\overline{7}&\overline{7}&
\overline{7}&7&7&7&7&7&7&7&7&7&7\\
4&&&&&1&1&2&\underline{3}&\underline{4}&4&\overline{5}&\overline{5}&
\overline{5}&5&5&5&5&5&5&5&5&5&5&5&5&5\\
3&&&&1&1&\underline{2}&\underline{3}&\overline{3}&\overline{3}&
\overline{{4}}&3&3&{4}&3&3&{4}
&3&3&{4}&3&3&{4}&3&3&{4}&3\\
2&&&1&\underline{1}&\overline{\underline{2}}&\overline{2}&
\overline{2}&2&2&2&2&2&2&2&2&2&2&2&2&2&2&2&2&2&2&2\\
1&&\overline{\underline{1}}&\overline{\underline{1}}&\overline{1}
&1&1&1&1&1&1&1&1&1&1&1&1&1&1&1&1&1&1&1&1&1&1\\
0&{1}&&&{1}&&&{1}&&&{1}&&&
{1}&&&{1}&&&{1}&&&{1}&&&{1}&\\
\hline
h/n&0&1&2&3&4&5&6&7&8&9&10&11&12&13&14&15&16&17&18&19&20&21&22&23&24&25\\
\end{array}
\end{displaymath}
}
\caption{Values of $C_{n,h}^{(3)}$ for $n\leq 25$}\label{fig2}
\end{figure}

\subsection{Values of $C_{n,h}^{(3)}$ for large $h$}\label{s3.4}

The sequence $A001399(n)$ in \cite{OEIS} lists the number of partitions of $n$ into at most 3 parts.
Here are the first $25$ elements in this
sequence:
\begin{displaymath}
1, 1, 2, 3, 4, 5, 7, 8, 10, 12, 14, 16, 19, 21, 24,\dots
\end{displaymath}
Comparison with columns of Figure~\ref{fig2} suggests that the upper part of each
column in Figure~\ref{fig2} bounded by the underlined element 
is an initial segment of $A001399$. Indeed, we have the following claim:

\begin{proposition}\label{prop5}
For $h\geq \lceil\frac{n}{2}\rceil$, we have $C_{n,h}^{(3)}=A001399(n-h)$.
\end{proposition}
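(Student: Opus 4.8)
The plan is to convert the order-theoretic reachability question into a purely arithmetic one. First I would invoke Proposition~\ref{prop2} to replace the relation $\prec$ on $\Lambda_3$ by the ambient relation $\prec'$ on $\mathbb{Z}^3$: for endpoints $(a,b,c),(n,0,0)\in\Lambda_3$ we have $(a,b,c)\prec (n,0,0)$ if and only if the difference $(n,0,0)-(a,b,c)$ lies in the additive monoid generated by $-X_3=\{(-1,2,0),(2,-1,0),(1,1,-1),(0,0,1)\}$, since no intermediate positivity constraint survives once one works in $\mathbb{Z}^3$. Thus $(a,b,c)\in\Lambda_3^{(h)}\cap I(n\mathbf{e}(1))$ precisely when there exist non-negative integers $p,q,r,s$ with
\[
(n-a,-b,-c)=p(-1,2,0)+q(2,-1,0)+r(1,1,-1)+s(0,0,1).
\]

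The second step is to solve this linear Diophantine system. Writing $m:=n-h$ for the common height drop, the third and second coordinates force $r=c+s$ and $q=b+c+2p+s$, which are automatically non-negative, while substitution into the first coordinate (using $a+b+c=h$) collapses the remaining information to the single relation $m=3p+3s+b+2c$. Consequently $(a,b,c)$ is reachable exactly when $3(p+s)=m-b-2c$ is solvable in non-negative integers, i.e. precisely when
\[
b+2c\le m,\qquad b+2c\equiv m \pmod 3,\qquad b+c\le h,
\]
where the final inequality is just $a\ge 0$. Hence $C_{n,h}^{(3)}$ equals the number of pairs $(b,c)\in\mathbb{Z}_{\ge 0}^2$ meeting all three conditions; one checks against Figure~\ref{fig2} that this three-condition count reproduces the table below the line $h=\lceil\frac{n}{2}\rceil$ as well.

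The final step specialises to $h\ge\lceil\frac{n}{2}\rceil$, which is exactly the inequality $m\le h$. In this range $b+c\le b+2c\le m\le h$, so the constraint $b+c\le h$ is redundant and the count runs over all $(b,c)\ge 0$ with $b+2c\le m$ and $b+2c\equiv m\pmod 3$. I would then note that $(b,c)\mapsto\bigl(b,\,c,\,(m-b-2c)/3\bigr)$ is a bijection onto the triples $(x_1,x_2,x_3)\in\mathbb{Z}_{\ge 0}^3$ with $x_1+2x_2+3x_3=m$, which enumerate partitions of $m$ into parts of size at most $3$, equivalently (by conjugation) partitions of $m$ into at most $3$ parts. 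This gives $C_{n,h}^{(3)}=A001399(m)=A001399(n-h)$, as claimed. The step deserving the most care is the solution of the Diophantine system: one must confirm that the auxiliary variables $q,r$ are forced non-negative for free and that passing through $\mathbb{Z}^3$ via Proposition~\ref{prop2} introduces no spurious reachable points with negative coordinates, the latter being governed precisely by the inequality $b+c\le h$ that becomes vacuous in the asserted range.
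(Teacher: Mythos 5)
Your proof is correct, but it takes a genuinely different route from the paper's. The paper argues in two halves: it exhibits the explicit injection $(a,b,c)\mapsto (n,0,0)+a(-2,1,0)+b(-1,-1,1)+c(0,0,-1)$ from partitions of $n-h$ into at most three parts (injectivity via linear independence of these three generators, membership in $I(n\mathbf{e}(1))$ via Proposition~\ref{prop2} and the inequalities $2a+b\leq n$, $b\geq c$), and then proves surjectivity by an exchange argument: choosing a counterexample with $h$ maximal, reducing to the case $d=1$ of the fourth generator $(1,-2,0)$, and eliminating it through the syzygy $(-2,1,0)+(1,-2,0)=(-1,-1,1)+(0,0,-1)$. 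You instead use Proposition~\ref{prop2} more aggressively: since in $\mathbb{Z}^3$ no intermediate positivity constraints survive, $\prec'$-reachability is exactly membership of the difference vector in the monoid generated by $X_3$, and you solve the resulting $3\times 4$ linear Diophantine system completely. Your computation is right: the third and second coordinates force $r=c+s$ and $q=b+c+2p+s\geq 0$ for free, and with $m=n-h$ the first coordinate collapses to $m=3(p+s)+b+2c$, so that membership is equivalent to $b+2c\leq m$, $b+2c\equiv m\pmod 3$, $b+c\leq h$; the equivalence $m\leq h\Leftrightarrow h\geq\lceil n/2\rceil$ then kills the third condition, and your bijection $(b,c)\mapsto(b,c,(m-b-2c)/3)$ onto solutions of $x_1+2x_2+3x_3=m$ identifies the count with partitions of $m$ into parts of size at most $3$, conjugate to $A001399(m)$. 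What each approach buys: the paper's version makes the partition $(a,b,c)$ with $a\geq b\geq c$ appear directly in the coefficients, whereas yours yields a characterization of $\Lambda_3^{(h)}\cap I(n\mathbf{e}(1))$ valid for \emph{all} $h$ --- in particular your three-condition lattice-point count already describes $C_{n,h}^{(3)}$ in the intermediate range $\lceil\frac{n}{3}\rceil<h<\lceil\frac{n}{2}\rceil$ of Problem~\ref{problem1}, which the paper's exchange argument does not address. Two small points worth making explicit in a final write-up: the all-zero solution $p=q=r=s=0$ corresponds to $\mathbf{v}=(n,0,0)$ itself, which belongs to $I(n\mathbf{e}(1))$ by the definition of principal ideal (the transitive closure $\prec$ alone requires at least one step), so the case $h=n$ is covered; and the appeal to Figure~\ref{fig2} is a sanity check rather than part of the argument and can be dropped.
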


\begin{proof}
Let $(a,b,c)$ be a partition of $n-h$ in at most three parts, that is 
$a,b,c\in\mathbb{Z}_{\geq 0}$, $a\geq b\geq c$ and $a+b+c=n-h$. Then we claim that
\begin{displaymath}
\mathbf{v}_{(a,b,c)}:=(n,0,0)+a(-2,1,0)+b(-1,-1,1)+c(0,0,-1)\prec (n,0,0). 
\end{displaymath}
By Proposition~\ref{prop2}, it is enough to show that 
$\mathbf{v}_{(a,b,c)}\in \Lambda_3$. The latter however
follows from $2a+b\leq n$ (thanks to $h\geq \lceil\frac{n}{2}\rceil$)
and $b\geq c$ (thanks to the fact that $(a,b,c)$ is a partition).

The vectors $(-2,1,0)$, $(-1,-1,1)$ and $(0,0,-1)$ are linearly independent, which
implies that $\mathbf{v}_{(a,b,c)}\neq \mathbf{v}_{(a',b',c')}$ provided that 
$(a,b,c)\neq (a',b',c')$. Therefore $C_{n,h}^{(3)}\geq A001399(n-h)$.

Now consider some $v\in I(n\mathbf{e}(1))$ with height $h$. Then
\begin{equation}\label{eq2prime}
v = (n, 0, 0) + a(-2, 1, 0) + b(-1, -1, 1) + c(0, 0, -1) + d(1,-2, 0), 
\end{equation}
for some $a,b,c,d\in\mathbb{Z}_{\geq 0}$. Since the second coordinate of $v$ is nonnegative,
we have $a\geq d$. Since
\begin{displaymath}
(1, -2, 0) = -(-2, 1, 0)+(-1, -1, 1) + (0, 0, -1) ,
\end{displaymath}
we have
\begin{displaymath}
v = (n, 0, 0) + (a-d)(-2, 1, 0) + (b+d)(-1, -1, 1) + (c+d)(0, 0, -1)
\end{displaymath}
and thus may assume that $d=0$ in \eqref{eq2prime}.
We have $a+b+c=n-h$ since $v$ has height $h$,
$b\geq c$ as the third coordinate of $v$ is non-negative and
$a\geq b$ as the second coordinate of $v$ is non-negative.
Therefore $v=\mathbf{v}_{(a,b,c)}$, for the
partition $(a, b, c)$ of $n-h$. The claim of the
proposition follows.
\end{proof}

\subsection{Values of $C_{n,h}^{(3)}$ for small $h$}\label{s3.5}

We start this subsection with the following observation:

\begin{proposition}\label{prop7}
For $h\leq \lceil\frac{n}{3}\rceil$, we have $\Lambda_3^{(h)}\cap I(n\mathbf{e}(1))=
\Lambda_3^{(h)}\cap \Lambda_{3,k}$, where $k\in\{1,2,3\}$ is such that 
$n\equiv k(\mathrm{mod}\, 3)$. 
\end{proposition}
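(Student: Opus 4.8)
The plan is to deduce the claimed equality of cardinalities from a single set inclusion. Since $n\mathbf{e}(1)=(n,0,0)$ lies in $\Lambda_{3,k}$ and, by Proposition~\ref{prop1}\eqref{prop1.1}, the poset $\cP_{3}$ is a disjoint union of the subposets $\Lambda_{3,k}$, every principal ideal is contained in the component of its generator; hence $\Lambda_3^{(h)}\cap I(n\mathbf{e}(1))\subseteq \Lambda_3^{(h)}\cap\Lambda_{3,k}$ holds unconditionally. As both sets are finite, it suffices to prove the reverse inclusion, namely that every $\mathbf{v}=(v_1,v_2,v_3)\in\Lambda_{3,k}$ with $\mathrm{ht}(\mathbf{v})=h\leq\lceil\frac{n}{3}\rceil$ already belongs to $I(n\mathbf{e}(1))$.

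By Proposition~\ref{prop2}, membership $\mathbf{v}\in I(n\mathbf{e}(1))$ is equivalent to $\mathbf{v}=(n,0,0)$ or $\mathbf{v}\prec'(n,0,0)$, and the latter simply asks that $\mathbf{v}-(n,0,0)$ be a non-negative integer combination of the four vectors of $X_3$. So the heart of the argument is to exhibit non-negative integers $\alpha,\beta,\gamma,\delta$ with
\begin{displaymath}
\mathbf{v}=(n,0,0)+\alpha(1,-2,0)+\beta(-2,1,0)+\gamma(-1,-1,1)+\delta(0,0,-1).
\end{displaymath}
I would try the cheap choice $\gamma=v_3$, $\delta=0$, which already matches the third coordinate, and then solve the remaining $2\times 2$ system in $\alpha,\beta$. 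This yields $\alpha=\frac{n-w}{3}$ and $\beta=2\alpha+v_2+v_3$, where $w:=v_1+2v_2+3v_3$; direct substitution confirms these values reproduce $\mathbf{v}$. The point of recording $\beta$ in the form $2\alpha+v_2+v_3$ is that $\beta\geq 0$ follows automatically once $\alpha\geq 0$, so that $\gamma,\delta,\beta$ are all non-negative and the only substantive requirement is $\alpha\geq 0$, i.e. $w\leq n$. Integrality of $\alpha$ (equivalently of $\beta$) is precisely the congruence $w\equiv n\pmod 3$, which holds because $\mathbf{v}$ and $(n,0,0)$ lie in the same component $\Lambda_{3,k}$.

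It then remains to verify $w\leq n$ under the hypothesis $h\leq\lceil\frac{n}{3}\rceil$. Here $w=v_1+2v_2+3v_3\leq 3(v_1+v_2+v_3)=3h\leq 3\lceil\frac{n}{3}\rceil$, and since $3\lceil\frac{n}{3}\rceil$ exceeds $n$ by at most $2$, a short case analysis on $n\bmod 3$ together with $w\equiv n\pmod 3$ forces $w\leq n$: in each residue class the largest integer congruent to $n$ modulo $3$ that does not exceed $3\lceil\frac{n}{3}\rceil$ is $n$ itself.

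I expect the main (though mild) obstacle to be this last, number-theoretic step. The crude bound $w\leq 3h$ only gives $w\leq 3\lceil\frac{n}{3}\rceil$, which can be as large as $n+2$, so the congruence coming from membership in $\Lambda_{3,k}$ is genuinely needed to close the gap; this is exactly where the hypothesis that $\mathbf{v}$ sits in the same component as $n\mathbf{e}(1)$ is used in an essential way. The other delicate point is producing the correct coefficients $\alpha,\beta,\gamma,\delta$, but once the form $\beta=2\alpha+v_2+v_3$ is found the non-negativity collapses to the single inequality $w\leq n$, and everything else is routine verification.
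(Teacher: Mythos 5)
Your proof is correct, and while it shares the paper's basic mechanism, its execution is genuinely different and arguably cleaner. Both arguments get the inclusion $\Lambda_3^{(h)}\cap I(n\mathbf{e}(1))\subseteq \Lambda_3^{(h)}\cap\Lambda_{3,k}$ for free from Proposition~\ref{prop1}\eqref{prop1.1}, and both establish the reverse inclusion by exhibiting $\mathbf{v}-(n,0,0)$ as a non-negative integral combination of elements of $X_3$. But the paper first reduces to the extreme case $h=\lceil\frac{n}{3}\rceil$ (implicitly climbing lower elements up via $(0,0,-1)\in X_3$), then runs three separate cases according to $k$, each built from the two-stage chain \eqref{eq3}--\eqref{eq4} with a case-specific divisibility ($3$ divides $2a+b$, $2a+b-1$ or $2a+b-2$, respectively) defining the auxiliary parameter $p$. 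You instead solve the linear system once, uniformly in $h$ and $k$: the choice $\gamma=v_3$, $\delta=0$, $\alpha=\frac{n-w}{3}$, $\beta=2\alpha+v_2+v_3$ with $w=v_1+2v_2+3v_3$ checks out, integrality of $\alpha$ is exactly the single congruence $w\equiv n\pmod 3$ expressing common membership in $\Lambda_{3,k}$ (this subsumes the paper's three divisibility conditions), and non-negativity collapses to $w\leq n$, which your estimate $w\leq 3h\leq 3\lceil\frac{n}{3}\rceil\leq n+2$ together with the congruence correctly forces. Two points of comparison are worth noting. First, your argument leans on Proposition~\ref{prop2} more essentially than the paper's proof of this proposition does: your one-shot decomposition can pass through points outside $\Lambda_3$ (e.g.\ applying $(1,-2,0)$ to $(n,0,0)$ first), whereas the paper's chains \eqref{eq3} and \eqref{eq4} happen to stay inside $\Lambda_3$; your usage is nonetheless exactly how the paper deploys Proposition~\ref{prop2} in the proof of Proposition~\ref{prop5}, so it is entirely legitimate. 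Second, your decomposition uses the generator $(1,-2,0)$ and avoids $(0,0,-1)$, while the paper's does the opposite; the two normal forms differ by multiples of the relation $(1,-2,0)+(-2,1,0)=(-1,-1,1)+(0,0,-1)$, so both are valid. What your route buys is the elimination of both the reduction step and the threefold case split; what the paper's buys is explicit poset chains through concrete intermediate elements of $\Lambda_3$, reused verbatim across the cases.
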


\begin{proof}
As $I(n\mathbf{e}(1))\subset \Lambda_{3,k}$, for our choice of $k$, to prove the assertion of
this proposition we only need to show that
$(\Lambda_3^{(h)}\cap \Lambda_{3,k})\subset (\Lambda_3^{(h)}\cap I(n\mathbf{e}(1)))$.
If $v\in (\Lambda_3^{(h)}\cap \Lambda_{3,k})\setminus (\Lambda_3^{(h)}\cap I(n\mathbf{e}(1)))$,
for some $h$, 
then $v+(0,0,1)\in (\Lambda_3^{(h+1)}\cap \Lambda_{3,k})\setminus (\Lambda_3^{(h+1)}\cap I(n\mathbf{e}(1)))$
since $I(n\mathbf{e}(1)))$ is an ideal.
Therefore it is enough to prove the proposition for $h=\lceil\frac{n}{3}\rceil$ which we from now
on assume. Set $q:=\lfloor\frac{n}{3}\rfloor$. We will have to consider three different cases
depending on $k$.

{\bf Case~1: $k=3$.} In this case $q=h=\frac{n}{3}$.
Let $(a,b,c)\in \Lambda_3^{(h)}\cap \Lambda_{3,k}$, that is $a,b,c\in\mathbb{Z}_{\geq 0}$,
$a+b+c=h$ and $3$ divides $a+2b$. In this case we have
\begin{equation}\label{eq3}
(n-3c-2b,b,c)=(n,0,0)+(b+c)(-2,1,0)+c(-1,-1,1)\prec (n,0,0).
\end{equation}
Now, $n-3c-2b=3a+b$. Since $3$ divides both $a+2b$ and $3a+3b$, it also divides $2a+b$.
Therefore  there is $p\in\mathbb{Z}_{\geq 0}$ such that $2a+b=3p$. We have
\begin{multline}\label{eq4}
(a,b,c)=(n-3c-2b-3p,b,c)=\\=(n-3c-2b,b,c)+p(-2,1,0)+p(-1,-1,1)+p(0,0,-1)\prec (n-3c-2b,b,c).
\end{multline}
Combining \eqref{eq3} and \eqref{eq4} implies $(a,b,c)\prec (n,0,0)$ and 
hence $(a,b,c)\in I(n\mathbf{e}(1))$.

{\bf Case~2: $k=2$.} In this case $q=h-1$ and $n=3h-1$. 
Let $(a,b,c)\in \Lambda_3^{(h)}\cap \Lambda_{3,k}$, 
that is $a,b,c\in\mathbb{Z}_{\geq 0}$, $a+b+c=h$ and $3$ divides $a+2b-2$. 
Formula~\eqref{eq3} still holds in this case.
Now, $n-3c-2b=3a+b-1$. Since $3$ divides both $a+2b-2$ and $3a+3b$, it also divides $2a+b-1$.
Therefore  there is $p\in\mathbb{Z}_{\geq 0}$ such that $2a+b-1=3p$. 
Formula~\eqref{eq4} still holds in this case.
Again it follows that $(a,b,c)\in I(n\mathbf{e}(1))$.

{\bf Case~3: $k=1$.} In this case $q=h-1$ and $n=3h-2$. 
Let $(a,b,c)\in \Lambda_3^{(h)}\cap \Lambda_{3,k}$, 
that is $a,b,c\in\mathbb{Z}_{\geq 0}$, $a+b+c=h$ and $3$ divides $a+2b-1$. 
Formula~\eqref{eq3} still holds in this case.
Now, $n-3c-2b=3a+b-2$. Since $3$ divides both $a+2b-1$ and $3a+3b$, it also divides $2a+b-2$.
Therefore  there is $p\in\mathbb{Z}_{\geq 0}$ such that $2a+b-2=3p$. 
Formula~\eqref{eq4} still holds in this case and we similarly obtain $(a,b,c)\in I(n\mathbf{e}(1))$.
\end{proof}

\begin{lemma}\label{lem8}
{\hspace{1mm}}

\begin{enumerate}[$($i$)$]
\item\label{lem8.1} If $3$ does not divide $h$, then $|\Lambda_{3,1}^{(h)}|=|\Lambda_{3,2}^{(h)}|=
|\Lambda_{3,3}^{(h)}|$.
\item\label{lem8.2} If $3$ divides $h$, then $|\Lambda_{3,1}^{(h)}|=|\Lambda_{3,2}^{(h)}|=
|\Lambda_{3,3}^{(h)}|-1$.
\end{enumerate}
\end{lemma}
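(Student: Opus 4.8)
The plan is to count lattice points of fixed height $h$ in each residue class $\Lambda_{3,k}$ directly. Recall that $\Lambda_3^{(h)}$ consists of all $(a,b,c)\in\mathbb{Z}_{\geq 0}^3$ with $a+b+c=h$, and that $\Lambda_{3,k}$ is determined by the congruence condition $a+2b+3c\equiv k\pmod 3$. Since $3c\equiv 0$, this is equivalent to $a+2b\equiv k\pmod 3$, and using $a+2b=(a+b+c)+(b-a-c)\equiv h+(b-a-c)$, or more simply just working with $a-b\equiv -(a+2b)\pmod 3$, the class of a point depends only on the residue of $a-b$ modulo $3$. So the task reduces to counting, among the $\frac{(h+1)(h+2)}{2}$ points of $\Lambda_3^{(h)}$, how many fall into each of the three residue classes of $a-b\bmod 3$.

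First I would set up the generating-function or direct-bijection count. One clean approach is to use the $S_3$-action permuting the three coordinates: the full symmetric group $S_3$ acts on $\Lambda_3^{(h)}$, and I would track how this action permutes the residue classes. The cyclic permutation $(a,b,c)\mapsto(c,a,b)$ sends $a-b\mapsto c-a$; one checks that the three quantities $a-b$, $b-c$, $c-a$ sum to $0$ and are cyclically permuted, so the cyclic group $C_3\subset S_3$ permutes the three residue classes $\{a-b\equiv 0,1,2\}$ transitively \emph{except} on the fixed points of the $C_3$-action. The fixed points are exactly the points with $a=b=c$, which exist in $\Lambda_3^{(h)}$ if and only if $3\mid h$, in which case there is exactly one such point, namely $(h/3,h/3,h/3)$, and it lies in the class $a-b\equiv 0$, i.e. in $\Lambda_{3,3}$.

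This immediately gives the result by orbit-counting. If $3\nmid h$, the $C_3$-action on $\Lambda_3^{(h)}$ is free, every orbit has size $3$ and meets each residue class exactly once, so the three classes have equal cardinality, proving \eqref{lem8.1}. If $3\mid h$, there is exactly one fixed point lying in $\Lambda_{3,3}$ and all remaining points lie in free orbits of size $3$ that are equidistributed among the three classes; hence $|\Lambda_{3,1}^{(h)}|=|\Lambda_{3,2}^{(h)}|$ and $|\Lambda_{3,3}^{(h)}|$ exceeds each of them by the single fixed point, giving $|\Lambda_{3,1}^{(h)}|=|\Lambda_{3,2}^{(h)}|=|\Lambda_{3,3}^{(h)}|-1$, which is \eqref{lem8.2}.

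The only genuinely delicate step is verifying that each free $C_3$-orbit meets all three residue classes, i.e. that the cyclic shift acts as a $3$-cycle on the labels $a-b\bmod 3$ rather than fixing them; this is where I would be most careful. The point is that fixing a label under the shift $(a,b,c)\mapsto(c,a,b)$ forces $a-b\equiv c-a\equiv b-c\pmod 3$, and combined with $(a-b)+(b-c)+(c-a)=0$ this forces $3(a-b)\equiv 0$ together with all three differences equal mod $3$, which after a short check forces $a\equiv b\equiv c\pmod 3$; on a free orbit (where the three coordinates are not all equal) I must rule out this degenerate congruence, which requires noting that equal residues mod $3$ do not force equality of the integers. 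I would therefore instead verify the $3$-cycle claim by the cleaner observation that $(c-a)-(a-b)=b+c-2a\equiv b+c+a=h\pmod 3$ is a nonzero constant whenever $3\nmid h$, so the three labels are genuinely distinct and cyclically shifted; the case $3\mid h$ needs the separate fixed-point analysis above. Once this distributional fact is pinned down, combining it with \eqref{eq5} for the total count finishes the proof.
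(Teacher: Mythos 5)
Your part (i) is correct, and in fact it is a slicker route than the paper's (the paper arranges $\Lambda_3^{(h)}$ in a triangular array and inducts on $h$, tracking residues of $a+2b$ along successive diagonals). Your key computation --- that the label $\ell=a-b\bmod 3$ shifts by $h\bmod 3$ under the cyclic permutation $(a,b,c)\mapsto(c,a,b)$ --- is exactly right, and when $3\nmid h$ it makes every $C_3$-orbit a transversal of the three classes, so (i) follows cleanly.

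But part (ii) has a genuine gap, and your own computation exposes it. Since the label difference along an orbit is $\equiv h\pmod 3$, when $3\mid h$ it is $0$, so every $C_3$-orbit lies \emph{entirely inside a single class} $\Lambda_{3,k}^{(h)}$ --- the opposite of your claim that the free orbits are ``equidistributed among the three classes.'' Concretely, for $h=3$ the orbit $\{(3,0,0),(0,3,0),(0,0,3)\}$ sits wholly in $\Lambda_{3,3}^{(3)}$, the orbit of $(2,1,0)$ wholly in $\Lambda_{3,1}^{(3)}$, and the orbit of $(1,2,0)$ wholly in $\Lambda_{3,2}^{(3)}$. Nor can the rest of $S_3$ rescue the argument: when $3\mid h$ one has $c\equiv -a-b \pmod 3$, and a short check (using $2\equiv -1$) shows that every transposition sends $\ell$ to $-\ell$ while every $3$-cycle fixes $\ell$, so the whole $S_3$-action stabilizes $\Lambda_{3,3}^{(h)}$ setwise and at best swaps $\Lambda_{3,1}^{(h)}\leftrightarrow\Lambda_{3,2}^{(h)}$. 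Symmetry thus yields $|\Lambda_{3,1}^{(h)}|=|\Lambda_{3,2}^{(h)}|$ (valid for all $h$, via $(a,b,c)\mapsto(b,a,c)$), but it says nothing about how $|\Lambda_{3,3}^{(h)}|$ compares to the other two, which is precisely the content of (ii); your ``separate fixed-point analysis'' only accounts for the single point $(h/3,h/3,h/3)$, not for why the free orbits split equally among the classes. To close the gap you need an independent input: either the paper's induction adding one diagonal at a time, or a roots-of-unity count, e.g.\ with $\omega=e^{2\pi i/3}$ the character sum $S_h=\sum_{a+b+c=h}\omega^{a+2b}$ has generating function $\frac{1}{(1-t)(1-\omega t)(1-\omega^2 t)}=\frac{1}{1-t^3}$, so $S_h=1$ if $3\mid h$ and $S_h=0$ otherwise, and substituting into $|\Lambda_{3,k}^{(h)}|=\frac{1}{3}\sum_{j=0}^{2}\omega^{-jk}\sum_{a+b+c=h}\omega^{j(a+2b)}$ gives both (i) and (ii) in one stroke.
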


\begin{proof}
We prove both statements at the same time  by induction on $h$. 
Let us arrange elements of $\Lambda_{3}^{(h)}$ in a triangular array as shown on 
Figure~\ref{fig3}.

\begin{figure}
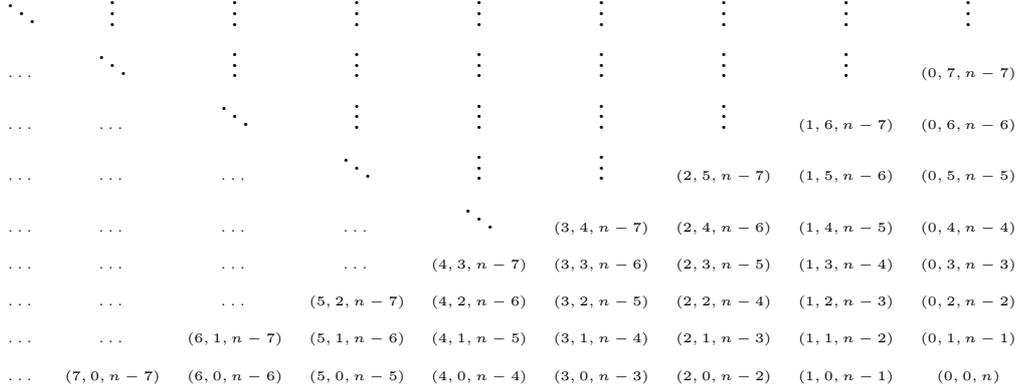

{\fontsize{5pt}{1pt}
\begin{displaymath}
\begin{array}{ccccccccc}
&&&&&&&&(0,h,0)\\
&&&&&&&\iddots&\vdots\\ 
&&&&&&(h-6,6,0)&\dots&(0,6,h-6)\\ 
&&&&&(h-5,5,0)&\dots&(1,5,h-6)&(0,5,h-5)\\ 
&&&&(h-4,4,0)&\dots&(2,4,h-6)&(1,4,h-5)&(0,4,h-4)\\ 
&&&(h-3,3,0)&\dots&(3,3,h-6)&(2,3,h-5)&(1,3,h-4)&(0,3,h-3)\\ 
&&(h-2,2,0)&\dots&(4,2,h-6)&(3,2,h-5)&(2,2,h-4)&(1,2,h-3)&(0,2,h-2)\\ 
&(h-1,1,0)&\dots&(5,1,h-6)&(4,1,h-5)&(3,1,h-4)&(2,1,h-3)&(1,1,h-2)&(0,1,h-1)\\ 
(h,0,0)&\dots&(6,0,h-6)&(5,0,h-5)&(4,0,h-4)&(3,0,h-3)&(2,0,h-2)&(1,0,h-1)&(0,0,h)\\ 
\end{array}
\end{displaymath}
}
\caption{Triangular arrangement of $\Lambda_{3}^{(h)}$}\label{fig3} 
\end{figure}

Writing down the residue modulo $3$ of the expression $a+2b$ for each element $(a,b,c)$
in Figure~\ref{fig3} we get
\begin{displaymath}
\begin{array}{ccccccccc}
&&&&&&&&\dots\\
&&&&&&&\dots&2\\ 
&&&&&&\dots&1&0\\ 
&&&&&\dots&0&2&1\\ 
&&&&\dots&2&1&0&2\\ 
&&&\dots&1&0&2&1&0\\ 
&&\dots&0&2&1&0&2&1\\ 
&\dots&2&1&0&2&1&0&2\\ 
\dots&1&0&2&1&0&2&1&0\\ 
\end{array}
\end{displaymath}
For a fixed $h$, the set $\Lambda_3^{(h)}$ corresponds to the first $h+1$ 
``bottom-left-to-top-right'' diagonals starting from the bottom right corner.
The induction step $h\to h+1$ corresponds to adding the next diagonal.

By a direct inspection of the above tables, we have  
\begin{gather*}
0=|\Lambda_{3,1}^{(0)}|=|\Lambda_{3,2}^{(0)}|=
|\Lambda_{3,3}^{(0)}|-1,\\ 
1=|\Lambda_{3,1}^{(1)}|=|\Lambda_{3,2}^{(1)}|=
|\Lambda_{3,3}^{(1)}|,\\ 
2=|\Lambda_{3,1}^{(2)}|=|\Lambda_{3,2}^{(2)}|=
|\Lambda_{3,3}^{(2)}|,
\end{gather*}
which establishes the basis of our induction.

Note that the residues in each diagonal follow a cyclic order on $0,1,2$
(as one step up along the diagonal decreases the first coordinate by $1$
and increases the second coordinate by $1$, thus changing $a+2b$ to $(a-1)+2(b+1)$). 
In particular, if the number of elements on a new diagonal is divisible by $3$, it 
contains the same number of $0$'s, $1$'s and $2$'s. This proves the induction 
step in the case when $3$ divides $h-1$.

If $3$ divides $h-2$, then the new diagonal contains an extra zero compared to the 
common number of $1$'s and $2$'s. If $3$ divides $h$, then the new diagonal 
contains one zero less than the  common number of $1$'s and $2$'s.
Put together this implies the  induction step and completes the proof of the proposition. 
\end{proof}

For a set $X$, we denote by $\delta_{X}$ the indicator function of $X$, that is
\begin{displaymath}
\delta_{X}(x)=\begin{cases}1,& x\in X;\\0,& x\not\in X.\end{cases} 
\end{displaymath}

\begin{corollary}\label{cor9}
For $h\leq \lceil\frac{n}{3}\rceil$ we have
\begin{displaymath}
C_{n,h}^{(3)}= \frac{(h+1)(h+2)+(6\delta_{3\mathbb{Z}}(n) -2)\delta_{3\mathbb{Z}}(h)}{6}.
\end{displaymath}
\end{corollary}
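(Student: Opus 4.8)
The plan is to reduce the count $C_{n,h}^{(3)}$ for small $h$ to the sizes of the three layers $\Lambda_{3,k}^{(h)}$, and then to pin down those sizes using the additive constraint supplied by \eqref{eq5} together with the comparison supplied by Lemma~\ref{lem8}. All the genuine work has already been done in Proposition~\ref{prop7} and Lemma~\ref{lem8}; what remains is bookkeeping.

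First I would invoke Proposition~\ref{prop7}. Under the hypothesis $h\leq\lceil\frac{n}{3}\rceil$ it yields $C_{n,h}^{(3)}=|\Lambda_3^{(h)}\cap I(n\mathbf{e}(1))|=|\Lambda_{3,k}^{(h)}|$, where $k\in\{1,2,3\}$ is the unique residue with $n\mathbf{e}(1)=(n,0,0)\in\Lambda_{3,k}$. Since $v_1+2v_2+3v_3=n$ for this vector, $k$ is determined by $n\equiv k\pmod 3$, so $k=3$ exactly when $3\mid n$ and $k\in\{1,2\}$ otherwise. Thus the whole problem becomes: compute the three layer sizes $|\Lambda_{3,k}^{(h)}|$ and read off the one selected by $n$.

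Next, by Proposition~\ref{prop1}\eqref{prop1.1} the layer $\Lambda_3^{(h)}$ is the disjoint union of $\Lambda_{3,1}^{(h)}$, $\Lambda_{3,2}^{(h)}$ and $\Lambda_{3,3}^{(h)}$, whence
\[
|\Lambda_{3,1}^{(h)}|+|\Lambda_{3,2}^{(h)}|+|\Lambda_{3,3}^{(h)}|=|\Lambda_3^{(h)}|=\frac{(h+1)(h+2)}{2}
\]
by \eqref{eq5}. I would then split according to $\delta_{3\mathbb{Z}}(h)$ and use Lemma~\ref{lem8} to solve this single linear relation. If $3\nmid h$, the three sizes are equal by Lemma~\ref{lem8}\eqref{lem8.1}, so each equals $\frac{(h+1)(h+2)}{6}$, independently of $k$. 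If $3\mid h$, then Lemma~\ref{lem8}\eqref{lem8.2} gives $|\Lambda_{3,1}^{(h)}|=|\Lambda_{3,2}^{(h)}|=|\Lambda_{3,3}^{(h)}|-1$; writing $x$ for the common smaller value, the relation becomes $3x+1=\frac{(h+1)(h+2)}{2}$, so $x=\frac{(h+1)(h+2)-2}{6}$ for $k\in\{1,2\}$ and $x+1=\frac{(h+1)(h+2)+4}{6}$ for $k=3$.

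Finally I would assemble these into the single displayed formula and verify that the indicator expression reproduces each case. When $\delta_{3\mathbb{Z}}(h)=0$ the bracketed term vanishes and the formula returns $\frac{(h+1)(h+2)}{6}$, matching the first case. When $\delta_{3\mathbb{Z}}(h)=1$ and $3\nmid n$ (so $k\in\{1,2\}$ and $\delta_{3\mathbb{Z}}(n)=0$) the numerator is $(h+1)(h+2)-2$, matching $x$; when $\delta_{3\mathbb{Z}}(h)=1$ and $3\mid n$ (so $k=3$ and $\delta_{3\mathbb{Z}}(n)=1$) the numerator is $(h+1)(h+2)+4$, matching $x+1$. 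These exhaust all cases, so the corollary follows. There is no real obstacle here; the only care needed is in correctly correlating the residue $k$ selected by $n$ with the two values distinguished by Lemma~\ref{lem8}, and in confirming that the compact indicator-function numerator matches each of the three resulting subcases.
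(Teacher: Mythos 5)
Your proposal is correct and is precisely the argument the paper intends: the paper's entire ``proof'' of Corollary~\ref{cor9} is the sentence ``Combining Proposition~\ref{prop7}, Lemma~\ref{lem8} and Formula~\eqref{eq5}, we obtain,'' and you have supplied exactly that combination --- reducing $C_{n,h}^{(3)}$ to $\lvert\Lambda_{3,k}^{(h)}\rvert$ via Proposition~\ref{prop7}, solving for the layer sizes from the partition $\lvert\Lambda_{3,1}^{(h)}\rvert+\lvert\Lambda_{3,2}^{(h)}\rvert+\lvert\Lambda_{3,3}^{(h)}\rvert=\frac{(h+1)(h+2)}{2}$ together with Lemma~\ref{lem8}, and checking the indicator formula case by case. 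The case analysis, including the correlation of $k=3$ with $3\mid n$, is carried out correctly.
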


\begin{proof}
We have $C_{n,h}^{(3)}=|I(n\mathbf{e}(1))\cap \Lambda_3^{(h)}|$ by definition and
$|I(n\mathbf{e}(1))\cap \Lambda_3^{(h)}|=|\Lambda_3^{(h)}\cap \Lambda_{3,k}|$ by
Proposition~\ref{prop7}. Further, $|\Lambda_3^{(h)}\cap \Lambda_{3,k}|=|\Lambda_{3,k}^{(h)}|$
again by definition. Now the claim follows
applying Lemma~\ref{lem8} and Formula~\eqref{eq5} and
keeping $|\Lambda_{3}^{(h)}|=|\Lambda_{3,1}^{(h)}|+|\Lambda_{3,2}^{(h)}|+|\Lambda_{3,3}^{(h)}|$
in mind. 
\end{proof}

In the case when $3$ does not divide $n$, the sequence
$\frac{(h+1)(h+2)-2\delta_{3\mathbb{Z}}(h)}{6}$ is $A001840(h)$ from \cite{OEIS}.
In the case when $3$ divides $n$, the sequence
$\frac{(h+1)(h+2)+4\delta_{3\mathbb{Z}}(h)}{6}$ is $A007997(h+2)$ from \cite{OEIS}.
However, it seems that our interpretation of both these sequences does not appear
on \cite{OEIS} at the moment. We note that the sequence $A001840(h+1)-A001840(h)$
is the sequence
\begin{displaymath}
1,1,1,2,2,2,3,3,3,4,4,4,5,5,5,\dots,
\end{displaymath}
while the sequence $A007997(h+3)-A007997(h+2)$ is the sequence
\begin{displaymath}
1,0,1,2,1,2,3,2,3,4,3,4,5,4,5,6,5,6,\dots.
\end{displaymath}
The latter sequence should be compared with the fourth sequence which will be constructed
in Subsection~\ref{s3.6} below.

Each column in Figure~\ref{fig2} contains a unique overlined element. This element
corresponds to the upper bound $\lceil\frac{n}{3}\rceil$ for the value of $h$ for
which $C_{n,h}^{(3)}$ is given by Corollary~\ref{cor9}. In other words, this element 
and all elements below it in the same column are given by an initial segment of 
$A007997(h+2)$ or $A001840(h)$,  if $3$ does or does not divide $n$, respectively.

\begin{problem}\label{problem1}
Find a closed formula for $C_{n,h}^{(3)}$, where $\lceil\frac{n}{3}\rceil<h<\lceil\frac{n}{2}\rceil$.
\end{problem}

We do not know how hard this problem is, we do not see how to approach it in full generality.

\subsection{Sequence $A028289$}\label{s3.6}

The sequence $A028289$ in \cite{OEIS} lists coefficients in the expansion of 
$\frac{1+t^2+t^3+t^5}{(1-t)(1-t^3)(1-t^4)(1-t^6)}$. Here are the first $25$ elements in this
sequence:
\begin{displaymath}
1, 1, 2, 4, 5, 7, 11, 13, 17, 23, 27, 33, 42, 48, 57, 69, 78, 90, 106, 118, 134, 154, 170, 190, 215, 235,\dots 
\end{displaymath}
This sequence appears in \cite{CBC}. Comparison with the first row of 
Figure~\ref{fig2} suggests that $C_{n}^{(3)}=A028289(n)$ for all $n$. We will prove this in the next subsection.
In this subsection we propose two constructions of $A028289$, alternative to its definition on
\cite{OEIS}. The first construction consists of five combinatorial steps.

\begin{itemize}
\item Consider first the sequence $0,1,2,3,4,5\dots$ of all non-negative integers.
\item Construct the second sequence $0,1,1,2,2,3,3,4,4,5,5,\dots$  by repeating all
non-zero terms in the previous sequence twice.
\item Define the third sequence as the sequence of partial sums of the second sequence:
$0,1,2,4,6,9,12,16,20,25,30,\dots$.
\item Construction of the fourth sequence is the most complicated one. The sequence is:
\begin{displaymath}
1,0,1,2,1,2,4,2,4,6,4,6,9,6,9,12,9,12,16,12,16,20,16,20,\dots
\end{displaymath}
and this is obtained by adding appropriately shifted five-term frames of the form $(i,0,i,0,i)$,
where $i$ an element of the third sequence, as shown here:
\begin{displaymath}
\begin{array}{rcccccccccccccccccc}
&1&0&1&0&1\\
&&&&2&0&2&0&2\\
&&&&&&&4&0&4&0&4\\
&&&&&&&&&&6&0&6&0&6\\
&&&&&&&&&&&&&\dots&\dots&\dots\\
\hline
\text{add:}&1&0&1&2&1&2&4&2&4&6&4&6&\dots&\dots&\dots
\end{array}
\end{displaymath}
\item The final, fifth, sequence is the sequence of partial sums of the fourth sequence:
{\fontsize{10pt}{1pt}
\begin{displaymath}
1,1,2,4,5,7,11,13,17,23,27,33,42,48,57,69,78,90,106,118,134,154,170,190,\dots
\end{displaymath}
}
\end{itemize}

\begin{proposition}\label{prop4}
The fifth sequence constructed above coincides with  $A028289$.
\end{proposition}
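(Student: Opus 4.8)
The plan is to attach to each of the five sequences its ordinary generating function and to track how the four transformations used in the construction (doubling the nonzero terms, two partial-sum operations, and the frame interlacing) act on generating functions; the statement then collapses to a single rational-function identity, which is routine to verify. Throughout I write $D(t)$ for the generating function of the third sequence and $F(t)$ for that of the fourth.

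First I would record the three elementary translations. The first sequence $0,1,2,3,\dots$ has generating function $\frac{t}{(1-t)^2}$. Doubling every nonzero term of it while retaining the single leading zero produces the second sequence $0,1,1,2,2,\dots$, whose generating function is $\frac{t}{(1-t)(1-t^2)}$ (checked directly, e.g.\ by summing $\sum_{k\geq 1}k(t^{2k-1}+t^{2k})$). Passing to partial sums multiplies the generating function by $\frac{1}{1-t}$, so $D(t)=\frac{t}{(1-t)^2(1-t^2)}$, and, likewise, the fifth sequence has generating function $\frac{1}{1-t}F(t)$. Thus everything except the fourth sequence is immediate.

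The crux, and the step I expect to be the main obstacle, is the fourth sequence, whose defining rule is the least transparent. I would first distill from the frame construction an explicit description of the fourth sequence $\{f_n\}$ in terms of the third sequence $\{s_m\}$: each frame $s_j,\ast,s_j,\ast,s_j$ deposits the value $s_j$ at three positions spaced two apart, and consecutive frames are offset by three so that every star is filled by a neighbouring frame's value. Reading this off residue class by residue class modulo $3$ yields $f_{3m}=s_{m+1}$, $f_{3m+1}=s_m$ and $f_{3m+2}=s_{m+1}$ for all $m\geq 0$, in agreement with the explicitly listed terms. Summing the three corresponding series and using $\sum_m s_{m+1}u^m=u^{-1}D(u)$ (valid since $s_0=0$) and $\sum_m s_m u^m=D(u)$ with $u=t^3$, the three residue classes contribute $t^{-3}D(t^3)$, $t\,D(t^3)$ and $t^{-1}D(t^3)$, so that
\begin{displaymath}
F(t)=(t^{-3}+t^{-1}+t)\,D(t^3)=\frac{1+t^2+t^4}{(1-t^3)^2(1-t^6)}.
\end{displaymath}

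Finally, multiplying by $\frac{1}{1-t}$ gives $\frac{1+t^2+t^4}{(1-t)(1-t^3)^2(1-t^6)}$ for the fifth sequence, and it remains to identify this with the defining generating function $\frac{1+t^2+t^3+t^5}{(1-t)(1-t^3)(1-t^4)(1-t^6)}$ of $A028289$. Cross-multiplying and cancelling the common factor $(1-t)(1-t^3)(1-t^6)$ reduces this to the polynomial identity $(1+t^2+t^4)(1-t^4)=(1+t^2+t^3+t^5)(1-t^3)$, both sides expanding to $1+t^2-t^6-t^8$; this check finishes the argument. The only genuinely delicate point is justifying the residue-class description of the fourth sequence directly from the interlacing rule, after which the entire claim is a formal manipulation of rational functions.
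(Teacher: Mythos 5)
Your proof is correct and takes essentially the same approach as the paper's: translate each of the five construction steps into generating functions, handle the interlacing step by the residue-class decomposition modulo $3$ (your $F(t)=(t^{-3}+t^{-1}+t)D(t^3)$ is literally the paper's $\frac{g(t^3)}{t^3}+\frac{t^2g(t^3)}{t^3}+\frac{t^4g(t^3)}{t^3}$), and finish with the same polynomial identity $(1+t^2+t^4)(1-t^4)=1+t^2-t^6-t^8=(1+t^2+t^3+t^5)(1-t^3)$. Your intermediate generating functions differ from the paper's only cosmetically (e.g.\ $\frac{t}{(1-t)(1-t^2)}$ versus $\frac{t+t^2}{(1-t^2)^2}$ for the second sequence, which agree), and your acknowledged delicate point, justifying $f_{3m}=s_{m+1}$, $f_{3m+1}=s_m$, $f_{3m+2}=s_{m+1}$ from the frame rule, is treated with no greater rigour in the paper itself.
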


\begin{proof}
Let us compute the generating function of all sequences constructed above. 
For the first sequence the generating function is
\begin{displaymath}
f(t):=\frac{t}{(1-t)^2}. 
\end{displaymath}
For the second sequence we get
\begin{displaymath}
f(t^2)+\frac{1}{t}f(t^2)=\frac{t+t^2}{(1-t^2)^2}.
\end{displaymath}
Convolution with $1,1,1,\dots$, that is the sequence with generating function $\frac{1}{1-t}$, 
implies that the generating function for the third sequence is
\begin{displaymath}
g(t)=\frac{t+t^2}{(1-t)(1-t^2)^2}. 
\end{displaymath}
The generating function for the fourth sequence is
\begin{displaymath}
\frac{g(t^3)}{t^3}+\frac{t^2g(t^3)}{t^3}+\frac{t^4g(t^3)}{t^3}= 
\frac{(1+t^3)(1+t^2+t^4)}{(1-t^6)^2(1-t^3)}= 
\frac{1+t^2+t^4}{(1-t^6)(1-t^3)^2}.
\end{displaymath}
Finally, yet another convolution with $1,1,1,\dots$ gives the generating function
\begin{displaymath}
\frac{1+t^2+t^4}{(1-t)(1-t^6)(1-t^3)^2}
\end{displaymath}
for the fifth sequence. The latter generating function coincides with  the generating function
\begin{displaymath}
\frac{1+t^2+t^3+t^5}{(1-t)(1-t^3)(1-t^4)(1-t^6)}
\end{displaymath}
of $A028289$ since
\begin{displaymath}
(1+t^2+t^4)(1-t^4)=1+t^2-t^6-t^8=(1+t^2+t^3+t^5)(1-t^3).
\end{displaymath}
The claim follows.
\end{proof}

Our second construction of $A028289$ (which is relevant for Theorem~\ref{thm33} in the following subsection)
uses the following observation:

\begin{lemma}\label{lem4-1}
We have
\begin{displaymath}
\frac{1+t^2+t^3+t^5}{(1-t)(1-t^3)(1-t^4)(1-t^6)}=
\frac{1}{1-t}\cdot\frac{1}{1-t^3}\cdot (1+t^2+t^3+t^4+t^5+t^7)\cdot
\frac{1}{(1-t^6)^2}.
\end{displaymath}
\end{lemma}

\begin{proof}
We have to check that 
\begin{displaymath}
\frac{1+t^2+t^3+t^5}{1-t^4}=
\frac{1+t^2+t^3+t^4+t^5+t^7}{1-t^6}.
\end{displaymath}
This is a straightforward computation.
\end{proof}

Lemma~\ref{lem4-1} implies that $A028289$ can be constructed in the following four combinatorial steps.

\begin{itemize}
\item Consider first the sequence $1,2,3,4,5\dots$ of all positive integers.
\item Construct the second sequence $1,0,1,1,1,1,2,1,2,2,2,2,3,2,\dots$  by repeating the
pattern $i,*,i,i,i,i,*,i$ of the terms in the previous sequence using shift in six positions.
\item Construct the third sequence $1,0,1,2,1,2,4,2,4,\dots$ by convolution of the second sequence with 
$1,0,0,1,0,0,1,0,0,1,\dots$.
\item The final, fourth, sequence is the sequence of partial sums of the third sequence:
{\fontsize{10pt}{1pt}
\begin{displaymath}
1,1,2,4,5,7,11,13,17,23,27,33,42,48,57,69,78,90,106,118,134,154,170,190,\dots
\end{displaymath}
}
\end{itemize}

\subsection{Computation of $C_n^{(3)}$}\label{s3.7}
Here we prove our first main result.

\begin{theorem}\label{thm33}
We have $C_{n}^{(3)}=A028289(n)$ for all $n\in\mathbb{Z}_{\geq0}$. 
\end{theorem}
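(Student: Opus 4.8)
The plan is to prove $C_n^{(3)} = A028289(n)$ by showing that the sequence $C_n^{(3)}$ has the same generating function as $A028289$, using the partial results already established for large and small $h$ to pin down the contributions to each column of Figure~\ref{fig2}. Recall that $C_n^{(3)} = \sum_{h=0}^{n} C_{n,h}^{(3)}$, and we have closed forms for the two ``ends'' of each column: Proposition~\ref{prop5} handles $h \geq \lceil\frac{n}{2}\rceil$ via $A001399(n-h)$, and Corollary~\ref{cor9} handles $h \leq \lceil\frac{n}{3}\rceil$. The obstacle flagged in Problem~\ref{problem1} is the middle range $\lceil\frac{n}{3}\rceil < h < \lceil\frac{n}{2}\rceil$, so the central difficulty is controlling $C_{n,h}^{(3)}$ there without a closed formula for the individual entries.

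My approach would be to bypass the middle range by working with the \emph{total} $C_n^{(3)}$ rather than its individual summands, exploiting the four-step combinatorial construction recorded after Lemma~\ref{lem4-1} (or equivalently the five-step construction of Proposition~\ref{prop4}). The strategy is to identify $C_n^{(3)}$ directly with the fifth (resp.\ fourth) sequence constructed there, whose generating function was already shown to equal that of $A028289$. Concretely, I would first establish that the sequence of \emph{first differences} $D_n := C_n^{(3)} - C_{n-1}^{(3)}$ coincides with the fourth sequence $1,0,1,2,1,2,4,2,4,6,\dots$ of Subsection~\ref{s3.6}; since $C_n^{(3)}$ is the sequence of partial sums of the $D_n$, matching the differences suffices. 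The first difference $D_n$ counts the elements of $\cP_{3,k}$ that lie in $I(n\mathbf{e}(1))$ but not in $I((n-1)\mathbf{e}(1))$ (after accounting for the index shift $n \mapsto n-1$ changing the residue class $k$), so I would give a bijective or direct count of this ``new layer.''

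The key steps, in order, would be: (1) fix the residue class $k$ with $n\mathbf{e}(1)\in\Lambda_{3,k}$ and describe $I(n\mathbf{e}(1))$ explicitly using Proposition~\ref{prop2}, so that membership is decided in $\mathbb{Z}^3$ by the reachability relation $\prec'$ rather than in $\Lambda_3$; (2) use the parametrization from the proof of Proposition~\ref{prop5}, writing elements reachable from $(n,0,0)$ as $(n,0,0) + a(-2,1,0) + b(-1,-1,1) + c(0,0,-1) + d(1,-2,0)$ with the relation $(-2,1,0)+(1,-2,0) = (-1,-1,1)+(0,0,-1)$ from that proof allowing normalization of the coefficients; (3) compute $D_n = |I(n\mathbf{e}(1))| - |I((n-1)\mathbf{e}(1))|$ by comparing these parametrized descriptions for consecutive $n$, exploiting that $n$ and $n-1$ occupy different residue classes $\Lambda_{3,k}$; (4) verify that the resulting count of the new layer matches the explicit fourth-sequence pattern $i,*,i,*,i$ with $i$ ranging over the third sequence (the partial sums of $0,1,1,2,2,3,3,\dots$); and (5) invoke Proposition~\ref{prop4} together with Lemma~\ref{lem4-1} to conclude that the partial sums give exactly $A028289(n)$.

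The main obstacle I anticipate is step (3)--(4): obtaining the irregular pattern of the fourth sequence (with its interlaced $*$-gaps and the shift by six positions) from the layer-counting. The difficulty is that the reachability set $I(n\mathbf{e}(1))$ is not simply a rescaled simplex; the relation $\prec'$ restricted via Proposition~\ref{prop2} imposes nontrivial inequalities ($2a+b\leq n$ and $b\geq c$ type constraints together with the residue condition $3\mid 2a+b$ or its shifts from the proofs of Proposition~\ref{prop7} and Corollary~\ref{cor9}), and the number of lattice points satisfying them modulo $3$ is exactly what produces the period-six and period-three behavior visible in the $(1-t^3)^2$ and $(1-t^6)$ factors of the generating function. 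I would expect to handle this by splitting into residue cases $n \equiv 0,1,\dots,5 \pmod 6$ and matching each against the construction, with the triangular-array bookkeeping of Lemma~\ref{lem8} as the model for counting residues along diagonals. Alternatively, if a clean layer-bijection proves elusive, I would fall back on summing the known closed forms over the two end-ranges and bounding/computing the middle range contribution directly, but the generating-function route via first differences is the cleaner path and is the one the authors have evidently set up through Subsection~\ref{s3.6}.
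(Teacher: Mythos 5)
Your proposal is essentially the paper's own proof: the paper likewise reduces Theorem~\ref{thm33} to showing that the first differences $g_n = C_n^{(3)} - C_{n-1}^{(3)}$ realize the sequence with generating function $\frac{1}{1-t^3}\cdot\frac{1+t^2+t^3+t^4+t^5+t^7}{(1-t^6)^2}$ (your ``fourth sequence''), and then concludes by Lemma~\ref{lem4-1} and the generating-function computation of Proposition~\ref{prop4}. The mechanism you flag as the open difficulty in your steps (3)--(4) is exactly what the paper's Lemmata~\ref{lemma33-1}--\ref{lemma33-3} supply: the injection $\mathbf{v}\mapsto\mathbf{v}+(1,0,0)$ identifies the ``new layer'' with the slice $\{\mathbf{v}\in I(n\mathbf{e}(1)) : v_1=0\}$, and the projection $\Phi$ along $(-3,0,0)=(-2,1,0)+(-1,-1,1)+(0,0,-1)$ maps that slice bijectively onto $T_n\cup T_{n-3}\cup T_{n-6}\cup\cdots$, whose planar cone-slice counts $f_i$ satisfy $f_{i+6}=f_i+1$ and hence produce the period-six factor you anticipated, with no case split modulo $6$ needed.
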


\begin{proof}
Taking Lemma~\ref{lem4-1} into account, to prove the assertion of our theorem, it is enough to show that 
\begin{equation}\label{eqnn21}
\sum_{n\geq 0} C_{n}^{(3)}t^n=\frac{1}{1-t}\cdot\frac{1}{1-t^3}\cdot (1+t^2+t^3+t^4+t^5+t^7)\cdot
\frac{1}{(1-t^6)^2}.
\end{equation}

For a variable $n$, consider the sets
\begin{gather*}
\tilde{D}_n:=(n,0,0)+\mathbb{Z}(-2,1,0)+
\mathbb{Z}(-1,-1,1)+\mathbb{Z}(0,0,-1),\\
D_n:=(n,0,0)+\mathbb{Z}(-2,1,0)+\mathbb{Z}(-3,0,1).
\end{gather*}
For $v\in \tilde{D}_n$, we denote by $\Phi(v)=\Phi_n(v)$ the unique element in $D_n$ 
for which we have  $ \Phi(v)-v\in\mathbb{Z}(-3,0,0)$. This is well-defined as
the $\mathbb{Z}$-linear span of the linearly independent vectors
$(-2,1,0)$, $(-1,-1,0)$ and $(0,0,1)$ coincides with 
the $\mathbb{Z}$-linear span of the linearly independent vectors
$(-2,1,0)$, $(-3,0,1)$ and $(-3,0,0)$. 

Our first observation is the following:

\begin{lemma}\label{lemma33-1}
For any $\mathbf{v}\in I(n\mathbf{e}(1))$ we have
\begin{displaymath}
\Phi(\mathbf{v}) \in(n,0,0)+\mathbb{Z}_{\geq 0}(-2,1,0)+ \mathbb{Z}_{\geq 0}(-3,0,1). 
\end{displaymath} 
\end{lemma}

\begin{proof}
We have 
\begin{displaymath}
\mathbf{v}=(n,0,0)+a(-2,1,0)+b(1,-2,0)+c(-1,-1,1)+d(0,0,-1),
\end{displaymath}
for $a,b,c,d,\in \mathbb{Z}_{\geq 0}$, by definition. From 
$\mathbf{v}\in I(n\mathbf{e}(1))$, by looking at the second and the third coordinates of $\mathbf{v}$,
we obtain $a-2b-c\geq 0$ and $c-d\geq 0$. Now we rewrite
\begin{displaymath}
\mathbf{v}=(n,0,0)+(a-2b-c)(-2,1,0)+(c-d)(-3,0,1)+(b+d)(-3,0,0).
\end{displaymath}
As $(n,0,0)+(a-2b-c)(-2,1,0)+(c-d)(-3,0,1)\in D_n$, the claim of the lemma follows from the 
definition of $\Phi$.
\end{proof}

Motivated by Lemma~\ref{lemma33-1}, for $i\in\mathbb{Z}_{\geq 0}$, set $f_i:=|T_i|$, where
\begin{displaymath}
T_i:=\{\mathbf{v}\in (n,0,0)+\mathbb{Z}_{\geq 0}(-2,1,0)+ \mathbb{Z}_{\geq 0}(-3,0,1)\,:\,
\mathbf{v}=(n-i,*,*)\}. 
\end{displaymath}

\begin{lemma}\label{lemma33-2}
We have
\begin{displaymath}
\sum_{i\geq 0}f_it^i=\frac{1+t^2+t^3+t^4+t^5+t^7}{(1-t^6)^2}. 
\end{displaymath} 
\end{lemma}

\begin{proof}
The proof is illustrated in Figure~\ref{fig7}. By definition, $f_i$ enumerates 
lattice points of the two-dimensional cone 
$\mathbf{C}:=(n,0,0)+\mathbb{Z}_{\geq 0}(-2,1,0)+ \mathbb{Z}_{\geq 0}(-3,0,1)$
(the points of $\mathbf{C}$ are depicted as bullet points in Figure~\ref{fig7}) 
belonging to the line, in the plane determined by the cone, given by the condition 
that the first coordinate of the point on the line equals $n-i$ (these lines are
depicted as dashed lines in Figure~\ref{fig7}).

A direct calculation gives the following values for small $i$:
\begin{equation}\label{eqnew91}
\begin{array}{c|cccccccc}
i&0&1&2&3&4&5&6&7\\
\hline
f_i&1&0&1&1&1&1&2&1
\end{array}
\end{equation}
For $i=6$, we, for the first time, have $f_i=2>1$. This implies that 
$f_i$ satisfies the recursion $f_{i+6}=f_{i}+1$, for all $i\geq 0$. 
Indeed, from Figure~\ref{fig7} we see that  $\mathbf{C}$ is the disjoint union 
of the shifted copy $\mathbf{C}+(-6,0,2)$ of $\mathbf{C}$ and the remaining strip of width $2$
going north-east in Figure~\ref{fig7}. The recursion follows by noting that, for $i\neq 1$, 
the corresponding dashed line intersects the remaining strip in exactly one point and that,
for $i\geq 6$, the intersection of the corresponding dashed line with $\mathbf{C}+(-6,0,2)$ has exactly
$f_{i-6}$ points, due to the observation above and the linearity of the definitions.

The claim of the lemma follows by combining the recursion $f_{i+6}=f_{i}+1$ with
the initial values listed in \eqref{eqnew91}.
\end{proof}

\begin{figure}
\special{em:linewidth 0.4pt} \unitlength 0.80mm
\begin{picture}(200.00,150.00)
\put(80.00,130.00){\makebox(0,0)[cc]{$\bullet$}}
\put(100.00,130.00){\makebox(0,0)[cc]{$\bullet$}}
\put(120.00,130.00){\makebox(0,0)[cc]{$\bullet$}}
\put(140.00,130.00){\makebox(0,0)[cc]{$\bullet$}}
\put(160.00,130.00){\makebox(0,0)[cc]{$\bullet$}}
\put(90.00,110.00){\makebox(0,0)[cc]{$\bullet$}}
\put(110.00,110.00){\makebox(0,0)[cc]{$\bullet$}}
\put(130.00,110.00){\makebox(0,0)[cc]{$\bullet$}}
\put(150.00,110.00){\makebox(0,0)[cc]{$\bullet$}}
\put(100.00,90.00){\makebox(0,0)[cc]{$\bullet$}}
\put(120.00,90.00){\makebox(0,0)[cc]{$\bullet$}}
\put(140.00,90.00){\makebox(0,0)[cc]{$\bullet$}}
\put(110.00,70.00){\makebox(0,0)[cc]{$\bullet$}}
\put(130.00,70.00){\makebox(0,0)[cc]{$\bullet$}}
\put(120.00,50.00){\makebox(0,0)[cc]{$\bullet$}}
\dashline(60.00,90.00)(170.00,134.00){1}
\dashline(60.00,82.00)(170.00,126.00){1}
\dashline(60.00,74.00)(170.00,118.00){1}
\dashline(60.00,66.00)(170.00,110.00){1}
\dashline(60.00,58.00)(170.00,102.00){1}
\dashline(60.00,50.00)(170.00,94.00){1}
\dashline(60.00,42.00)(170.00,86.00){1}
\dashline(60.00,34.00)(170.00,78.00){1}
\dashline(60.00,26.00)(170.00,70.00){1}
\linethickness{1pt}
\drawline(60.00,20.00)(60.00,110.00)
\drawline(50.00,20.00)(50.00,110.00)
\drawline(40.00,20.00)(40.00,110.00)
\drawline(30.00,20.00)(30.00,110.00)
\drawline(20.00,20.00)(20.00,110.00)
\linethickness{1pt}
\drawline(120.00,50.00)(160.00,130.00)
\drawline(110.00,70.00)(140.00,130.00)
\drawline(100.00,90.00)(120.00,130.00)
\drawline(90.00,110.00)(100.00,130.00)
\drawline(128.00,67.70)(130.00,70.00)
\drawline(129.45,66.95)(130.00,70.00)
\drawline(120.00,50.00)(80.00,130.00)
\drawline(130.00,70.00)(100.00,130.00)
\drawline(140.00,90.00)(120.00,130.00)
\drawline(150.00,110.00)(140.00,130.00)
\drawline(110.00,70.00)(130.00,70.00)
\drawline(100.00,90.00)(140.00,90.00)
\drawline(90.00,110.00)(150.00,110.00)
\drawline(80.00,130.00)(160.00,130.00)
\drawline(111.80,68.00)(110.00,70.00)
\drawline(110.50,67.30)(110.00,70.00)
\put(55.00,10.00){\makebox(0,0)[cc]{$i$}}
\put(45.00,10.00){\makebox(0,0)[cc]{$f_i$}}
\put(35.00,10.00){\makebox(0,0)[cc]{$g_i$}}
\put(25.00,10.00){\makebox(0,0)[cc]{$C_i^{(3)}$}}
\put(55.00,26.00){\makebox(0,0)[cc]{$0$}}
\put(45.00,26.00){\makebox(0,0)[cc]{$1$}}
\put(35.00,26.00){\makebox(0,0)[cc]{$1$}}
\put(25.00,26.00){\makebox(0,0)[cc]{$1$}}
\put(55.00,34.00){\makebox(0,0)[cc]{$1$}}
\put(45.00,34.00){\makebox(0,0)[cc]{$0$}}
\put(35.00,34.00){\makebox(0,0)[cc]{$0$}}
\put(25.00,34.00){\makebox(0,0)[cc]{$1$}}
\put(55.00,42.00){\makebox(0,0)[cc]{$2$}}
\put(45.00,42.00){\makebox(0,0)[cc]{$1$}}
\put(35.00,42.00){\makebox(0,0)[cc]{$1$}}
\put(25.00,42.00){\makebox(0,0)[cc]{$2$}}
\put(55.00,50.00){\makebox(0,0)[cc]{$3$}}
\put(45.00,50.00){\makebox(0,0)[cc]{$1$}}
\put(35.00,50.00){\makebox(0,0)[cc]{$2$}}
\put(25.00,50.00){\makebox(0,0)[cc]{$4$}}
\put(55.00,58.00){\makebox(0,0)[cc]{$4$}}
\put(45.00,58.00){\makebox(0,0)[cc]{$1$}}
\put(35.00,58.00){\makebox(0,0)[cc]{$1$}}
\put(25.00,58.00){\makebox(0,0)[cc]{$5$}}
\put(55.00,66.00){\makebox(0,0)[cc]{$5$}}
\put(45.00,66.00){\makebox(0,0)[cc]{$1$}}
\put(35.00,66.00){\makebox(0,0)[cc]{$2$}}
\put(25.00,66.00){\makebox(0,0)[cc]{$7$}}
\put(55.00,74.00){\makebox(0,0)[cc]{$6$}}
\put(45.00,74.00){\makebox(0,0)[cc]{$2$}}
\put(35.00,74.00){\makebox(0,0)[cc]{$4$}}
\put(25.00,74.00){\makebox(0,0)[cc]{$11$}}
\put(55.00,82.00){\makebox(0,0)[cc]{$7$}}
\put(45.00,82.00){\makebox(0,0)[cc]{$1$}}
\put(35.00,82.00){\makebox(0,0)[cc]{$2$}}
\put(25.00,82.00){\makebox(0,0)[cc]{$13$}}
\put(55.00,90.00){\makebox(0,0)[cc]{$8$}}
\put(45.00,90.00){\makebox(0,0)[cc]{$2$}}
\put(35.00,90.00){\makebox(0,0)[cc]{$4$}}
\put(25.00,90.00){\makebox(0,0)[cc]{$17$}}
\put(55.00,98.00){\makebox(0,0)[cc]{$\vdots$}}
\put(45.00,98.00){\makebox(0,0)[cc]{$\vdots$}}
\put(35.00,98.00){\makebox(0,0)[cc]{$\vdots$}}
\put(25.00,98.00){\makebox(0,0)[cc]{$\vdots$}}
\put(120.00,46.00){\makebox(0,0)[cc]{{\fontsize{4pt}{1pt}$(n,0,0)$}}}
\put(120.00,86.00){\makebox(0,0)[cc]{{\fontsize{4pt}{1pt}$(n-5,1,1)$}}}
\put(130.00,106.00){\makebox(0,0)[cc]{{\fontsize{4pt}{1pt}$(n-7,2,1)$}}}
\put(110.00,106.00){\makebox(0,0)[cc]{{\fontsize{4pt}{1pt}$(n-8,1,2)$}}}
\put(140.00,134.00){\makebox(0,0)[cc]{{\fontsize{4pt}{1pt}$(n-9,3,1)$}}}
\put(120.00,134.00){\makebox(0,0)[cc]{{\fontsize{4pt}{1pt}$(n-10,2,2)$}}}
\put(100.00,134.00){\makebox(0,0)[cc]{{\fontsize{4pt}{1pt}$(n-11,1,3)$}}}
\put(140.00,70.00){\makebox(0,0)[cc]{{\fontsize{4pt}{1pt}$(n-2,1,0)$}}}
\put(100.00,70.00){\makebox(0,0)[cc]{{\fontsize{4pt}{1pt}$(n-3,0,1)$}}}
\put(150.00,90.00){\makebox(0,0)[cc]{{\fontsize{4pt}{1pt}$(n-4,2,0)$}}}
\put(160.00,110.00){\makebox(0,0)[cc]{{\fontsize{4pt}{1pt}$(n-6,3,0)$}}}
\put(170.00,130.00){\makebox(0,0)[cc]{{\fontsize{4pt}{1pt}$(n-8,4,0)$}}}
\put(90.00,90.00){\makebox(0,0)[cc]{{\fontsize{4pt}{1pt}$(n-6,0,2)$}}}
\put(80.00,110.00){\makebox(0,0)[cc]{{\fontsize{4pt}{1pt}$(n-9,0,3)$}}}
\put(70.00,130.00){\makebox(0,0)[cc]{{\fontsize{4pt}{1pt}$(n-12,0,4)$}}}
\put(131.00,58.00){\makebox(0,0)[cc]{{\fontsize{4pt}{1pt}$(-2,1,0)$}}}
\put(109.00,58.00){\makebox(0,0)[cc]{{\fontsize{4pt}{1pt}$(-3,0,1)$}}}
\end{picture}
\caption{Geometric illustration of the proof of Lemma~\ref{lemma33-2} and Theorem~\ref{thm33}} \label{fig7}
\end{figure}
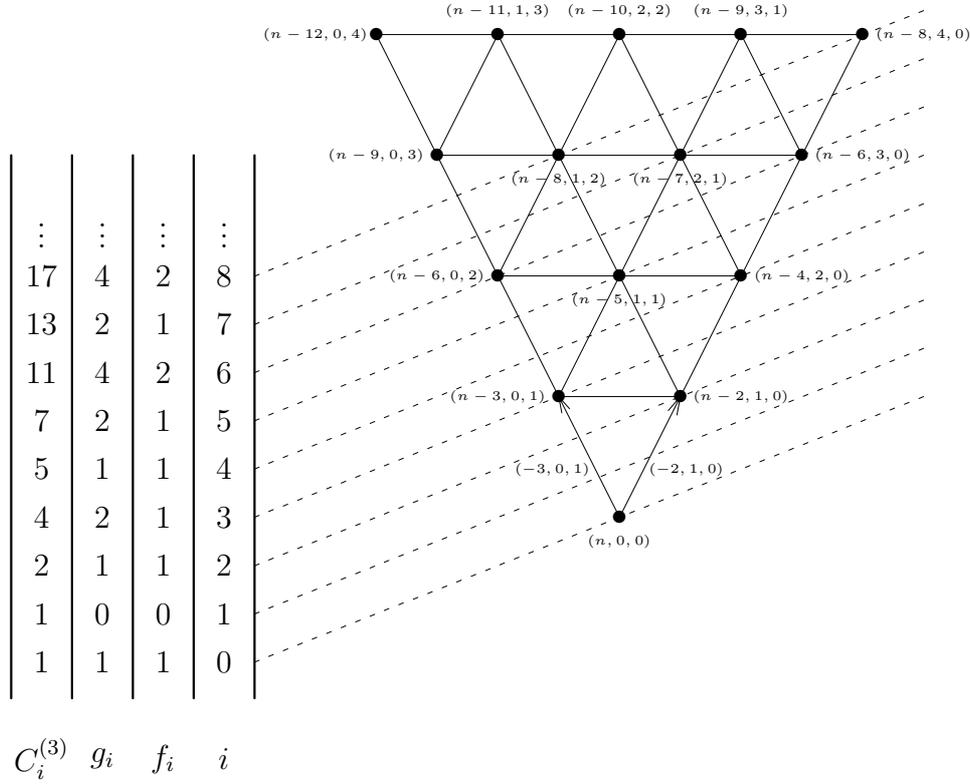

For each $n\in\mathbb{Z}_{\geq 0}$, mapping $\mathbf{v}\mapsto \mathbf{v}+(1,0,0)$ defines an injection
from $I((n-1)\mathbf{e}(1))$ to $I(n\mathbf{e}(1))$. Set $g_n:=C_{n}^{(3)}-C_{n-1}^{(3)}\geq 0$
(under the convention $C_{-1}^{(3)}=0$). Then we have
\begin{equation}\label{eqnn25}
C_{n}^{(3)}=g_n+g_{n-1}+\dots+g_0 
\end{equation}
by construction. This reduces the claim of the theorem to the following crucial observation:

\begin{lemma}\label{lemma33-3}
We have  $g_n=f_n+f_{n-3}+f_{n-6}+\dots$, for all $n$, where we assume $f_k=0$, for $k<0$.
\end{lemma}

\begin{proof}
First of all, we claim  that the map 
\begin{displaymath}
\begin{array}{rcl}
I((n-1)\mathbf{e}(1))&\to &I(n\mathbf{e}(1))\\
\mathbf{v}&\mapsto &\mathbf{v}+(1,0,0)
\end{array}
\end{displaymath}
induces a bijection between $I((n-1)\mathbf{e}(1))$ and the set 
\begin{displaymath}
\{\mathbf{v}=(v_1,v_2,v_3)\in I(n\mathbf{e}(1))\,:\,
v_1\neq 0\}. 
\end{displaymath}
Indeed, the inverse map is easily seen to be given by $\mathbf{w}\mapsto \mathbf{w}-(1,0,0)$.

Therefore we need to show that $f_n+f_{n-3}+f_{n-6}+\dots$ enumerates the set
\begin{displaymath}
R:=\{\mathbf{v}=(v_1,v_2,v_3)\in I(n\mathbf{e}(1))\,:\,
v_1=0\}. 
\end{displaymath}
Note that the restriction of $\Phi=\Phi_n$ to $R$ is injective since the linear span of $R$ intersects 
$\mathbb{Z}(-3,0,0)$, see the definition of $\Phi$, in exactly one element, namely $(0,0,0)$.  Therefore it is enough to enumerate $|\Phi(R)|$.
We claim that  
\begin{equation}\label{eq531new}
\Phi(R)=T_{n}\cup T_{n-3}\cup T_{n-6}\cup \dots 
\end{equation}
(note that this union is automatically disjoint), which is a reformulation of the assertion of the 
lemma due to the rule of sum.

We have $\Phi(R)\subset T_{n}\cup T_{n-1}\cup T_{n-2}\cup\dots$ by Lemma~\ref{lemma33-1}.
As the first coordinate of each $v\in R$ is zero and
$\Phi(v)-v\in\mathbb{Z}(-3,0,0)$ by definition, it follows that the
first coordinate of $\Phi(v)$, for $v\in R$, must be divisible by $3$, that is
$\Phi(R)\subset T_{n}\cup T_{n-3}\cup T_{n-6}\cup \dots$.

For the inverse inclusion, take some $v=(n-2a-3b,a,b)\in T_{n-3j}$, for some $j\geq 0$.
Then $n-2a-3b=3j$, by definition. Clearly, $v\prec n\mathbf{e}(1)$.
If $j=0$, then $v\in R$ and $\Phi(v)=v$.
If $j>0$, then we apply to $v$ the following procedure $j$ times:
first add $(-2,1,0)$, then add  $(-1,-1,1)$, then add $(0,0,-1)$.
It is easy to see that, by doing this, we follow the order $\prec$ inside $I(n\mathbf{e}(1))$.
As $(-3,0,0)=(-2,1,0)+(-1,-1,1)+(0,0,-1)$, as the result, we obtain 
$(0,a,b)\prec v$. In particular, we have $(0,a,b)\in R$.
From the definition of $\Phi$ and Lemma~\ref{lemma33-1}, we obtain
$\Phi((0,a,b))=v$. Therefore $T_{n}\cup T_{n-3}\cup T_{n-6}\cup \dots\subset \Phi(R)$.
This proves \eqref{eq531new} and completes the proof of the lemma. 
\end{proof}

To prove our theorem, we need to prove \eqref{eqnn21}. 
Lemma~\ref{lemma33-2} corresponds to the last two factors on the right hand side of \eqref{eqnn21},
formula~\eqref{eqnn25} corresponds to the first factor and
Lemma~\ref{lemma33-3} corresponds to the second factor. The claim of the theorem now follows
using the rule of sum.
The intuitive picture behind this proof is given in Figure~\ref{fig7}.
\end{proof}

As a direct consequence of Theorem~\ref{thm33} and \cite{CBCBB,CBC}, we get:

\begin{corollary}\label{corthm33}
For $i\in\mathbb{N}$ we have:
\begin{displaymath}
\begin{array}{rcl}
C_{3(i-1)}^{(3)}&=&\frac{1}{8}\big((i+1)(2i^2+i+1)-\frac{1}{2}(1+(-1)^i)\big),\\ 
C_{3(i-1)+1}^{(3)}&=&\frac{1}{8}\big((i+1)(2i^2+3i-1)+\frac{1}{2}(1+(-1)^i)\big),\\ 
C_{3(i-1)+2}^{(3)}&=&\frac{1}{8}\big((i+1)(2i^2+5i+1)-\frac{1}{2}(1+(-1)^i)\big). 
\end{array}
\end{displaymath}
\end{corollary}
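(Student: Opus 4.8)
The plan is to read off the three closed forms from the generating function for $C_n^{(3)}$ obtained in the proof of Theorem~\ref{thm33}. That proof, together with Lemma~\ref{lem4-1}, shows
\begin{displaymath}
\sum_{n\geq 0} C_n^{(3)} t^n = \frac{1+t^2+t^3+t^4+t^5+t^7}{(1-t)(1-t^3)(1-t^6)^2}.
\end{displaymath}
I would deliberately work with this factored form rather than the defining expression for $A028289$, because here the factor $(1-t^4)$ has been cancelled, so every pole is a sixth root of unity and the coefficient sequence is a quasipolynomial in $n$ of period dividing $6$. The pole at $t=1$ has order $4$ (namely $1+1+2$ from the three denominator factors) and the numerator does not vanish there, so the dominant term is a genuine cubic polynomial in $n$; all other poles lie at the primitive cube roots of unity, at $t=-1$, and at the primitive sixth roots of unity.

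The core step is a partial fraction decomposition with respect to the cyclotomic factors $\Phi_1,\Phi_2,\Phi_3,\Phi_6$, from which $[t^n]$ is recovered term by term. A decisive simplification is that the numerator $1+t^2+t^3+t^4+t^5+t^7$ vanishes (to first order) at $t=-1$, at the primitive cube roots of unity, and at the primitive sixth roots of unity, which lowers the order of each of those poles by one. Consequently the contribution of the cube roots is \emph{linear} in $n$ with a period-$3$ phase, and the contributions of $t=-1$ and of the primitive sixth roots are \emph{constant} with period-$6$ phases. I would then reorganise by the residue $n\bmod 3$ and substitute $n=3(i-1)+r$ for $r\in\{0,1,2\}$: the cubic from $t=1$ together with the linear cube-root term becomes a cubic polynomial in $i$ (with $r$-dependent coefficients), while on a fixed class mod $3$ the value $n$ alternates between the residues $r$ and $r+3$ modulo $6$, so the constant period-$6$ phases collapse into a single term proportional to $(-1)^i$. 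Matching against the initial values of $C_n^{(3)}$ listed for $A028289$ then fixes all constants and yields the three displayed formulas.

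The step I expect to demand the most care is precisely the verification that the order-$2$ poles at $t=-1$ and at the primitive sixth roots really drop to simple poles --- this is what guarantees that the parity correction is a \emph{constant} multiple of $(-1)^i$ rather than a linear-in-$i$ term --- and the attendant bookkeeping of residues over the conjugate pairs. Two shortcuts are worth noting. First, one can avoid complex partial fractions entirely by summing within the combinatorial framework of the proof of Theorem~\ref{thm33}: Lemma~\ref{lemma33-2} gives $f_{i+6}=f_i+1$ with explicit base values, Lemma~\ref{lemma33-3} gives $g_i=\sum_{k\geq 0} f_{i-3k}$, and hence $C_n^{(3)}=\sum_{j=0}^{n} f_j\big(\lfloor (n-j)/3\rfloor+1\big)$, a double sum that evaluates to a cubic in $i$ plus a parity term after splitting $j$ by its residue modulo $6$. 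Second, since the statement is asserted to follow from \cite{CBC}, one may simply quote the closed-form count of hollow hexagons given there and invoke Theorem~\ref{thm33}.
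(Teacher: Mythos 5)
Your proposal is correct, and it takes a genuinely different route from the paper. The paper's own ``proof'' of Corollary~\ref{corthm33} is a one-line citation: Theorem~\ref{thm33} identifies $C_n^{(3)}$ with $A028289$, and the three closed forms are simply quoted from the hollow-hexagon enumeration in \cite{CBC} --- which is exactly your second shortcut. Your main argument instead derives the formulas self-contained from the generating function $\frac{1+t^2+t^3+t^4+t^5+t^7}{(1-t)(1-t^3)(1-t^6)^2}$ established inside the proof of Theorem~\ref{thm33}, and all of its ingredients check out: the denominator factors as $\Phi_1^4\Phi_2^2\Phi_3^3\Phi_6^2$, the numerator is nonzero at $t=1$ (value $6$, so a genuine order-$4$ pole and a cubic main term) and does vanish at $t=-1$, at the primitive cube roots, and at the primitive sixth roots of unity (e.g.\ at a primitive cube root $\omega$ the value is $2(1+\omega+\omega^2)=0$), so the parity contribution is indeed constant rather than linear in $i$. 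Note that for your purposes only vanishing \emph{at least} to first order is needed, since you only require an upper bound on the degree of each quasipolynomial constituent: once you know $C_{3(i-1)+r}^{(3)}$ has the form (cubic in $i$)$\,+\,\gamma_r(-1)^i$, that is five parameters per residue class, and matching against the initial values (amply supplied by Figure~\ref{fig2} or the listed segment of $A028289$; I spot-checked the displayed formulas at $n=0,3,6,9,12$, $n=1,4,7$ and $n=2,5,8$) fixes everything rigorously. What your approach buys is independence from the external chemistry literature: the corollary becomes a theorem of the paper rather than an import from \cite{CBC}, at the cost of the cyclotomic bookkeeping you correctly flag as the delicate step; the paper's citation route buys brevity but leaves the closed forms unverified within the paper itself. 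Your first alternative --- summing $C_n^{(3)}=\sum_{j=0}^{n}f_j\bigl(\lfloor (n-j)/3\rfloor+1\bigr)$ directly from Lemmata~\ref{lemma33-2} and \ref{lemma33-3} --- is also a valid elementary variant of the same computation.
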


\section{$C_{n}^{(3)}$ and hollow hexagons}\label{s55}

\subsection{Triangular tilings}\label{s55.1}

Consider a regular triangular tiling of a Euclidean plane as shown in Figure~\ref{fign1}.
We assume that the side of the basic equilateral triangle (the fundamental region) of this tiling has length $1$.
Each intersection point if called a {\em vertex} of the tiling. Each straight line of the tiling
is called a {\em tiling line}. A horizontal tiling line will be called a line of {\em type $1$}.
A tiling line of  {\em type $2$} is a tiling line obtained from a tiling line of type $1$ by
a {\em clockwise} rotation by $\frac{\pi}{3}$.
A tiling line of  {\em type $3$} is a tiling line obtained from a tiling line of type $1$ by
a {\em clockwise} rotation by $\frac{2\pi}{3}$.

\begin{figure}
{\fontsize{5pt}{1pt}
\begin{picture}(200.00,150.00)
\thinlines
\drawline(00.00,00.00)(200.00,00.00)
\drawline(00.00,34.64)(200.00,34.64)
\drawline(00.00,69.28)(200.00,69.28)
\drawline(00.00,103.92)(200.00,103.92)
\drawline(00.00,138.56)(200.00,138.56)
\drawline(10.00,17.32)(190.00,17.32)
\drawline(10.00,51.96)(190.00,51.96)
\drawline(10.00,86.60)(190.00,86.60)
\drawline(10.00,121.24)(190.00,121.24)
\drawline(00.00,34.64)(20.00,00.00)
\drawline(00.00,69.28)(40.00,00.00)
\drawline(00.00,103.92)(60.00,00.00)
\drawline(00.00,138.56)(80.00,00.00)
\drawline(20.00,138.56)(100.00,00.00)
\drawline(40.00,138.56)(120.00,00.00)
\drawline(60.00,138.56)(140.00,00.00)
\drawline(80.00,138.56)(160.00,00.00)
\drawline(100.00,138.56)(180.00,00.00)
\drawline(120.00,138.56)(200.00,00.00)
\drawline(140.00,138.56)(200.00,34.64)
\drawline(160.00,138.56)(200.00,69.28)
\drawline(180.00,138.56)(200.00,103.92)
\drawline(180.00,00.00)(200.00,34.64)
\drawline(160.00,00.00)(200.00,69.28)
\drawline(140.00,00.00)(200.00,103.92)
\drawline(120.00,00.00)(200.00,138.56)
\drawline(100.00,00.00)(180.00,138.56)
\drawline(80.00,00.00)(160.00,138.56)
\drawline(60.00,00.00)(140.00,138.56)
\drawline(40.00,00.00)(120.00,138.56)
\drawline(20.00,00.00)(100.00,138.56)
\drawline(00.00,00.00)(80.00,138.56)
\drawline(00.00,34.64)(60.00,138.56)
\drawline(00.00,69.28)(40.00,138.56)
\drawline(00.00,103.92)(20.00,138.56)
\put(00.00,00.00){\makebox(0,0)[cc]{$\bullet$}} 
\put(20.00,00.00){\makebox(0,0)[cc]{$\bullet$}} 
\put(40.00,00.00){\makebox(0,0)[cc]{$\bullet$}} 
\put(60.00,00.00){\makebox(0,0)[cc]{$\bullet$}} 
\put(80.00,00.00){\makebox(0,0)[cc]{$\bullet$}} 
\put(100.00,00.00){\makebox(0,0)[cc]{$\bullet$}} 
\put(120.00,00.00){\makebox(0,0)[cc]{$\bullet$}} 
\put(140.00,00.00){\makebox(0,0)[cc]{$\bullet$}} 
\put(160.00,00.00){\makebox(0,0)[cc]{$\bullet$}} 
\put(180.00,00.00){\makebox(0,0)[cc]{$\bullet$}} 
\put(200.00,00.00){\makebox(0,0)[cc]{$\bullet$}} 
\put(10.00,17.32){\makebox(0,0)[cc]{$\bullet$}} 
\put(30.00,17.32){\makebox(0,0)[cc]{$\bullet$}} 
\put(50.00,17.32){\makebox(0,0)[cc]{$\bullet$}} 
\put(70.00,17.32){\makebox(0,0)[cc]{$\bullet$}} 
\put(90.00,17.32){\makebox(0,0)[cc]{$\bullet$}} 
\put(110.00,17.32){\makebox(0,0)[cc]{$\bullet$}} 
\put(130.00,17.32){\makebox(0,0)[cc]{$\bullet$}} 
\put(150.00,17.32){\makebox(0,0)[cc]{$\bullet$}} 
\put(170.00,17.32){\makebox(0,0)[cc]{$\bullet$}} 
\put(190.00,17.32){\makebox(0,0)[cc]{$\bullet$}} 
\put(00.00,34.64){\makebox(0,0)[cc]{$\bullet$}} 
\put(20.00,34.64){\makebox(0,0)[cc]{$\bullet$}} 
\put(40.00,34.64){\makebox(0,0)[cc]{$\bullet$}} 
\put(60.00,34.64){\makebox(0,0)[cc]{$\bullet$}} 
\put(80.00,34.64){\makebox(0,0)[cc]{$\bullet$}} 
\put(100.00,34.64){\makebox(0,0)[cc]{$\bullet$}} 
\put(120.00,34.64){\makebox(0,0)[cc]{$\bullet$}} 
\put(140.00,34.64){\makebox(0,0)[cc]{$\bullet$}} 
\put(160.00,34.64){\makebox(0,0)[cc]{$\bullet$}} 
\put(180.00,34.64){\makebox(0,0)[cc]{$\bullet$}} 
\put(200.00,34.64){\makebox(0,0)[cc]{$\bullet$}} 
\put(10.00,51.96){\makebox(0,0)[cc]{$\bullet$}} 
\put(30.00,51.96){\makebox(0,0)[cc]{$\bullet$}} 
\put(50.00,51.96){\makebox(0,0)[cc]{$\bullet$}} 
\put(70.00,51.96){\makebox(0,0)[cc]{$\bullet$}} 
\put(90.00,51.96){\makebox(0,0)[cc]{$\bullet$}} 
\put(110.00,51.96){\makebox(0,0)[cc]{$\bullet$}} 
\put(130.00,51.96){\makebox(0,0)[cc]{$\bullet$}} 
\put(150.00,51.96){\makebox(0,0)[cc]{$\bullet$}} 
\put(170.00,51.96){\makebox(0,0)[cc]{$\bullet$}} 
\put(190.00,51.96){\makebox(0,0)[cc]{$\bullet$}} 
\put(00.00,69.28){\makebox(0,0)[cc]{$\bullet$}} 
\put(20.00,69.28){\makebox(0,0)[cc]{$\bullet$}} 
\put(40.00,69.28){\makebox(0,0)[cc]{$\bullet$}} 
\put(60.00,69.28){\makebox(0,0)[cc]{$\bullet$}} 
\put(80.00,69.28){\makebox(0,0)[cc]{$\bullet$}} 
\put(100.00,69.28){\makebox(0,0)[cc]{$\bullet$}} 
\put(120.00,69.28){\makebox(0,0)[cc]{$\bullet$}} 
\put(140.00,69.28){\makebox(0,0)[cc]{$\bullet$}} 
\put(160.00,69.28){\makebox(0,0)[cc]{$\bullet$}} 
\put(180.00,69.28){\makebox(0,0)[cc]{$\bullet$}} 
\put(200.00,69.28){\makebox(0,0)[cc]{$\bullet$}} 
\put(10.00,86.60){\makebox(0,0)[cc]{$\bullet$}} 
\put(30.00,86.60){\makebox(0,0)[cc]{$\bullet$}} 
\put(50.00,86.60){\makebox(0,0)[cc]{$\bullet$}} 
\put(70.00,86.60){\makebox(0,0)[cc]{$\bullet$}} 
\put(90.00,86.60){\makebox(0,0)[cc]{$\bullet$}} 
\put(110.00,86.60){\makebox(0,0)[cc]{$\bullet$}} 
\put(130.00,86.60){\makebox(0,0)[cc]{$\bullet$}} 
\put(150.00,86.60){\makebox(0,0)[cc]{$\bullet$}} 
\put(170.00,86.60){\makebox(0,0)[cc]{$\bullet$}} 
\put(190.00,86.60){\makebox(0,0)[cc]{$\bullet$}} 
\put(00.00,103.92){\makebox(0,0)[cc]{$\bullet$}} 
\put(20.00,103.92){\makebox(0,0)[cc]{$\bullet$}} 
\put(40.00,103.92){\makebox(0,0)[cc]{$\bullet$}} 
\put(60.00,103.92){\makebox(0,0)[cc]{$\bullet$}} 
\put(80.00,103.92){\makebox(0,0)[cc]{$\bullet$}} 
\put(100.00,103.92){\makebox(0,0)[cc]{$\bullet$}} 
\put(120.00,103.92){\makebox(0,0)[cc]{$\bullet$}} 
\put(140.00,103.92){\makebox(0,0)[cc]{$\bullet$}} 
\put(160.00,103.92){\makebox(0,0)[cc]{$\bullet$}} 
\put(180.00,103.92){\makebox(0,0)[cc]{$\bullet$}} 
\put(200.00,103.92){\makebox(0,0)[cc]{$\bullet$}} 
\put(10.00,121.24){\makebox(0,0)[cc]{$\bullet$}} 
\put(30.00,121.24){\makebox(0,0)[cc]{$\bullet$}} 
\put(50.00,121.24){\makebox(0,0)[cc]{$\bullet$}} 
\put(70.00,121.24){\makebox(0,0)[cc]{$\bullet$}} 
\put(90.00,121.24){\makebox(0,0)[cc]{$\bullet$}} 
\put(110.00,121.24){\makebox(0,0)[cc]{$\bullet$}} 
\put(130.00,121.24){\makebox(0,0)[cc]{$\bullet$}} 
\put(150.00,121.24){\makebox(0,0)[cc]{$\bullet$}} 
\put(170.00,121.24){\makebox(0,0)[cc]{$\bullet$}} 
\put(190.00,121.24){\makebox(0,0)[cc]{$\bullet$}} 
\put(00.00,138.56){\makebox(0,0)[cc]{$\bullet$}} 
\put(20.00,138.56){\makebox(0,0)[cc]{$\bullet$}} 
\put(40.00,138.56){\makebox(0,0)[cc]{$\bullet$}} 
\put(60.00,138.56){\makebox(0,0)[cc]{$\bullet$}} 
\put(80.00,138.56){\makebox(0,0)[cc]{$\bullet$}} 
\put(100.00,138.56){\makebox(0,0)[cc]{$\bullet$}} 
\put(120.00,138.56){\makebox(0,0)[cc]{$\bullet$}} 
\put(140.00,138.56){\makebox(0,0)[cc]{$\bullet$}} 
\put(160.00,138.56){\makebox(0,0)[cc]{$\bullet$}} 
\put(180.00,138.56){\makebox(0,0)[cc]{$\bullet$}} 
\put(200.00,138.56){\makebox(0,0)[cc]{$\bullet$}} 
\put(130.00,05.77){\makebox(0,0)[cc]{$*$}}
\put(110.00,05.77){\makebox(0,0)[cc]{$*$}}
\put(120.00,11.44){\makebox(0,0)[cc]{$*$}}
\put(120.00,22.88){\makebox(0,0)[cc]{$*$}}
\put(100.00,11.44){\makebox(0,0)[cc]{$*$}}
\put(100.00,22.88){\makebox(0,0)[cc]{$*$}}
\put(100.00,45.72){\makebox(0,0)[cc]{$*$}}
\put(100.00,57.16){\makebox(0,0)[cc]{$*$}}
\put(110.00,28.76){\makebox(0,0)[cc]{$*$}}
\put(110.00,40.20){\makebox(0,0)[cc]{$*$}}
\put(90.00,28.76){\makebox(0,0)[cc]{$*$}}
\put(90.00,40.20){\makebox(0,0)[cc]{$*$}}
\put(90.00,74.52){\makebox(0,0)[cc]{$*$}}
\put(90.00,63.08){\makebox(0,0)[cc]{$*$}}
\put(80.00,45.72){\makebox(0,0)[cc]{$*$}}
\put(80.00,57.16){\makebox(0,0)[cc]{$*$}}
\put(80.00,80.04){\makebox(0,0)[cc]{$*$}}
\put(80.00,91.48){\makebox(0,0)[cc]{$*$}}
\put(70.00,74.52){\makebox(0,0)[cc]{$*$}}
\put(70.00,63.08){\makebox(0,0)[cc]{$*$}}
\put(70.00,97.40){\makebox(0,0)[cc]{$*$}}
\put(70.00,108.84){\makebox(0,0)[cc]{$*$}}
\put(60.00,80.04){\makebox(0,0)[cc]{$*$}}
\put(60.00,91.48){\makebox(0,0)[cc]{$*$}}
\put(60.00,114.36){\makebox(0,0)[cc]{$*$}}
\put(60.00,125.80){\makebox(0,0)[cc]{$*$}}
\put(50.00,97.40){\makebox(0,0)[cc]{$*$}}
\put(50.00,108.84){\makebox(0,0)[cc]{$*$}}
\put(50.00,131.72){\makebox(0,0)[cc]{$*$}}
\put(40.00,114.36){\makebox(0,0)[cc]{$*$}}
\put(40.00,125.80){\makebox(0,0)[cc]{$*$}}
\put(30.00,131.72){\makebox(0,0)[cc]{$*$}}
\end{picture}
}
\caption{Triangular tiling, vertices, and tiling lines; 
triangles marked with $*$ form a tiling strip of type $2$}\label{fign1} 
\end{figure}
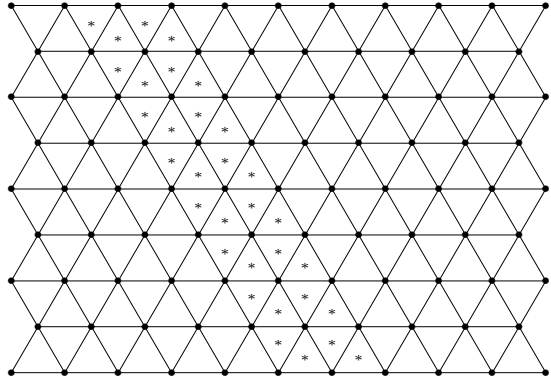

\subsection{T-hexagons and their h-envelopes}\label{s55.2}

For $i=1,2,3$, a {\em tiling strip of type $i$} is the region between two tiling lines of type $i$,
including these lines (see Figure~\ref{fign1} for an example of a tiling strip of type $2$).
In particular, if these two lines coincide, then the corresponding tiling strip coincides with 
each of these  tiling lines. A {\em t-hexagon} is, by definition, the intersection of three tiling 
strips, one for each type. Note that a t-hexagon can be:
\begin{itemize}
\item empty;
\item equal to a vertex of the tiling;
\item equal to a bounded line segment of a tiling line;
\item a polygon with three, four, five or six vertices.
\end{itemize}
By the {\em perimeter} of a t-hexagon we mean its perimeter as a polygon. Clearly, each t-hexagon has
finite perimeter. The perimeter of a vertex is zero,
while the perimeter of a bounded line segment is twice the length of this line segment.

The group of symmetries of the triangular tiling is the triangle group 
\begin{displaymath}
\Delta(3,3,3)=\langle a,b,c\,:\, a^2=b^2=c^2=(ab)^3=(bc)^3=(ca)^3=1\rangle
\end{displaymath}
generated by reflections with respect to the sides of the fundamental region of the tiling.
Two t-hexagons which can be obtained from each other applying some element in $\Delta(3,3,3)$ will be called
{\em isomorphic}. For $n\in\mathbb{Z}_{\geq 0}$, we denote by $T_n$ the number of isomorphism classes of
t-hexagons with perimeter $2n$.

Centroids of tiling triangles form a dual {\em hexagonal tiling} of our plane. Given a
t-hexagon $H$, its {\em hexagonal envelope} $E(H)$ is the union of all hexagons in the hexagonal tiling which 
intersect $H$, see Figure~\ref{fign2} for an example of a t-hexagon (bold lines) and its
hexagonal envelope (dotted lines).

\begin{figure}
{\fontsize{5pt}{1pt}
\begin{picture}(200.00,150.00)
\thinlines
\drawline(00.00,00.00)(200.00,00.00)
\drawline(00.00,34.64)(200.00,34.64)
\drawline(00.00,69.28)(200.00,69.28)
\drawline(00.00,103.92)(200.00,103.92)
\drawline(00.00,138.56)(200.00,138.56)
\drawline(10.00,17.32)(190.00,17.32)
\drawline(10.00,51.96)(190.00,51.96)
\drawline(10.00,86.60)(190.00,86.60)
\drawline(10.00,121.24)(190.00,121.24)
\drawline(00.00,34.64)(20.00,00.00)
\drawline(00.00,69.28)(40.00,00.00)
\drawline(00.00,103.92)(60.00,00.00)
\drawline(00.00,138.56)(80.00,00.00)
\drawline(20.00,138.56)(100.00,00.00)
\drawline(40.00,138.56)(120.00,00.00)
\drawline(60.00,138.56)(140.00,00.00)
\drawline(80.00,138.56)(160.00,00.00)
\drawline(100.00,138.56)(180.00,00.00)
\drawline(120.00,138.56)(200.00,00.00)
\drawline(140.00,138.56)(200.00,34.64)
\drawline(160.00,138.56)(200.00,69.28)
\drawline(180.00,138.56)(200.00,103.92)
\drawline(180.00,00.00)(200.00,34.64)
\drawline(160.00,00.00)(200.00,69.28)
\drawline(140.00,00.00)(200.00,103.92)
\drawline(120.00,00.00)(200.00,138.56)
\drawline(100.00,00.00)(180.00,138.56)
\drawline(80.00,00.00)(160.00,138.56)
\drawline(60.00,00.00)(140.00,138.56)
\drawline(40.00,00.00)(120.00,138.56)
\drawline(20.00,00.00)(100.00,138.56)
\drawline(00.00,00.00)(80.00,138.56)
\drawline(00.00,34.64)(60.00,138.56)
\drawline(00.00,69.28)(40.00,138.56)
\drawline(00.00,103.92)(20.00,138.56)
\put(00.00,00.00){\makebox(0,0)[cc]{$\bullet$}} 
\put(20.00,00.00){\makebox(0,0)[cc]{$\bullet$}} 
\put(40.00,00.00){\makebox(0,0)[cc]{$\bullet$}} 
\put(60.00,00.00){\makebox(0,0)[cc]{$\bullet$}} 
\put(80.00,00.00){\makebox(0,0)[cc]{$\bullet$}} 
\put(100.00,00.00){\makebox(0,0)[cc]{$\bullet$}} 
\put(120.00,00.00){\makebox(0,0)[cc]{$\bullet$}} 
\put(140.00,00.00){\makebox(0,0)[cc]{$\bullet$}} 
\put(160.00,00.00){\makebox(0,0)[cc]{$\bullet$}} 
\put(180.00,00.00){\makebox(0,0)[cc]{$\bullet$}} 
\put(200.00,00.00){\makebox(0,0)[cc]{$\bullet$}} 
\put(10.00,17.32){\makebox(0,0)[cc]{$\bullet$}} 
\put(30.00,17.32){\makebox(0,0)[cc]{$\bullet$}} 
\put(50.00,17.32){\makebox(0,0)[cc]{$\bullet$}} 
\put(70.00,17.32){\makebox(0,0)[cc]{$\bullet$}} 
\put(90.00,17.32){\makebox(0,0)[cc]{$\bullet$}} 
\put(110.00,17.32){\makebox(0,0)[cc]{$\bullet$}} 
\put(130.00,17.32){\makebox(0,0)[cc]{$\bullet$}} 
\put(150.00,17.32){\makebox(0,0)[cc]{$\bullet$}} 
\put(170.00,17.32){\makebox(0,0)[cc]{$\bullet$}} 
\put(190.00,17.32){\makebox(0,0)[cc]{$\bullet$}} 
\put(00.00,34.64){\makebox(0,0)[cc]{$\bullet$}} 
\put(20.00,34.64){\makebox(0,0)[cc]{$\bullet$}} 
\put(40.00,34.64){\makebox(0,0)[cc]{$\bullet$}} 
\put(60.00,34.64){\makebox(0,0)[cc]{$\bullet$}} 
\put(80.00,34.64){\makebox(0,0)[cc]{$\bullet$}} 
\put(100.00,34.64){\makebox(0,0)[cc]{$\bullet$}} 
\put(120.00,34.64){\makebox(0,0)[cc]{$\bullet$}} 
\put(140.00,34.64){\makebox(0,0)[cc]{$\bullet$}} 
\put(160.00,34.64){\makebox(0,0)[cc]{$\bullet$}} 
\put(180.00,34.64){\makebox(0,0)[cc]{$\bullet$}} 
\put(200.00,34.64){\makebox(0,0)[cc]{$\bullet$}} 
\put(10.00,51.96){\makebox(0,0)[cc]{$\bullet$}} 
\put(30.00,51.96){\makebox(0,0)[cc]{$\bullet$}} 
\put(50.00,51.96){\makebox(0,0)[cc]{$\bullet$}} 
\put(70.00,51.96){\makebox(0,0)[cc]{$\bullet$}} 
\put(90.00,51.96){\makebox(0,0)[cc]{$\bullet$}} 
\put(110.00,51.96){\makebox(0,0)[cc]{$\bullet$}} 
\put(130.00,51.96){\makebox(0,0)[cc]{$\bullet$}} 
\put(150.00,51.96){\makebox(0,0)[cc]{$\bullet$}} 
\put(170.00,51.96){\makebox(0,0)[cc]{$\bullet$}} 
\put(190.00,51.96){\makebox(0,0)[cc]{$\bullet$}} 
\put(00.00,69.28){\makebox(0,0)[cc]{$\bullet$}} 
\put(20.00,69.28){\makebox(0,0)[cc]{$\bullet$}} 
\put(40.00,69.28){\makebox(0,0)[cc]{$\bullet$}} 
\put(60.00,69.28){\makebox(0,0)[cc]{$\bullet$}} 
\put(80.00,69.28){\makebox(0,0)[cc]{$\bullet$}} 
\put(100.00,69.28){\makebox(0,0)[cc]{$\bullet$}} 
\put(120.00,69.28){\makebox(0,0)[cc]{$\bullet$}} 
\put(140.00,69.28){\makebox(0,0)[cc]{$\bullet$}} 
\put(160.00,69.28){\makebox(0,0)[cc]{$\bullet$}} 
\put(180.00,69.28){\makebox(0,0)[cc]{$\bullet$}} 
\put(200.00,69.28){\makebox(0,0)[cc]{$\bullet$}} 
\put(10.00,86.60){\makebox(0,0)[cc]{$\bullet$}} 
\put(30.00,86.60){\makebox(0,0)[cc]{$\bullet$}} 
\put(50.00,86.60){\makebox(0,0)[cc]{$\bullet$}} 
\put(70.00,86.60){\makebox(0,0)[cc]{$\bullet$}} 
\put(90.00,86.60){\makebox(0,0)[cc]{$\bullet$}} 
\put(110.00,86.60){\makebox(0,0)[cc]{$\bullet$}} 
\put(130.00,86.60){\makebox(0,0)[cc]{$\bullet$}} 
\put(150.00,86.60){\makebox(0,0)[cc]{$\bullet$}} 
\put(170.00,86.60){\makebox(0,0)[cc]{$\bullet$}} 
\put(190.00,86.60){\makebox(0,0)[cc]{$\bullet$}} 
\put(00.00,103.92){\makebox(0,0)[cc]{$\bullet$}} 
\put(20.00,103.92){\makebox(0,0)[cc]{$\bullet$}} 
\put(40.00,103.92){\makebox(0,0)[cc]{$\bullet$}} 
\put(60.00,103.92){\makebox(0,0)[cc]{$\bullet$}} 
\put(80.00,103.92){\makebox(0,0)[cc]{$\bullet$}} 
\put(100.00,103.92){\makebox(0,0)[cc]{$\bullet$}} 
\put(120.00,103.92){\makebox(0,0)[cc]{$\bullet$}} 
\put(140.00,103.92){\makebox(0,0)[cc]{$\bullet$}} 
\put(160.00,103.92){\makebox(0,0)[cc]{$\bullet$}} 
\put(180.00,103.92){\makebox(0,0)[cc]{$\bullet$}} 
\put(200.00,103.92){\makebox(0,0)[cc]{$\bullet$}} 
\put(10.00,121.24){\makebox(0,0)[cc]{$\bullet$}} 
\put(30.00,121.24){\makebox(0,0)[cc]{$\bullet$}} 
\put(50.00,121.24){\makebox(0,0)[cc]{$\bullet$}} 
\put(70.00,121.24){\makebox(0,0)[cc]{$\bullet$}} 
\put(90.00,121.24){\makebox(0,0)[cc]{$\bullet$}} 
\put(110.00,121.24){\makebox(0,0)[cc]{$\bullet$}} 
\put(130.00,121.24){\makebox(0,0)[cc]{$\bullet$}} 
\put(150.00,121.24){\makebox(0,0)[cc]{$\bullet$}} 
\put(170.00,121.24){\makebox(0,0)[cc]{$\bullet$}} 
\put(190.00,121.24){\makebox(0,0)[cc]{$\bullet$}} 
\put(00.00,138.56){\makebox(0,0)[cc]{$\bullet$}} 
\put(20.00,138.56){\makebox(0,0)[cc]{$\bullet$}} 
\put(40.00,138.56){\makebox(0,0)[cc]{$\bullet$}} 
\put(60.00,138.56){\makebox(0,0)[cc]{$\bullet$}} 
\put(80.00,138.56){\makebox(0,0)[cc]{$\bullet$}} 
\put(100.00,138.56){\makebox(0,0)[cc]{$\bullet$}} 
\put(120.00,138.56){\makebox(0,0)[cc]{$\bullet$}} 
\put(140.00,138.56){\makebox(0,0)[cc]{$\bullet$}} 
\put(160.00,138.56){\makebox(0,0)[cc]{$\bullet$}} 
\put(180.00,138.56){\makebox(0,0)[cc]{$\bullet$}} 
\put(200.00,138.56){\makebox(0,0)[cc]{$\bullet$}} 
\linethickness{2pt}
\drawline(80.00,34.64)(140.00,34.64)
\drawline(150.00,51.96)(140.00,34.64)
\drawline(150.00,51.96)(140.00,69.28)
\drawline(140.00,69.28)(100.00,69.28)
\drawline(80.00,34.64)(100.00,69.28)
\linethickness{1pt}
\dottedline(80.00,22.88)(90.00,28.76){1}
\dottedline(100.00,22.88)(90.00,28.76){1}
\dottedline(100.00,22.88)(110.00,28.76){1}
\dottedline(120.00,22.88)(110.00,28.76){1}
\dottedline(120.00,22.88)(130.00,28.76){1}
\dottedline(140.00,22.88)(130.00,28.76){1}
\dottedline(140.00,22.88)(150.00,28.76){1}
\dottedline(150.00,40.20)(150.00,28.76){1}
\dottedline(150.00,40.20)(160.00,45.72){1}
\dottedline(160.00,57.16)(160.00,45.72){1}
\dottedline(160.00,57.16)(150.00,63.08){1}
\dottedline(150.00,74.52)(150.00,63.08){1}
\dottedline(150.00,74.52)(140.00,80.04){1}
\dottedline(130.00,74.52)(140.00,80.04){1}
\dottedline(130.00,74.52)(120.00,80.04){1}
\dottedline(110.00,74.52)(120.00,80.04){1}
\dottedline(110.00,74.52)(100.00,80.04){1}
\dottedline(90.00,74.52)(100.00,80.04){1}
\dottedline(90.00,74.52)(90.00,63.08){1}
\dottedline(80.00,57.16)(90.00,63.08){1}
\dottedline(80.00,57.16)(80.00,45.72){1}
\dottedline(70.00,40.20)(80.00,45.72){1}
\dottedline(70.00,40.20)(70.00,28.76){1}
\dottedline(80.00,22.88)(70.00,28.76){1}
\end{picture}
}
\caption{A t-hexagon and its hexagonal envelope}\label{fign2} 
\end{figure}
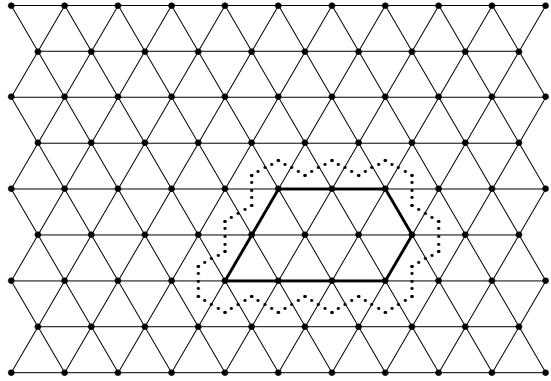

\begin{lemma}\label{lemn77}
Let $H$ be a t-hexagon of perimeter $i$ for some $i\in\mathbb{Z}_{\geq 0}$. Then the
hexagonal envelope of $H$ has $6+2i$ vertices.
\end{lemma}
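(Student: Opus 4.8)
The plan is to pass to the dual picture of the envelope and reduce the vertex count to an edge-boundary count in the triangular lattice. Write $\mathbb{L}$ for the set of vertices of the triangular tiling. Each hexagon of the dual tiling is the Voronoi cell $\mathrm{Vor}(v)$ of a vertex $v\in\mathbb{L}$ (its six corners are the circumcentres, equivalently centroids, of the six fundamental triangles around $v$), so $E(H)=\bigcup_{v\in S}\mathrm{Vor}(v)$ where $S:=\{v\in\mathbb{L}\,:\,\mathrm{Vor}(v)\cap H\neq\varnothing\}$. First I would record two structural facts about the polyhex $E(H)$: every vertex of the dual tiling has degree three with edges meeting at $120^\circ$, so traversing $\partial E(H)$ one turns by $\pm 60^\circ$ at each vertex and never goes straight; and $E(H)$ is simply connected (this follows from convexity of $H$). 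Consequently $\partial E(H)$ is a single simple closed polygon, whose number of vertices equals its number of edges. Each edge of $\partial E(H)$ is a dual-tiling edge separating a cell $\mathrm{Vor}(u)$ with $u\in S$ from a cell $\mathrm{Vor}(w)$ with $w\notin S$, and such edges are exactly the duals of the tiling edges $uw$ having precisely one endpoint in $S$. Hence the number of vertices of $E(H)$ equals $\lvert\partial_e S\rvert$, the number of tiling edges with exactly one endpoint in $S$.

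The key step, and the main obstacle, is to identify $S$ precisely as $S=H\cap\mathbb{L}$. The inclusion $H\cap\mathbb{L}\subseteq S$ is immediate since $v\in\mathrm{Vor}(v)$. For the reverse inclusion I would use that $H$, as an intersection of tiling strips, is an intersection of finitely many closed half-planes each bounded by a tiling line $L$, together with the fact that reflection in any tiling line is a symmetry of $\mathbb{L}$ (it lies in $\Delta(3,3,3)$). If $v\notin H$, choose a bounding line $L$ with $v$ strictly on the far side and let $v^{r}$ be the reflection of $v$ in $L$; then $v^{r}\in\mathbb{L}$ lies on the $H$-side, and since $L$ is the perpendicular bisector of $vv^{r}$ the cell $\mathrm{Vor}(v)$ lies in the closed far half-plane. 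Any putative $x\in\mathrm{Vor}(v)\cap H$ would then have to lie on $L$ itself; but for a point on a tiling line the nearest lattice points are those on that same line (at distance $\le\tfrac12$), whereas $v$ is off $L$ (at distance $\ge\tfrac{\sqrt3}{2}$), contradicting $x\in\mathrm{Vor}(v)$. This simultaneously handles the delicate tie case in which a closed exterior cell might touch $\partial H$ along a Voronoi edge, and yields $S=H\cap\mathbb{L}$.

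Finally I would count $\lvert\partial_e S\rvert$ for $S=H\cap\mathbb{L}$. Summing degrees in the $6$-regular lattice graph gives $\lvert\partial_e S\rvert=6\lvert S\rvert-2E(S)$, where $E(S)$ is the number of tiling edges with both endpoints in $S$; by convexity each such edge lies in $H$, so these edges and the fundamental triangles with vertices in $S$ assemble into the triangulation of $H$ by fundamental triangles. A short Euler-characteristic computation (equivalently, the triangular-lattice Pick relation $F=2I+B-2$ relating the numbers of fundamental triangles $F$, interior points $I$ and boundary points $B$) then gives $E(S)=3\lvert S\rvert-\mathrm{perim}(H)-3$, using that $\mathrm{perim}(H)$ equals the number $B$ of unit tiling edges along $\partial H$. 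Substituting yields $\lvert\partial_e S\rvert=6\lvert S\rvert-2\bigl(3\lvert S\rvert-\mathrm{perim}(H)-3\bigr)=2\,\mathrm{perim}(H)+6$, which is the assertion. The two degenerate t-hexagons fall outside this two-dimensional count and I would check them directly: a single vertex gives one cell with $6$ vertices (perimeter $0$), while a segment through $\ell+1$ collinear lattice points gives $\lvert\partial_e S\rvert=6(\ell+1)-2\ell=4\ell+6=2\,\mathrm{perim}+6$, since such a segment has perimeter $2\ell$.
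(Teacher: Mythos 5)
Your proof is correct, but it takes a genuinely different route from the paper's. The paper argues by induction on the perimeter: it inspects all t-hexagons of perimeter $i\le 5$ directly, and for the inductive step pulls the bounding tiling lines of the three strips together one step at a time, verifying in four separate collapse cases (a segment collapsing to a vertex, and three trapezoid collapses) that the perimeter and the number of envelope vertices always drop in the ratio $1:2$. You instead give a closed-form double count: the heart of your argument is the exact identification $E(H)=\bigcup_{v\in H\cap\mathbb{L}}\mathrm{Vor}(v)$, proved by the reflection argument, where the quantitative gap $\tfrac{\sqrt3}{2}>\tfrac12$ correctly disposes of the boundary-tie case (a closed exterior cell touching $H$ along the bounding line), which is the only delicate point; once $S=H\cap\mathbb{L}$ is known, the vertex count becomes the edge-boundary count $\lvert\partial_e S\rvert = 6\lvert S\rvert - 2E(S)$, and the Euler--Pick relation $E(S)=3\lvert S\rvert-\mathrm{perim}(H)-3$ for the triangulated convex lattice polygon $H$ yields $2\,\mathrm{perim}(H)+6$ in one line, with the two degenerate t-hexagons checked by hand (correctly, since the two-dimensional Euler count does not apply to them). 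What each approach buys: the paper's induction is elementary and purely pictorial, but needs the case analysis and tacitly uses that every t-hexagon shrinks to a base case under the four moves; yours is uniform, proves the stronger structural fact that the envelope consists exactly of the cells of the lattice points lying in $H$, and visibly generalizes to any convex lattice polygon bounded by tiling lines. Two steps you assert rather than prove --- that $E(H)$ is connected and simply connected with a single boundary circuit (no pinch points; indeed these cannot occur in a hexagonal tiling, since any two cells meeting at a dual vertex already share an edge), and the Euler-characteristic identity itself --- are standard, but the one-line justifications should be included in a final write-up.
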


\begin{proof}
For $i=0,1,2,3,4,5$ the statement of the lemma follows by inspecting all t-hexagons 
of perimeter $i$. These t-hexagons are given in the following list:
\begin{center}
\begin{picture}(260.00,45.00)
\put(10.00,30.00){\makebox(0,0)[cc]{$\bullet$}}
\put(10.00,12.00){\makebox(0,0)[cc]{\tiny$i=0$}}
\put(50.00,30.00){\makebox(0,0)[cc]{$\bullet$}}
\put(60.00,30.00){\makebox(0,0)[cc]{$\bullet$}}
\put(55.00,12.00){\makebox(0,0)[cc]{\tiny$i=2$}}
\put(90.00,30.00){\makebox(0,0)[cc]{$\bullet$}}
\put(100.00,30.00){\makebox(0,0)[cc]{$\bullet$}}
\put(95.00,38.66){\makebox(0,0)[cc]{$\bullet$}}
\put(95.00,12.00){\makebox(0,0)[cc]{\tiny$i=3$}}
\put(140.00,30.00){\makebox(0,0)[cc]{$\bullet$}}
\put(150.00,30.00){\makebox(0,0)[cc]{$\bullet$}}
\put(160.00,30.00){\makebox(0,0)[cc]{$\bullet$}}
\put(180.00,30.00){\makebox(0,0)[cc]{$\bullet$}}
\put(190.00,30.00){\makebox(0,0)[cc]{$\bullet$}}
\put(195.00,38.66){\makebox(0,0)[cc]{$\bullet$}}
\put(185.00,38.66){\makebox(0,0)[cc]{$\bullet$}}
\put(170.00,12.00){\makebox(0,0)[cc]{\tiny$i=4$}}
\put(230.00,30.00){\makebox(0,0)[cc]{$\bullet$}}
\put(240.00,30.00){\makebox(0,0)[cc]{$\bullet$}}
\put(250.00,30.00){\makebox(0,0)[cc]{$\bullet$}}
\put(235.00,38.66){\makebox(0,0)[cc]{$\bullet$}}
\put(245.00,38.66){\makebox(0,0)[cc]{$\bullet$}}
\put(240.00,12.00){\makebox(0,0)[cc]{\tiny$i=5$}}
\linethickness{1pt}
\drawline(50.00,30.00)(60.00,30.00)
\drawline(90.00,30.00)(100.00,30.00)
\drawline(90.00,30.00)(95.00,38.66)
\drawline(100.00,30.00)(95.00,38.66)
\drawline(140.00,30.00)(160.00,30.00)
\drawline(180.00,30.00)(190.00,30.00)
\drawline(180.00,30.00)(185.00,38.66)
\drawline(185.00,38.66)(195.00,38.66)
\drawline(190.00,30.00)(195.00,38.66)
\drawline(230.00,30.00)(250.00,30.00)
\drawline(235.00,38.66)(245.00,38.66)
\drawline(250.00,30.00)(245.00,38.66)
\drawline(235.00,38.66)(230.00,30.00)
\thinlines
\dottedline(10.00,35.77)(14.99,32.88){1}
\dottedline(14.99,27.12)(14.99,32.88){1}
\dottedline(14.99,27.12)(10.00,24.23){1}
\dottedline(05.01,27.12)(10.00,24.23){1}
\dottedline(05.01,27.12)(05.01,32.88){1}
\dottedline(10.00,35.77)(05.01,32.88){1}
\dottedline(50.00,35.77)(54.99,32.88){1}
\dottedline(54.99,27.12)(50.00,24.23){1}
\dottedline(45.01,27.12)(50.00,24.23){1}
\dottedline(45.01,27.12)(45.01,32.88){1}
\dottedline(50.00,35.77)(45.01,32.88){1}
\dottedline(60.00,35.77)(64.99,32.88){1}
\dottedline(64.99,27.12)(64.99,32.88){1}
\dottedline(64.99,27.12)(60.00,24.23){1}
\dottedline(55.01,27.12)(60.00,24.23){1}
\dottedline(60.00,35.77)(55.01,32.88){1}
\dottedline(94.99,27.12)(90.00,24.23){1}
\dottedline(85.01,27.12)(90.00,24.23){1}
\dottedline(85.01,27.12)(85.01,32.88){1}
\dottedline(90.00,35.77)(85.01,32.88){1}
\dottedline(90.00,35.77)(90.00,41.54){1}
\dottedline(100.00,35.77)(100.00,41.54){1}
\dottedline(95.00,44.42)(100.00,41.54){1}
\dottedline(95.00,44.42)(90.00,41.54){1}
\dottedline(100.00,35.77)(104.99,32.88){1}
\dottedline(104.99,27.12)(104.99,32.88){1}
\dottedline(104.99,27.12)(100.00,24.23){1}
\dottedline(95.01,27.12)(100.00,24.23){1}
\dottedline(140.00,35.77)(144.99,32.88){1}
\dottedline(144.99,27.12)(140.00,24.23){1}
\dottedline(135.01,27.12)(140.00,24.23){1}
\dottedline(135.01,27.12)(135.01,32.88){1}
\dottedline(140.00,35.77)(135.01,32.88){1}
\dottedline(150.00,35.77)(154.99,32.88){1}
\dottedline(160.00,35.77)(164.99,32.88){1}
\dottedline(164.99,27.12)(164.99,32.88){1}
\dottedline(154.99,27.12)(150.00,24.23){1}
\dottedline(145.01,27.12)(150.00,24.23){1}
\dottedline(150.00,35.77)(145.01,32.88){1}
\dottedline(164.99,27.12)(160.00,24.23){1}
\dottedline(155.01,27.12)(160.00,24.23){1}
\dottedline(160.00,35.77)(155.01,32.88){1}
\dottedline(250.00,35.77)(254.99,32.88){1}
\dottedline(254.99,27.12)(254.99,32.88){1}
\dottedline(244.99,27.12)(240.00,24.23){1}
\dottedline(235.01,27.12)(240.00,24.23){1}
\dottedline(254.99,27.12)(250.00,24.23){1}
\dottedline(245.01,27.12)(250.00,24.23){1}
\dottedline(234.99,27.12)(230.00,24.23){1}
\dottedline(225.01,27.12)(230.00,24.23){1}
\dottedline(225.01,27.12)(225.01,32.88){1}
\dottedline(230.00,35.77)(225.01,32.88){1}
\dottedline(230.00,35.77)(230.00,41.54){1}
\dottedline(234.99,44.42)(230.00,41.54){1}
\dottedline(235.01,44.42)(240.00,41.54){1}
\dottedline(244.99,44.42)(240.00,41.54){1}
\dottedline(245.01,44.42)(250.00,41.54){1}
\dottedline(250.00,35.77)(250.00,41.54){1}
\dottedline(184.99,27.12)(180.00,24.23){1}
\dottedline(175.01,27.12)(180.00,24.23){1}
\dottedline(175.01,27.12)(175.01,32.88){1}
\dottedline(180.00,35.77)(175.01,32.88){1}
\dottedline(180.00,35.77)(180.00,41.54){1}
\dottedline(185.00,44.42)(180.00,41.54){1}
\dottedline(185.00,44.42)(190.00,41.54){1}
\dottedline(195.00,44.42)(190.00,41.54){1}
\dottedline(195.00,44.42)(200.00,41.54){1}
\dottedline(200.00,35.77)(200.00,41.54){1}
\dottedline(200.00,35.77)(194.99,32.88){1}
\dottedline(194.99,27.12)(194.99,32.88){1}
\dottedline(194.99,27.12)(190.00,24.23){1}
\dottedline(185.01,27.12)(190.00,24.23){1}
\end{picture}
\end{center}

We claim that the rest follows by induction on $i$.
Indeed, assume that $H$ is the intersection of three tiling strips (one for each type).
We can, in turn, pull the lines defining these strips closer to each other, one step at a time.
Eventually by one such step we will get a smaller t-hexagon $H'$. There are four possible cases.

{\bf Case~1.} The t-hexagon $H'$ is obtained from $H$ by collapsing a line segment to a vertex
as illustrated here:

\begin{center}
\begin{picture}(200.00,50.00)
\dashline(20.00,10.00)(80.00,10.00){2}
\dashline(120.00,10.00)(180.00,10.00){2}
\put(50.00,10.00){\makebox(0,0)[cc]{$\bullet$}} 
\put(50.00,30.00){\makebox(0,0)[cc]{$\bullet$}} 
\put(150.00,10.00){\makebox(0,0)[cc]{$\bullet$}} 
\put(100.00,20.00){\makebox(0,0)[cc]{$\rightarrow$}} 
\linethickness{1pt}
\drawline(50.00,10.00)(50.00,30.00)
\linethickness{1pt}
\dottedline(44.33,20.00)(38.46,10.00){1}
\dottedline(44.33,20.00)(38.46,30.00){1}
\dottedline(44.33,20.00)(38.46,30.00){1}
\dottedline(44.33,40.00)(38.46,30.00){1}
\dottedline(44.33,40.00)(55.77,40.00){1}
\dottedline(61.54,30.00)(55.77,40.00){1}
\dottedline(61.54,30.00)(55.77,20.00){1}
\dottedline(61.54,10.00)(55.77,20.00){1}
\dottedline(144.33,20.00)(138.46,10.00){1}
\dottedline(161.54,10.00)(155.77,20.00){1}
\dottedline(144.33,20.00)(155.77,20.00){1}
\end{picture}
\end{center}

In this case we see that the perimeter of $H$ decreases by $2$ and the number of vertices of the hexagonal envelope 
decreases  by $4$.

{\bf Case~2.} The t-hexagon $H'$ is obtained from $H$ by collapsing a trapezoid segment onto its basis
as illustrated here (the length of the segment can be arbitrary):

\begin{center}
\begin{picture}(300.00,50.00)
\dashline(20.00,10.00)(140.00,10.00){2}
\dashline(170.00,10.00)(290.00,10.00){2}
\put(50.00,10.00){\makebox(0,0)[cc]{$\bullet$}}  
\put(110.00,10.00){\makebox(0,0)[cc]{$\bullet$}} 
\put(60.00,27.32){\makebox(0,0)[cc]{$\bullet$}} 
\put(80.00,27.32){\makebox(0,0)[cc]{$\bullet$}} 
\put(100.00,27.32){\makebox(0,0)[cc]{$\bullet$}} 
\put(200.00,10.00){\makebox(0,0)[cc]{$\bullet$}} 
\put(220.00,10.00){\makebox(0,0)[cc]{$\bullet$}} 
\put(240.00,10.00){\makebox(0,0)[cc]{$\bullet$}} 
\put(260.00,10.00){\makebox(0,0)[cc]{$\bullet$}} 
\put(155.00,20.00){\makebox(0,0)[cc]{$\rightarrow$}} 
\linethickness{1pt}
\drawline(50.00,10.00)(60.00,27.32)
\drawline(100.00,27.32)(60.00,27.32)
\drawline(100.00,27.32)(110.00,10.00)
\drawline(200.00,10.00)(260.00,10.00)
\linethickness{1pt}
\dottedline(40.00,10.00)(40.00,15.77){1}
\dottedline(50.00,21.54)(40.00,15.77){1}
\dottedline(50.00,21.54)(50.00,32.08){1}
\dottedline(60.00,37.85)(50.00,32.08){1}
\dottedline(60.00,37.85)(70.00,32.08){1}
\dottedline(80.00,37.85)(70.00,32.08){1}
\dottedline(80.00,37.85)(90.00,32.08){1}
\dottedline(100.00,37.85)(90.00,32.08){1}
\dottedline(100.00,37.85)(110.00,32.08){1}
\dottedline(110.00,21.54)(110.00,32.08){1}
\dottedline(110.00,21.54)(120.00,15.77){1}
\dottedline(120.00,10.00)(120.00,15.77){1}
\dottedline(190.00,10.00)(190.00,15.77){1}
\dottedline(200.00,21.54)(190.00,15.77){1}
\dottedline(210.00,15.77)(200.00,21.54){1}
\dottedline(220.00,21.54)(210.00,15.77){1}
\dottedline(230.00,15.77)(220.00,21.54){1}
\dottedline(240.00,21.54)(230.00,15.77){1}
\dottedline(250.00,15.77)(240.00,21.54){1}
\dottedline(260.00,21.54)(250.00,15.77){1}
\dottedline(270.00,15.77)(260.00,21.54){1}
\dottedline(270.00,15.77)(270.00,10.00){1}
\end{picture}
\end{center}

In this case we see that the perimeter of $H$ decreases by $1$ and the number of vertices of the hexagonal envelope 
by $2$.

{\bf Case~3.} The t-hexagon $H'$ is obtained from $H$ by collapsing a trapezoid segment onto its basis
as illustrated here (the length of the segment can be arbitrary):

\begin{center}
\begin{picture}(300.00,50.00)
\dashline(20.00,10.00)(140.00,10.00){2}
\dashline(170.00,10.00)(290.00,10.00){2}
\put(50.00,10.00){\makebox(0,0)[cc]{$\bullet$}}  
\put(90.00,10.00){\makebox(0,0)[cc]{$\bullet$}} 
\put(60.00,27.32){\makebox(0,0)[cc]{$\bullet$}} 
\put(80.00,27.32){\makebox(0,0)[cc]{$\bullet$}} 
\put(100.00,27.32){\makebox(0,0)[cc]{$\bullet$}} 
\put(200.00,10.00){\makebox(0,0)[cc]{$\bullet$}} 
\put(220.00,10.00){\makebox(0,0)[cc]{$\bullet$}} 
\put(240.00,10.00){\makebox(0,0)[cc]{$\bullet$}}  
\put(155.00,20.00){\makebox(0,0)[cc]{$\rightarrow$}} 
\linethickness{1pt}
\drawline(50.00,10.00)(60.00,27.32)
\drawline(100.00,27.32)(60.00,27.32)
\drawline(100.00,27.32)(90.00,10.00)
\drawline(200.00,10.00)(240.00,10.00)
\linethickness{1pt}
\dottedline(40.00,10.00)(40.00,15.77){1}
\dottedline(50.00,21.54)(40.00,15.77){1}
\dottedline(50.00,21.54)(50.00,32.08){1}
\dottedline(60.00,37.85)(50.00,32.08){1}
\dottedline(60.00,37.85)(70.00,32.08){1}
\dottedline(80.00,37.85)(70.00,32.08){1}
\dottedline(80.00,37.85)(90.00,32.08){1}
\dottedline(100.00,37.85)(90.00,32.08){1}
\dottedline(100.00,37.85)(110.00,32.08){1}
\dottedline(110.00,21.54)(110.00,32.08){1}
\dottedline(110.00,21.54)(100.00,15.77){1}
\dottedline(100.00,10.00)(100.00,15.77){1}
\dottedline(190.00,10.00)(190.00,15.77){1}
\dottedline(200.00,21.54)(190.00,15.77){1}
\dottedline(210.00,15.77)(200.00,21.54){1}
\dottedline(220.00,21.54)(210.00,15.77){1}
\dottedline(230.00,15.77)(220.00,21.54){1}
\dottedline(240.00,21.54)(230.00,15.77){1}
\dottedline(250.00,15.77)(240.00,21.54){1}
\dottedline(250.00,15.77)(250.00,10.00){1}
\end{picture}
\end{center}

In this case we see that the perimeter of $H$ decreases by $2$ and the number of vertices of the hexagonal envelope 
by $4$.

{\bf Case~4.} The t-hexagon $H'$ is obtained from $H$ by collapsing a trapezoid segment to its basis
as illustrated here:

\begin{center}
\begin{picture}(300.00,50.00)
\dashline(20.00,10.00)(140.00,10.00){2}
\dashline(170.00,10.00)(290.00,10.00){2}
\put(70.00,10.00){\makebox(0,0)[cc]{$\bullet$}}  
\put(90.00,10.00){\makebox(0,0)[cc]{$\bullet$}} 
\put(60.00,27.32){\makebox(0,0)[cc]{$\bullet$}} 
\put(80.00,27.32){\makebox(0,0)[cc]{$\bullet$}} 
\put(100.00,27.32){\makebox(0,0)[cc]{$\bullet$}} 
\put(220.00,10.00){\makebox(0,0)[cc]{$\bullet$}} 
\put(240.00,10.00){\makebox(0,0)[cc]{$\bullet$}}  
\put(155.00,20.00){\makebox(0,0)[cc]{$\rightarrow$}} 
\linethickness{1pt}
\drawline(70.00,10.00)(60.00,27.32)
\drawline(100.00,27.32)(60.00,27.32)
\drawline(100.00,27.32)(90.00,10.00)
\drawline(220.00,10.00)(240.00,10.00)
\linethickness{1pt}
\dottedline(60.00,10.00)(60.00,15.77){1}
\dottedline(50.00,21.54)(60.00,15.77){1}
\dottedline(50.00,21.54)(50.00,32.08){1}
\dottedline(60.00,37.85)(50.00,32.08){1}
\dottedline(60.00,37.85)(70.00,32.08){1}
\dottedline(80.00,37.85)(70.00,32.08){1}
\dottedline(80.00,37.85)(90.00,32.08){1}
\dottedline(100.00,37.85)(90.00,32.08){1}
\dottedline(100.00,37.85)(110.00,32.08){1}
\dottedline(110.00,21.54)(110.00,32.08){1}
\dottedline(110.00,21.54)(100.00,15.77){1}
\dottedline(100.00,10.00)(100.00,15.77){1}
\dottedline(210.00,10.00)(210.00,15.77){1}
\dottedline(220.00,21.54)(210.00,15.77){1}
\dottedline(230.00,15.77)(220.00,21.54){1}
\dottedline(240.00,21.54)(230.00,15.77){1}
\dottedline(250.00,15.77)(240.00,21.54){1}
\dottedline(250.00,15.77)(250.00,10.00){1}
\end{picture}
\end{center}

In this case we see that the perimeter of $H$ decreases by $3$ and the number of vertices of the hexagonal envelope 
by $6$. 

Since all the above changes agree, by linearity, with the desired formula, the claim of the lemma follows by induction.
\end{proof}

Hexagonal envelopes of t-hexagons seem to be exactly the {\em hollow hexagons} considered in \cite{CBCBB,CBC}
(the latter papers do not really have any mathematically precise definition of hollow hexagons).

\subsection{Characters of t-hexagons}\label{s55.3}

We would like to encode t-hexagons using vectors with non-negative integral coordinates. 
For this we will need some notation. Denote by $v_1$, $v_2$ and $v_3$ the vectors in the
Euclidean plane as shown in Figure~\ref{fign3}. Note that all these vectors have length one
and that $v_1+v_2+v_3=0$. In Figure~\ref{fign3} we also see a $*$-marked t-hexagon which is
the intersection of the tiling strips formed by thick lines. The tiling lines which bound
the tiling strips are marked by numbers  $1,2,3,4,5,6$ which correspond to going along the
perimeter of the hexagon starting from the bottom side and going into the clockwise direction.

\begin{figure}
{\fontsize{5pt}{1pt}
\begin{picture}(210.00,150.00)
\thinlines
\drawline(00.00,00.00)(200.00,00.00)
\drawline(00.00,34.64)(200.00,34.64)
\drawline(00.00,69.28)(200.00,69.28)
\drawline(00.00,103.92)(200.00,103.92)
\drawline(00.00,138.56)(200.00,138.56)
\drawline(10.00,17.32)(190.00,17.32)
\drawline(10.00,51.96)(190.00,51.96)
\drawline(10.00,86.60)(190.00,86.60)
\drawline(10.00,121.24)(190.00,121.24)
\drawline(00.00,34.64)(20.00,00.00)
\drawline(00.00,69.28)(40.00,00.00)
\drawline(00.00,103.92)(60.00,00.00)
\drawline(00.00,138.56)(80.00,00.00)
\drawline(20.00,138.56)(100.00,00.00)
\drawline(40.00,138.56)(120.00,00.00)
\drawline(60.00,138.56)(140.00,00.00)
\drawline(80.00,138.56)(160.00,00.00)
\drawline(100.00,138.56)(180.00,00.00)
\drawline(120.00,138.56)(200.00,00.00)
\drawline(140.00,138.56)(200.00,34.64)
\drawline(160.00,138.56)(200.00,69.28)
\drawline(180.00,138.56)(200.00,103.92)
\drawline(180.00,00.00)(200.00,34.64)
\drawline(160.00,00.00)(200.00,69.28)
\drawline(140.00,00.00)(200.00,103.92)
\drawline(120.00,00.00)(200.00,138.56)
\drawline(100.00,00.00)(180.00,138.56)
\drawline(80.00,00.00)(160.00,138.56)
\drawline(60.00,00.00)(140.00,138.56)
\drawline(40.00,00.00)(120.00,138.56)
\drawline(20.00,00.00)(100.00,138.56)
\drawline(00.00,00.00)(80.00,138.56)
\drawline(00.00,34.64)(60.00,138.56)
\drawline(00.00,69.28)(40.00,138.56)
\drawline(00.00,103.92)(20.00,138.56)
\put(00.00,00.00){\makebox(0,0)[cc]{$\bullet$}} 
\put(20.00,00.00){\makebox(0,0)[cc]{$\bullet$}} 
\put(40.00,00.00){\makebox(0,0)[cc]{$\bullet$}} 
\put(60.00,00.00){\makebox(0,0)[cc]{$\bullet$}} 
\put(80.00,00.00){\makebox(0,0)[cc]{$\bullet$}} 
\put(100.00,00.00){\makebox(0,0)[cc]{$\bullet$}} 
\put(120.00,00.00){\makebox(0,0)[cc]{$\bullet$}} 
\put(140.00,00.00){\makebox(0,0)[cc]{$\bullet$}} 
\put(160.00,00.00){\makebox(0,0)[cc]{$\bullet$}} 
\put(180.00,00.00){\makebox(0,0)[cc]{$\bullet$}} 
\put(200.00,00.00){\makebox(0,0)[cc]{$\bullet$}} 
\put(10.00,17.32){\makebox(0,0)[cc]{$\bullet$}} 
\put(30.00,17.32){\makebox(0,0)[cc]{$\bullet$}} 
\put(50.00,17.32){\makebox(0,0)[cc]{$\bullet$}} 
\put(70.00,17.32){\makebox(0,0)[cc]{$\bullet$}} 
\put(90.00,17.32){\makebox(0,0)[cc]{$\bullet$}} 
\put(110.00,17.32){\makebox(0,0)[cc]{$\bullet$}} 
\put(130.00,17.32){\makebox(0,0)[cc]{$\bullet$}} 
\put(150.00,17.32){\makebox(0,0)[cc]{$\bullet$}} 
\put(170.00,17.32){\makebox(0,0)[cc]{$\bullet$}} 
\put(190.00,17.32){\makebox(0,0)[cc]{$\bullet$}} 
\put(00.00,34.64){\makebox(0,0)[cc]{$\bullet$}} 
\put(20.00,34.64){\makebox(0,0)[cc]{$\bullet$}} 
\put(40.00,34.64){\makebox(0,0)[cc]{$\bullet$}} 
\put(60.00,34.64){\makebox(0,0)[cc]{$\bullet$}} 
\put(80.00,34.64){\makebox(0,0)[cc]{$\bullet$}} 
\put(100.00,34.64){\makebox(0,0)[cc]{$\bullet$}} 
\put(120.00,34.64){\makebox(0,0)[cc]{$\bullet$}} 
\put(140.00,34.64){\makebox(0,0)[cc]{$\bullet$}} 
\put(160.00,34.64){\makebox(0,0)[cc]{$\bullet$}} 
\put(180.00,34.64){\makebox(0,0)[cc]{$\bullet$}} 
\put(200.00,34.64){\makebox(0,0)[cc]{$\bullet$}} 
\put(10.00,51.96){\makebox(0,0)[cc]{$\bullet$}} 
\put(30.00,51.96){\makebox(0,0)[cc]{$\bullet$}} 
\put(50.00,51.96){\makebox(0,0)[cc]{$\bullet$}} 
\put(70.00,51.96){\makebox(0,0)[cc]{$\bullet$}} 
\put(90.00,51.96){\makebox(0,0)[cc]{$\bullet$}} 
\put(110.00,51.96){\makebox(0,0)[cc]{$\bullet$}} 
\put(130.00,51.96){\makebox(0,0)[cc]{$\bullet$}} 
\put(150.00,51.96){\makebox(0,0)[cc]{$\bullet$}} 
\put(170.00,51.96){\makebox(0,0)[cc]{$\bullet$}} 
\put(190.00,51.96){\makebox(0,0)[cc]{$\bullet$}} 
\put(00.00,69.28){\makebox(0,0)[cc]{$\bullet$}} 
\put(20.00,69.28){\makebox(0,0)[cc]{$\bullet$}} 
\put(40.00,69.28){\makebox(0,0)[cc]{$\bullet$}} 
\put(60.00,69.28){\makebox(0,0)[cc]{$\bullet$}} 
\put(80.00,69.28){\makebox(0,0)[cc]{$\bullet$}} 
\put(100.00,69.28){\makebox(0,0)[cc]{$\bullet$}} 
\put(120.00,69.28){\makebox(0,0)[cc]{$\bullet$}} 
\put(140.00,69.28){\makebox(0,0)[cc]{$\bullet$}} 
\put(160.00,69.28){\makebox(0,0)[cc]{$\bullet$}} 
\put(180.00,69.28){\makebox(0,0)[cc]{$\bullet$}} 
\put(200.00,69.28){\makebox(0,0)[cc]{$\bullet$}} 
\put(10.00,86.60){\makebox(0,0)[cc]{$\bullet$}} 
\put(30.00,86.60){\makebox(0,0)[cc]{$\bullet$}} 
\put(50.00,86.60){\makebox(0,0)[cc]{$\bullet$}} 
\put(70.00,86.60){\makebox(0,0)[cc]{$\bullet$}} 
\put(90.00,86.60){\makebox(0,0)[cc]{$\bullet$}} 
\put(110.00,86.60){\makebox(0,0)[cc]{$\bullet$}} 
\put(130.00,86.60){\makebox(0,0)[cc]{$\bullet$}} 
\put(150.00,86.60){\makebox(0,0)[cc]{$\bullet$}} 
\put(170.00,86.60){\makebox(0,0)[cc]{$\bullet$}} 
\put(190.00,86.60){\makebox(0,0)[cc]{$\bullet$}} 
\put(00.00,103.92){\makebox(0,0)[cc]{$\bullet$}} 
\put(20.00,103.92){\makebox(0,0)[cc]{$\bullet$}} 
\put(40.00,103.92){\makebox(0,0)[cc]{$\bullet$}} 
\put(60.00,103.92){\makebox(0,0)[cc]{$\bullet$}} 
\put(80.00,103.92){\makebox(0,0)[cc]{$\bullet$}} 
\put(100.00,103.92){\makebox(0,0)[cc]{$\bullet$}} 
\put(120.00,103.92){\makebox(0,0)[cc]{$\bullet$}} 
\put(140.00,103.92){\makebox(0,0)[cc]{$\bullet$}} 
\put(160.00,103.92){\makebox(0,0)[cc]{$\bullet$}} 
\put(180.00,103.92){\makebox(0,0)[cc]{$\bullet$}} 
\put(200.00,103.92){\makebox(0,0)[cc]{$\bullet$}} 
\put(10.00,121.24){\makebox(0,0)[cc]{$\bullet$}} 
\put(30.00,121.24){\makebox(0,0)[cc]{$\bullet$}} 
\put(50.00,121.24){\makebox(0,0)[cc]{$\bullet$}} 
\put(70.00,121.24){\makebox(0,0)[cc]{$\bullet$}} 
\put(90.00,121.24){\makebox(0,0)[cc]{$\bullet$}} 
\put(110.00,121.24){\makebox(0,0)[cc]{$\bullet$}} 
\put(130.00,121.24){\makebox(0,0)[cc]{$\bullet$}} 
\put(150.00,121.24){\makebox(0,0)[cc]{$\bullet$}} 
\put(170.00,121.24){\makebox(0,0)[cc]{$\bullet$}} 
\put(190.00,121.24){\makebox(0,0)[cc]{$\bullet$}} 
\put(00.00,138.56){\makebox(0,0)[cc]{$\bullet$}} 
\put(20.00,138.56){\makebox(0,0)[cc]{$\bullet$}} 
\put(40.00,138.56){\makebox(0,0)[cc]{$\bullet$}} 
\put(60.00,138.56){\makebox(0,0)[cc]{$\bullet$}} 
\put(80.00,138.56){\makebox(0,0)[cc]{$\bullet$}} 
\put(100.00,138.56){\makebox(0,0)[cc]{$\bullet$}} 
\put(120.00,138.56){\makebox(0,0)[cc]{$\bullet$}} 
\put(140.00,138.56){\makebox(0,0)[cc]{$\bullet$}} 
\put(160.00,138.56){\makebox(0,0)[cc]{$\bullet$}} 
\put(180.00,138.56){\makebox(0,0)[cc]{$\bullet$}} 
\put(200.00,138.56){\makebox(0,0)[cc]{$\bullet$}} 
\linethickness{2pt}
\drawline(50.00,17.32)(30.00,17.32)
\drawline(34.00,18.32)(30.00,17.32)
\drawline(34.00,16.32)(30.00,17.32)
\drawline(40.00,34.64)(30.00,17.32)
\drawline(40.00,34.64)(37.00,31.70)
\drawline(40.00,34.64)(38.70,30.50)
\drawline(40.00,34.64)(50.00,17.32)
\drawline(47.50,19.90)(50.00,17.32)
\drawline(48.90,20.90)(50.00,17.32)
\put(40.00,12.90){\makebox(0,0)[cc]{$v_1$}} 
\put(31.00,28.00){\makebox(0,0)[cc]{$v_2$}} 
\put(49.00,28.00){\makebox(0,0)[cc]{$v_3$}} 
\linethickness{2pt}
\drawline(00.00,69.28)(200.00,69.28)
\drawline(00.00,103.92)(200.00,103.92)
\drawline(160.00,138.56)(80.00,00.00)
\drawline(200.00,138.56)(120.00,00.00)
\drawline(100.00,138.56)(180.00,00.00)
\drawline(140.00,138.56)(200.00,34.64)
\put(205.00,69.28){\makebox(0,0)[cc]{$1$}} 
\put(205.00,103.92){\makebox(0,0)[cc]{$4$}} 
\put(100.00,145.00){\makebox(0,0)[cc]{$2$}} 
\put(160.00,145.00){\makebox(0,0)[cc]{$3$}} 
\put(140.00,145.00){\makebox(0,0)[cc]{$5$}} 
\put(200.00,145.00){\makebox(0,0)[cc]{$6$}} 
\put(150.00,75.05){\makebox(0,0)[cc]{$*$}} 
\put(150.00,98.14){\makebox(0,0)[cc]{$*$}} 
\put(140.00,80.82){\makebox(0,0)[cc]{$*$}} 
\put(160.00,80.82){\makebox(0,0)[cc]{$*$}} 
\put(140.00,92.36){\makebox(0,0)[cc]{$*$}} 
\put(160.00,92.36){\makebox(0,0)[cc]{$*$}} 
\end{picture}
}
\caption{Basic vectors and lines}\label{fign3} 
\end{figure}
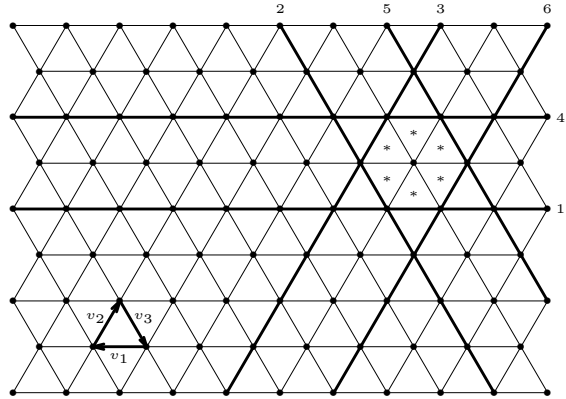

Let now $H$ be a nonempty t-hexagon given as the intersection of three tiling strips, one for each type.
Without loss of generality we may assume that each tiling line which bounds each of these tiling
strips has a non-empty intersection with $H$. We number the tiling lines forming the boundaries 
of the tiling strips in the same way as in Figure~\ref{fign3}. Note that, if two tiling lines 
coincide, we still count them as two different lines in our numbering. This corresponds to 
walking along the boundary of $H$, starting with the bottom side, first along $v_1$, then along
$-v_3$, then along $v_2$, then along $-v_1$, then along $v_3$ and, finally, along $-v_2$.

The intersection of a boundary tiling line of a tiling strip with $H$ is then either a vertex 
or a side of $H$. We denote by $\chi(H)$ the vector $(a_1,a_2,a_3,a_4,a_5,a_6)$ where for 
$i=1,2,3,4,5,6$ the number $a_i$ is the length of the intersection of the line $i$ with $H$.
For example, for the $*$-marked t-hexagon in Figure~\ref{fign3} we have $\chi(H)=(1,1,1,1,1,1)$,
while for the thick t-hexagon in Figure~\ref{fign2} we have $\chi(H)=(3,0,2,2,1,1)$.
The vector $\chi(H)$ will be called the {\em character} of $H$.

One could also give a description of $\chi(H)$ as follows: Start with the rightmost vertex on the 
bottom edge of $H$. Walk along $v_1$ until the next vertex (which might coincide with the starting one).
The number $a_1$ is the length of this walk. Continue along $-v_3$ to record $a_2$, then along $v_2$ 
to record $a_3$ and so on in the order described above.

The original action of $\Delta(3,3,3)$ induces an action on the set of characters of t-hexagons which
is generated by the cyclic permutations of components of the character and the flip
\begin{displaymath}
(a_1,a_2,a_3,a_4,a_5,a_6)\mapsto (a_6,a_5,a_4,a_3,a_2,a_1). 
\end{displaymath}
Using this action, we can change $H$ to an isomorphic t-hexagon $H'$ such that we have 
$\chi(H')=(a_1,a_2,a_3,a_4,a_5,a_6)$ where the following conditions are satisfied:
\begin{equation}\label{eqn1}
a_1+a_3+a_5\leq a_2+a_4+a_6\quad\text{ and }\quad a_1\geq a_3\geq a_5. 
\end{equation}
Such $H'$ as well as its character will be called {\em distinguished}.
It is easy to see that a distinguished representative in the isomorphism class of $H$ is unique up to shift of tiling.
As an example, the regular hexagon in Figure~\ref{fign3} is distinguished, while the t-hexagon in
Figure~\ref{fign3} is not distinguished since the first inequality in \eqref{eqn1} fails.

Note that our walk along the perimeter of $H$ always returns to the original point. 
From this it follows that $(a_1,a_2,a_3,a_4,a_5,a_6)\in\mathbb{Z}_{\geq 0}^6$ is the character of
some t-hexagon if and only if 
\begin{equation}\label{eqn2}
(a_1-a_4)v_1+(a_5-a_2)v_3+(a_3-a_6)v_2=0.
\end{equation}
Taking into account $v_3=-v_1-v_2$ and linear independence of $v_1$ and $v_2$, Equation~\eqref{eqn2} is
equivalent to 
\begin{equation}\label{eqn3}
a_1+a_2-a_4-a_5=0 \quad\text{ and }\quad  a_2+a_3-a_5-a_6=0.
\end{equation}

\begin{lemma}\label{lemn17}
Let $H$ be a distinguished t-hexagon and $\chi(H)=(a_1,a_2,a_3,a_4,a_5,a_6)$. Then we have
\begin{displaymath}
a_1\leq a_4, \quad a_5\leq a_2\quad\text{ and }\quad  a_3\leq a_6.
\end{displaymath} 
\end{lemma}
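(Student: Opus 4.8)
The plan is to read all three comparisons off a single integer extracted from the closure relations \eqref{eqn3}, whose sign is then fixed by the first distinguishing inequality in \eqref{eqn1}. First I would rewrite the two equations of \eqref{eqn3}: the relation $a_1+a_2-a_4-a_5=0$ says $a_1-a_4=a_5-a_2$, and $a_2+a_3-a_5-a_6=0$ says $a_3-a_6=a_5-a_2$. Hence the three opposite-side differences $a_1-a_4$, $a_3-a_6$ and $a_5-a_2$ all coincide; let $c\in\mathbb{Z}$ denote their common value. This step uses nothing beyond the fact that $(a_1,\dots,a_6)$ is the character of an honest t-hexagon, i.e.\ the closure condition \eqref{eqn2}.

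Next I would pin down the sign of $c$. Adding the three identities gives $3c=(a_1-a_4)+(a_3-a_6)+(a_5-a_2)=(a_1+a_3+a_5)-(a_2+a_4+a_6)$, and the right-hand side is $\le 0$ by the first inequality of \eqref{eqn1}, which is exactly the first defining condition of a distinguished hexagon. Hence $c\le 0$, and substituting back yields $a_1=a_4+c\le a_4$ and $a_3=a_6+c\le a_6$, together with $a_5=a_2+c\le a_2$ for the middle pair. Thus the three opposite-side comparisons are $a_1\le a_4$, $a_3\le a_6$, and $a_2\ge a_5$.

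I expect no genuine obstacle here: once \eqref{eqn3} is recognized as asserting that the three opposite-side differences share one common value $c$, the rest is linear bookkeeping, and the second distinguishing condition $a_1\ge a_3\ge a_5$ is not even needed for this statement. The only point deserving real care is the orientation of the differences, since it is the sign of $c$ that drives each conclusion: taking them as $a_1-a_4$, $a_3-a_6$, $a_5-a_2$ makes their sum precisely the distinguished quantity $(a_1+a_3+a_5)-(a_2+a_4+a_6)$, and it is this grouping that forces $c\le 0$ and hence $a_1\le a_4$, $a_3\le a_6$, and $a_2\ge a_5$.
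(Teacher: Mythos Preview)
Your argument is exactly the paper's: derive from \eqref{eqn3} that $a_1-a_4=a_5-a_2=a_3-a_6$, then use the first inequality in \eqref{eqn1} to force the common value to be nonpositive. Your introduction of the auxiliary constant $c$ just makes explicit what the paper leaves as ``three different obvious ways''.

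One point worth flagging: your computation yields $a_5\le a_2$, i.e.\ $a_2\ge a_5$, for the middle pair, whereas the lemma as printed asserts $a_2\le a_5$. Your derivation is correct and the printed inequality is a misprint; the three equal differences are $a_1-a_4$, $a_3-a_6$, and $a_5-a_2$ (not $a_2-a_5$), so the conclusion that matches $a_1\le a_4$ and $a_3\le a_6$ is $a_5\le a_2$. This reversed form is also what is actually needed (and consistent with nonnegativity of the defect $a_2+a_4+a_6-a_1-a_3-a_5$) in the subsequent applications of the lemma.
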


\begin{proof}
From Equation~\eqref{eqn3} we have $a_1-a_4=a_5-a_2=a_3-a_6$. Plugging 
$a_1-a_4=a_5-a_2$ into the first inequality in \eqref{eqn1} implies $\quad  a_3\leq a_6$.
Plugging  $a_5-a_2=a_3-a_6$ into the first inequality in \eqref{eqn1} implies $a_1\leq a_4$.
Plugging  $a_1-a_4=a_3-a_6$ into the first inequality in \eqref{eqn1} implies $a_5\leq a_2$.
\end{proof}

\subsection{Elementary operations on distinguished t-hexagons}\label{s55.4}

Let $H$ be a distinguished t-hexagon and $\chi(H)=(a_1,a_2,a_3,a_4,a_5,a_6)$. 
We consider four {\em elementary} operations on hexagons.

{\bf Operation $\Phi$.} Assume $a_1-a_3\geq 2$. Then, using Lemma~\ref{lemn17}, 
it is easy to check that the vector 
\begin{displaymath}
(a_1-1,a_2,a_3+1,a_4-1,a_5,a_6+1) 
\end{displaymath}
satisfies all conditions in \eqref{eqn1} and \eqref{eqn3} and hence is the character of a unique
distinguished t-hexagon which we denote by $\Phi(H)$. Here is an example of this operation:

\begin{center}
\begin{picture}(250.00,70.00)
\put(20.00,10.00){\makebox(0,0)[cc]{$\bullet$}}  
\put(40.00,10.00){\makebox(0,0)[cc]{$\bullet$}}  
\put(60.00,10.00){\makebox(0,0)[cc]{$\bullet$}}  
\put(80.00,10.00){\makebox(0,0)[cc]{$\bullet$}}  
\put(10.00,27.32){\makebox(0,0)[cc]{$\bullet$}}  
\put(90.00,27.32){\makebox(0,0)[cc]{$\bullet$}}  
\put(20.00,44.64){\makebox(0,0)[cc]{$\bullet$}}  
\put(40.00,44.64){\makebox(0,0)[cc]{$\bullet$}}  
\put(60.00,44.64){\makebox(0,0)[cc]{$\bullet$}}  
\put(80.00,44.64){\makebox(0,0)[cc]{$\bullet$}}  
\put(160.00,10.00){\makebox(0,0)[cc]{$\bullet$}}  
\put(180.00,10.00){\makebox(0,0)[cc]{$\bullet$}}  
\put(200.00,10.00){\makebox(0,0)[cc]{$\bullet$}}  
\put(150.00,27.32){\makebox(0,0)[cc]{$\bullet$}}  
\put(210.00,27.32){\makebox(0,0)[cc]{$\bullet$}}  
\put(160.00,44.64){\makebox(0,0)[cc]{$\bullet$}}  
\put(220.00,44.64){\makebox(0,0)[cc]{$\bullet$}}  
\put(170.00,61.96){\makebox(0,0)[cc]{$\bullet$}}  
\put(190.00,61.96){\makebox(0,0)[cc]{$\bullet$}}  
\put(210.00,61.96){\makebox(0,0)[cc]{$\bullet$}}  
\put(120.00,30.00){\makebox(0,0)[cc]{$\overset{\Phi}{\rightarrow}$}} 
\linethickness{1pt}
\drawline(20.00,10.00)(80.00,10.00)
\drawline(90.00,27.32)(80.00,10.00)
\drawline(90.00,27.32)(80.00,44.64)
\drawline(20.00,44.64)(80.00,44.64)
\drawline(20.00,44.64)(10.00,27.32)
\drawline(20.00,10.00)(10.00,27.32)
\drawline(160.00,10.00)(200.00,10.00)
\drawline(220.00,44.64)(200.00,10.00)
\drawline(220.00,44.64)(210.00,61.96)
\drawline(170.00,61.96)(210.00,61.96)
\drawline(170.00,61.96)(150.00,27.32)
\drawline(160.00,10.00)(150.00,27.32)
\linethickness{1pt}
\dottedline(20.00,44.64)(40.00,10.00){1}
\dottedline(40.00,44.64)(60.00,10.00){1}
\dottedline(60.00,44.64)(80.00,10.00){1}
\dottedline(40.00,44.64)(20.00,10.00){1}
\dottedline(60.00,44.64)(40.00,10.00){1}
\dottedline(80.00,44.64)(60.00,10.00){1}
\dottedline(10.00,27.32)(90.00,27.32){1}
\dottedline(160.00,10.00)(190.00,61.96){1}
\dottedline(180.00,10.00)(210.00,61.96){1}
\dottedline(210.00,27.32)(190.00,61.96){1}
\dottedline(200.00,10.00)(170.00,61.96){1}
\dottedline(210.00,27.32)(150.00,27.32){1}
\dottedline(160.00,44.64)(220.00,44.64){1}
\dottedline(180.00,10.00)(160.00,44.64){1}
\end{picture}
\end{center}

{\bf Operation $\Psi$.} Assume $a_1>a_3>a_5$. Then, using Lemma~\ref{lemn17}, it is easy to check that the vector 
\begin{displaymath}
(a_1-1,a_2+1,a_3,a_4-1,a_5+1,a_6) 
\end{displaymath}
satisfies all conditions in \eqref{eqn1} and \eqref{eqn3} and hence is the character of a unique
distinguished t-hexagon which we denote by $\Psi(H)$. 
Here is an example of this operation:

\begin{center}
\begin{picture}(250.00,80.00)
\put(20.00,10.00){\makebox(0,0)[cc]{$\bullet$}}  
\put(40.00,10.00){\makebox(0,0)[cc]{$\bullet$}}  
\put(60.00,10.00){\makebox(0,0)[cc]{$\bullet$}}  
\put(80.00,10.00){\makebox(0,0)[cc]{$\bullet$}}  
\put(10.00,27.32){\makebox(0,0)[cc]{$\bullet$}} 
\put(90.00,27.32){\makebox(0,0)[cc]{$\bullet$}}  
\put(20.00,44.64){\makebox(0,0)[cc]{$\bullet$}}  
\put(100.00,44.64){\makebox(0,0)[cc]{$\bullet$}}  
\put(30.00,61.96){\makebox(0,0)[cc]{$\bullet$}}  
\put(50.00,61.96){\makebox(0,0)[cc]{$\bullet$}}  
\put(70.00,61.96){\makebox(0,0)[cc]{$\bullet$}}  
\put(90.00,61.96){\makebox(0,0)[cc]{$\bullet$}}  
\put(160.00,44.64){\makebox(0,0)[cc]{$\bullet$}}  
\put(170.00,27.32){\makebox(0,0)[cc]{$\bullet$}}  
\put(170.00,61.96){\makebox(0,0)[cc]{$\bullet$}}  
\put(180.00,10.00){\makebox(0,0)[cc]{$\bullet$}}  
\put(180.00,79.28){\makebox(0,0)[cc]{$\bullet$}}  
\put(200.00,10.00){\makebox(0,0)[cc]{$\bullet$}}  
\put(200.00,79.28){\makebox(0,0)[cc]{$\bullet$}}  
\put(220.00,10.00){\makebox(0,0)[cc]{$\bullet$}}  
\put(220.00,79.28){\makebox(0,0)[cc]{$\bullet$}}  
\put(230.00,27.32){\makebox(0,0)[cc]{$\bullet$}}  
\put(230.00,61.96){\makebox(0,0)[cc]{$\bullet$}}  
\put(240.00,44.64){\makebox(0,0)[cc]{$\bullet$}}  
\put(130.00,50.00){\makebox(0,0)[cc]{$\overset{\Psi}{\rightarrow}$}} 
\linethickness{1pt}
\drawline(20.00,10.00)(80.00,10.00)
\drawline(100.00,44.64)(80.00,10.00)
\drawline(100.00,44.64)(90.00,61.96)
\drawline(30.00,61.96)(90.00,61.96)
\drawline(30.00,61.96)(10.00,27.32)
\drawline(20.00,10.00)(10.00,27.32)
\drawline(160.00,44.64)(180.00,10.00)
\drawline(220.00,10.00)(180.00,10.00)
\drawline(220.00,10.00)(240.00,44.64)
\drawline(220.00,79.28)(240.00,44.64)
\drawline(220.00,79.28)(180.00,79.28)
\drawline(160.00,44.64)(180.00,79.28)
\linethickness{1pt}
\dottedline(20.00,44.64)(40.00,10.00){1}
\dottedline(30.00,61.96)(60.00,10.00){1}
\dottedline(50.00,61.96)(80.00,10.00){1}
\dottedline(70.00,61.96)(90.00,27.32){1}
\dottedline(50.00,61.96)(20.00,10.00){1}
\dottedline(70.00,61.96)(40.00,10.00){1}
\dottedline(90.00,61.96)(60.00,10.00){1}
\dottedline(10.00,27.32)(90.00,27.32){1}
\dottedline(20.00,44.64)(100.00,44.64){1}
\dottedline(170.00,27.32)(200.00,79.28){1}
\dottedline(180.00,10.00)(220.00,79.28){1}
\dottedline(200.00,10.00)(230.00,61.96){1}
\dottedline(220.00,10.00)(180.00,79.28){1}
\dottedline(200.00,10.00)(170.00,61.96){1}
\dottedline(230.00,27.32)(170.00,27.32){1}
\dottedline(230.00,61.96)(170.00,61.96){1}
\dottedline(160.00,44.64)(240.00,44.64){1}
\dottedline(230.00,27.32)(200.00,79.28){1}
\end{picture}
\end{center}

{\bf Operation $\Theta$.} Assume $a_5>0$. Then, using Lemma~\ref{lemn17}, it is easy to check that the vector 
\begin{displaymath}
(a_1-1,a_2+1,a_3-1,a_4+1,a_5-1,a_6+1) 
\end{displaymath}
satisfies all conditions in \eqref{eqn1} and \eqref{eqn3} and hence is the character of a unique
distinguished t-hexagon which we denote by $\Theta(H)$. 
Here is an example of this operation:

\begin{center}
\begin{picture}(250.00,80.00)
\put(10.00,44.64){\makebox(0,0)[cc]{$\bullet$}}  
\put(20.00,27.32){\makebox(0,0)[cc]{$\bullet$}}  
\put(20.00,61.96){\makebox(0,0)[cc]{$\bullet$}}  
\put(30.00,10.00){\makebox(0,0)[cc]{$\bullet$}}  
\put(30.00,79.28){\makebox(0,0)[cc]{$\bullet$}}  
\put(50.00,10.00){\makebox(0,0)[cc]{$\bullet$}}  
\put(50.00,79.28){\makebox(0,0)[cc]{$\bullet$}}  
\put(70.00,10.00){\makebox(0,0)[cc]{$\bullet$}}  
\put(70.00,79.28){\makebox(0,0)[cc]{$\bullet$}}  
\put(80.00,27.32){\makebox(0,0)[cc]{$\bullet$}}  
\put(80.00,61.96){\makebox(0,0)[cc]{$\bullet$}}  
\put(90.00,44.64){\makebox(0,0)[cc]{$\bullet$}}  
\put(150.00,61.96){\makebox(0,0)[cc]{$\bullet$}}  
\put(160.00,79.28){\makebox(0,0)[cc]{$\bullet$}}  
\put(160.00,44.64){\makebox(0,0)[cc]{$\bullet$}}  
\put(170.00,27.32){\makebox(0,0)[cc]{$\bullet$}}  
\put(180.00,79.28){\makebox(0,0)[cc]{$\bullet$}}  
\put(180.00,10.00){\makebox(0,0)[cc]{$\bullet$}}  
\put(200.00,79.28){\makebox(0,0)[cc]{$\bullet$}}  
\put(200.00,10.00){\makebox(0,0)[cc]{$\bullet$}}  
\put(210.00,27.32){\makebox(0,0)[cc]{$\bullet$}}  
\put(220.00,79.28){\makebox(0,0)[cc]{$\bullet$}}  
\put(220.00,44.64){\makebox(0,0)[cc]{$\bullet$}}  
\put(230.00,61.96){\makebox(0,0)[cc]{$\bullet$}}  
\put(120.00,50.00){\makebox(0,0)[cc]{$\overset{\Theta}{\rightarrow}$}} 
\linethickness{1pt}
\drawline(10.00,44.64)(30.00,10.00)
\drawline(70.00,10.00)(30.00,10.00)
\drawline(70.00,10.00)(90.00,44.64)
\drawline(70.00,79.28)(90.00,44.64)
\drawline(70.00,79.28)(30.00,79.28)
\drawline(10.00,44.64)(30.00,79.28)
\drawline(180.00,10.00)(200.00,10.00)
\drawline(230.00,61.96)(200.00,10.00)
\drawline(230.00,61.96)(220.00,79.28)
\drawline(160.00,79.28)(220.00,79.28)
\drawline(160.00,79.28)(150.00,61.96)
\drawline(180.00,10.00)(150.00,61.96)
\linethickness{1pt}
\dottedline(20.00,27.32)(50.00,79.28){1}
\dottedline(30.00,10.00)(70.00,79.28){1}
\dottedline(50.00,10.00)(80.00,61.96){1}
\dottedline(70.00,10.00)(30.00,79.28){1}
\dottedline(50.00,10.00)(20.00,61.96){1}
\dottedline(80.00,27.32)(20.00,27.32){1}
\dottedline(80.00,61.96)(20.00,61.96){1}
\dottedline(10.00,44.64)(90.00,44.64){1}
\dottedline(80.00,27.32)(50.00,79.28){1}
\dottedline(160.00,79.28)(200.00,10.00){1}
\dottedline(180.00,79.28)(210.00,27.32){1}
\dottedline(200.00,79.28)(220.00,44.64){1}
\dottedline(180.00,79.28)(160.00,44.64){1}
\dottedline(200.00,79.28)(170.00,27.32){1}
\dottedline(220.00,79.28)(180.00,10.00){1}
\dottedline(230.00,61.96)(150.00,61.96){1}
\dottedline(220.00,44.64)(160.00,44.64){1}
\dottedline(210.00,27.32)(170.00,27.32){1}
\end{picture}
\end{center}

{\bf Operation $\Lambda$.} Assume $a_3-a_5\geq 2$. Then, using Lemma~\ref{lemn17}, it is easy to check that the vector 
\begin{displaymath}
(a_1-1,a_2+2,a_3-2,a_4+1,a_5,a_6) 
\end{displaymath}
satisfies all conditions in \eqref{eqn1} and \eqref{eqn3} and hence is the character of a unique
distinguished t-hexagon which we denote by $\Lambda(H)$. 
Here is an example of this operation:

\begin{center}
\begin{picture}(300.00,80.00)
\put(10.00,44.64){\makebox(0,0)[cc]{$\bullet$}}  
\put(20.00,27.32){\makebox(0,0)[cc]{$\bullet$}}  
\put(20.00,61.96){\makebox(0,0)[cc]{$\bullet$}}  
\put(30.00,10.00){\makebox(0,0)[cc]{$\bullet$}}  
\put(30.00,79.28){\makebox(0,0)[cc]{$\bullet$}}  
\put(50.00,10.00){\makebox(0,0)[cc]{$\bullet$}}  
\put(50.00,79.28){\makebox(0,0)[cc]{$\bullet$}}  
\put(70.00,10.00){\makebox(0,0)[cc]{$\bullet$}}  
\put(70.00,79.28){\makebox(0,0)[cc]{$\bullet$}}  
\put(80.00,27.32){\makebox(0,0)[cc]{$\bullet$}}  
\put(90.00,79.28){\makebox(0,0)[cc]{$\bullet$}}    
\put(90.00,44.64){\makebox(0,0)[cc]{$\bullet$}}  
\put(100.00,61.96){\makebox(0,0)[cc]{$\bullet$}}  
\put(110.00,79.28){\makebox(0,0)[cc]{$\bullet$}}  
\put(170.00,79.28){\makebox(0,0)[cc]{$\bullet$}}  
\put(180.00,61.96){\makebox(0,0)[cc]{$\bullet$}}  
\put(190.00,79.28){\makebox(0,0)[cc]{$\bullet$}}  
\put(190.00,44.64){\makebox(0,0)[cc]{$\bullet$}}  
\put(200.00,27.32){\makebox(0,0)[cc]{$\bullet$}}  
\put(210.00,10.00){\makebox(0,0)[cc]{$\bullet$}}  
\put(210.00,79.28){\makebox(0,0)[cc]{$\bullet$}}  
\put(230.00,10.00){\makebox(0,0)[cc]{$\bullet$}}  
\put(230.00,79.28){\makebox(0,0)[cc]{$\bullet$}}  
\put(240.00,27.32){\makebox(0,0)[cc]{$\bullet$}}  
\put(250.00,44.64){\makebox(0,0)[cc]{$\bullet$}}  
\put(250.00,79.28){\makebox(0,0)[cc]{$\bullet$}}  
\put(260.00,61.96){\makebox(0,0)[cc]{$\bullet$}}  
\put(270.00,79.28){\makebox(0,0)[cc]{$\bullet$}}  
\put(140.00,50.00){\makebox(0,0)[cc]{$\overset{\Lambda}{\rightarrow}$}} 
\linethickness{1pt}
\drawline(10.00,44.64)(30.00,10.00)
\drawline(70.00,10.00)(30.00,10.00)
\drawline(70.00,10.00)(110.00,79.28)
\drawline(110.00,79.28)(30.00,79.28)
\drawline(10.00,44.64)(30.00,79.28)
\drawline(210.00,10.00)(230.00,10.00)
\drawline(270.00,79.28)(230.00,10.00)
\drawline(270.00,79.28)(170.00,79.28)
\drawline(210.00,10.00)(170.00,79.28)
\linethickness{1pt}
\dottedline(20.00,27.32)(50.00,79.28){1}
\dottedline(30.00,10.00)(70.00,79.28){1}
\dottedline(50.00,10.00)(90.00,79.28){1}
\dottedline(70.00,10.00)(30.00,79.28){1}
\dottedline(50.00,10.00)(20.00,61.96){1}
\dottedline(80.00,27.32)(20.00,27.32){1}
\dottedline(100.00,61.96)(20.00,61.96){1}
\dottedline(10.00,44.64)(90.00,44.64){1}
\dottedline(80.00,27.32)(50.00,79.28){1}
\dottedline(90.00,79.28)(100.00,61.96){1}
\dottedline(70.00,79.28)(90.00,44.64){1}
\dottedline(230.00,10.00)(190.00,79.28){1}
\dottedline(240.00,27.32)(210.00,79.28){1}
\dottedline(250.00,44.64)(230.00,79.28){1}
\dottedline(260.00,61.96)(250.00,79.28){1}
\dottedline(180.00,61.96)(190.00,79.28){1}
\dottedline(190.00,44.64)(210.00,79.28){1}
\dottedline(200.00,27.32)(230.00,79.28){1}
\dottedline(210.00,10.00)(250.00,79.28){1}
\dottedline(200.00,27.32)(240.00,27.32){1}
\dottedline(190.00,44.64)(250.00,44.64){1}
\dottedline(180.00,61.96)(260.00,61.96){1}
\end{picture}
\end{center}

Directly from the definitions it is easy to see that all maps 
$\Phi$, $\Psi$, $\Theta$ and $\Lambda$ do not change the perimeter.
As illustrated by the examples,
all these maps have rather transparent geometric interpretations which could be obtained
by moving the boundary tiling lines of the tiling strips which define the original t-hexagon.

\subsection{Signature and defect}\label{s55.5}

Let $H$ be a distinguished t-hexagon. Assume that $\chi(H)=(a_1,a_2,a_3,a_4,a_5,a_6)$. 
Then the  vector $\mathrm{sign}(H):=(a_1-a_3,a_3-a_5,a_5)\in\mathbb{Z}_{\geq 0}^3$ will be called
the {\em signature} of $H$.
For example, the regular hexagon in Figure~\ref{fign3} has signature $(0,0,1)$ while a
distinguished t-hexagon isomorphic to the t-hexagon in Figure~\ref{fign2} has signature $(1,1,0)$.
Directly from the definitions one computes that for any distinguished t-hexagon $H$ we have:
\begin{equation}\label{eqn5}
\begin{array}{rcl}
\mathrm{sign}(\Phi(H))&=&\mathrm{sign}(H)+(-2,1,0),\\
\mathrm{sign}(\Psi(H))&=&\mathrm{sign}(H)+(-1,-1,1),\\
\mathrm{sign}(\Theta(H))&=&\mathrm{sign}(H)+(0,0,-1),\\
\mathrm{sign}(\Lambda(H))&=&\mathrm{sign}(H)+(1,-2,0),\\
\end{array}
\end{equation}
provided that the t-hexagons $\Phi(H)$, $\Psi(H)$, $\Theta(H)$ or, respectively, $\Lambda(H)$, are defined.

We define the {\em defect} of $H$ as
\begin{displaymath}
\mathrm{def}(H):=a_2+a_4+a_6-a_1-a_3-a_5 
\end{displaymath}
and note that the defect of a distinguished t-hexagon is always non-negative.

\subsection{The number of t-hexagons}\label{s55.9}

Our main result in this section is the following statement which gives a direct connection between
the present paper and \cite{CBCBB,CBC}.

\begin{theorem}\label{thmn72}
For all $n\in\mathbb{Z}_{\geq 0}$, mapping $H$ to $\mathrm{sign}(H)$ induces a bijection between the set of
isomorphism classes of distinguished t-hexagons of perimeter $2n$ and the set $I(n\mathbf{e}(1))$.
\end{theorem}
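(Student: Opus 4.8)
The plan is to factor the claimed bijection through distinguished characters and a clean description of the ideal. Since every t-hexagon has a distinguished representative which is unique up to a shift of the tiling, and the character records the six ordered side lengths (a shift invariant), isomorphism classes of t-hexagons of perimeter $2n$ correspond bijectively to distinguished characters $(a_1,\dots,a_6)$ with $a_1+\dots+a_6=2n$. So it suffices to show that $\mathrm{sign}$ restricts to a bijection from such distinguished characters onto $I(n\mathbf{e}(1))$. The unifying device is the functional $\phi(\mathbf{v}):=v_1+2v_2+3v_3$. One checks $\phi(-2,1,0)=\phi(-1,-1,1)=0$ and $\phi(1,-2,0)=\phi(0,0,-1)=-3$, so along any covering chain from $(n,0,0)$ the value of $\phi$ can only stay $\le n$ and remains $\equiv n \pmod 3$; hence $I(n\mathbf{e}(1))\subseteq \Sigma_n:=\{\mathbf{v}\in\Lambda_3:\phi(\mathbf{v})\le n,\ \phi(\mathbf{v})\equiv n \!\!\pmod 3\}$. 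Conversely, for distinguished $H$ with $\chi(H)=(a_1,\dots,a_6)$ we have $\mathrm{sign}(H)\in\Lambda_3$ (as $a_1\ge a_3\ge a_5\ge 0$) and $\phi(\mathrm{sign}(H))=a_1+a_3+a_5$; since the perimeter is $(a_1+a_3+a_5)+(a_2+a_4+a_6)=2n$ and $\mathrm{def}(H)\ge0$, this forces $\phi(\mathrm{sign}(H))\le n$, and writing $\delta:=a_5-a_2=a_1-a_4=a_3-a_6$ from \eqref{eqn3} gives $2\phi(\mathrm{sign}(H))=2n+3\delta$, so $\delta$ is even and $\phi(\mathrm{sign}(H))\equiv n\pmod 3$. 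Thus $\mathrm{sign}$ maps distinguished characters of perimeter $2n$ into $\Sigma_n$, and I note $\mathrm{ht}(\mathrm{sign}(H))=a_1$, matching the rank function of $\cP_{3}$.

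Next I would produce an explicit two-sided inverse to get injectivity and surjectivity onto $\Sigma_n$. Given $\mathbf{v}\in\Sigma_n$, set $a_1=v_1+v_2+v_3$, $a_3=v_2+v_3$, $a_5=v_3$, then $a_2=\tfrac13(2n-2a_1-2a_3+a_5)$, $a_4=a_1+a_2-a_5$, $a_6=a_2+a_3-a_5$. The congruence $\phi(\mathbf{v})\equiv n$ makes $a_2$ integral; $\phi(\mathbf{v})=a_1+a_3+a_5\le n$ gives $a_2\ge a_5\ge0$, hence $a_4,a_6\ge0$; relations \eqref{eqn3} hold by construction and the side lengths sum to $2n$. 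Since $a_1\ge a_3\ge a_5$ and $\mathrm{def}=3(a_2-a_5)\ge0$, this tuple is a distinguished character of perimeter $2n$ with signature $\mathbf{v}$, so $\mathrm{sign}$ is onto $\Sigma_n$. Injectivity is the same computation read backwards: $a_1,a_3,a_5$ are recovered from the signature, and then $a_2$ (hence $a_4,a_6$) from the perimeter together with \eqref{eqn3}, so a distinguished character of perimeter $2n$ is determined by its signature.

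It then remains to prove the characterization $\Sigma_n\subseteq I(n\mathbf{e}(1))$. Given $\mathbf{v}\in\Sigma_n$ with $m:=\phi(\mathbf{v})$, I would put $a=\tfrac13(2n-2v_1-v_2-3v_3)$, $b=\tfrac13(n-m)$, $c=v_3$. Integrality of $a,b$ follows from $m\equiv n$, while $b\ge0$ follows from $m\le n$ and $2n-2v_1-v_2-3v_3\ge 2m-2v_1-v_2-3v_3=3v_2+3v_3$ gives $a\ge v_2+v_3\ge0$; a direct check yields
\[
\mathbf{v}=(n,0,0)+a(-2,1,0)+b(1,-2,0)+c(-1,-1,1).
\]
Hence $\mathbf{v}\prec'(n,0,0)$ in $\mathbb{Z}^3$ (or $\mathbf{v}=(n,0,0)$ if $a=b=c=0$), and since $\mathbf{v},(n,0,0)\in\Lambda_3$, Proposition~\ref{prop2} gives $\mathbf{v}\in I(n\mathbf{e}(1))$. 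Combined with the easy inclusion this gives $\Sigma_n=I(n\mathbf{e}(1))$, and the theorem follows.

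The hard part is exactly this last surjectivity $\Sigma_n\subseteq I(n\mathbf{e}(1))$: establishing that every lattice point under the $\phi$-threshold with the correct residue genuinely lies below $(n,0,0)$ in $\cP_{3}$. The explicit representation settles it, but the delicate bookkeeping is the nonnegativity estimate $a\ge v_2+v_3$ and the mod-$3$ integrality, which are precisely where the constraints $m\le n$ and $m\equiv n$ are consumed. I would present this step conceptually via the operations of Subsection~\ref{s55.4}: by \eqref{eqn5} the maps $\Phi,\Psi,\Theta,\Lambda$ shift $\mathrm{sign}$ by the four vectors of $X_3$ while preserving perimeter and distinguishedness, and each is applicable to $H$ exactly when the corresponding $X_3$-translate of $\mathrm{sign}(H)$ stays in $\Lambda_3$ (using Lemma~\ref{lemn17}). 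Consequently any descending covering chain from $(n,0,0)$ to $\mathbf{v}$ in $\cP_{3}$ lifts, starting from the degenerate hexagon with character $(n,0,0,n,0,0)$, to a sequence of elementary operations terminating at a distinguished t-hexagon of signature $\mathbf{v}$ — a geometric reproof of surjectivity that simultaneously explains the role of the operations.
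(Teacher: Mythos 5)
Your proof is correct, but it takes a genuinely different route from the paper's. The paper argues entirely inside the hexagon combinatorics: it first checks, via \eqref{eqn5} and the applicability conditions of $\Phi,\Psi,\Theta,\Lambda$ (each matching exactly one vector of $X_3$ and applicable precisely when the translated signature stays in $\Lambda_3$), that the signatures of all hexagons reachable from $Q$ with $\chi(Q)=(n,0,0,n,0,0)$ form exactly $I(n\mathbf{e}(1))$; it then shows every distinguished t-hexagon of perimeter $2n$ reduces to $Q$ by successively killing $a_5$, then $a_3$, then the defect (so only $\Phi,\Psi,\Theta$ are needed), which gives that the image of $\mathrm{sign}$ lies in $I(n\mathbf{e}(1))$; injectivity comes from solving for $a_2,a_4,a_6$ from the signature, the perimeter and \eqref{eqn3} --- a computation essentially identical to your recovery of the character. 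You instead prove a closed semilinear characterization $I(n\mathbf{e}(1))=\Sigma_n=\{\mathbf{v}\in\Lambda_3:\phi(\mathbf{v})\leq n,\ \phi(\mathbf{v})\equiv n\ (\mathrm{mod}\ 3)\}$ with $\phi(\mathbf{v})=v_1+2v_2+3v_3$, using that $\phi$ changes by $0$ or $-3$ along covers for one inclusion, and for the other the explicit representation $\mathbf{v}=(n,0,0)+a(-2,1,0)+b(1,-2,0)+c(-1,-1,1)$ with $a,b,c\in\mathbb{Z}_{\geq 0}$ (your integrality and nonnegativity estimates check out) together with Proposition~\ref{prop2}; you then write an explicit two-sided inverse to $\mathrm{sign}$ on distinguished characters. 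I verified your formulas: $\phi(\mathrm{sign}(H))=a_1+a_3+a_5$, $2\phi=2n+3\delta$ with $\delta=a_1-a_4=a_5-a_2=a_3-a_6$, and the inverse tuple satisfies \eqref{eqn1} and \eqref{eqn3} via Lemma~\ref{lemn17}-type inequalities. The trade-off: the paper's argument is self-contained in the hexagon picture and motivates the elementary operations (its reduction steps also feed Corollary~\ref{corn74}), whereas yours imports the nontrivial Proposition~\ref{prop2} (much as the paper does for Proposition~\ref{prop5}) but buys an explicit description of the ideal that immediately re-derives Corollary~\ref{corn74}, since $\mathbf{v}\mapsto(v_1+v_2+v_3,v_2+v_3,v_3)$ identifies $\Sigma_n$ with partitions of $n,n-3,n-6,\dots$ into at most three parts. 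One small point to make explicit if you write this up: the bijection between isomorphism classes and distinguished characters rests on the paper's (asserted) uniqueness of the distinguished representative up to shift, plus the fact that \eqref{eqn3} characterizes realizable characters; both are also used implicitly in the paper's own injectivity step, so this is a shared, not an extra, assumption.
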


\begin{proof}
Consider a distinguished t-hexagon $Q$ having the character $(n,0,0,n,0,0)$. The signature of this t-hexagon is
$(n,0,0)=n\mathbf{e}(1)$. Applying, whenever possible, a sequence of operations 
$\Phi$, $\Psi$, $\Theta$ and $\Lambda$ to $Q$, produces a set of t-hexagons of perimeter $2n$.
From \eqref{eqn5} it follows that the set of signatures for all t-hexagons which can be obtained in this way is contained in $I(n\mathbf{e}(1))$. Observe that the operation $\Phi$ is defined
as soon as $a_1-a_3\geq 2$ and that this condition is equivalent to the fact that 
adding $(-2,1,0)$ to the signature of the input t-hexagon gives a vector with non-negative
coordinates. Put differently, if we have a t-hexagon such that the sum of its signature
and $(-2,1,0)$ has non-negative coordinates, then $\Phi$ can be applied to this t-hexagon.
Similar observations also apply to $\Psi$ and $(-1,-1,1)$,
to $\Theta$ and $(0,0,-1)$ and to $\Lambda$ and $(1,-2,0)$. 
Consequently, the set of signatures for all t-hexagons which can be obtained from $Q$
by all possible sequences of $\Phi$, $\Psi$, $\Theta$ and $\Lambda$ coincides with 
$I(n\mathbf{e}(1))$.

Our next step is to show that each distinguished t-hexagon of perimeter $2n$ can be obtained from
$Q$ using a sequence of operations of the form $\Phi$, $\Psi$, $\Theta$ and $\Lambda$
(in fact, the first three would suffice).
Let $K$ be a distinguished t-hexagon of perimeter $2n$ with character $(a_1,a_2,a_3,a_4,a_5,a_6)$. Assume $a_5>0$. Then, 
using Lemma~\ref{lemn17}, it is easy to check that the vector 
\begin{displaymath}
(a_1+1,a_2-1,a_3,a_4+1,a_5-1,a_6) 
\end{displaymath}
satisfies all conditions in \eqref{eqn1} and \eqref{eqn3} and hence is the character of a unique
distinguished t-hexagon. Therefore $K=\Psi(K')$ for some distinguished t-hexagon $K'$ and the character
of $K'$ has a smaller fifth coordinate. In particular, $K$ is obtained, using a sequence of $\Psi$'s, 
from some distinguished t-hexagon $K'$ the character of which has zero fifth coordinate.
 
Let $K$ be a distinguished t-hexagon of perimeter $2n$ with character $(a_1,a_2,a_3,a_4,a_5,a_6)$. Assume 
that $a_5=0$ and $a_3>0$. Then, 
using Lemma~\ref{lemn17}, it is easy to check that the vector 
\begin{displaymath}
(a_1+1,a_2,a_3-1,a_4+1,a_5,a_6-1) 
\end{displaymath}
satisfies all conditions in \eqref{eqn1} and \eqref{eqn3} and hence is the character of a unique
distinguished t-hexagon. Therefore $K=\Phi(K')$ for some distinguished t-hexagon $K'$ and the character
of $K'$ has a smaller third coordinate. In particular, $K$ is obtained, using a sequence of  $\Phi$'s and $\Psi$'s, 
from some distinguished t-hexagon $K'$ the character of which has zero third and fifth coordinates.

Let $K$ be a distinguished t-hexagon of perimeter $2n$ with character $(a_1,a_2,0,a_4,0,a_6)$. Then  $a_4\geq a_1$ by 
Lemma~\ref{lemn17}. If $a_4=a_1$, then from \eqref{eqn3} it follows that $a_2=a_6=0$ and
$K=Q$. If $a_4>a_1$, then from \eqref{eqn3} it follows that $a_2=a_6=a_4-a_1>0$. 
If $a_4>a_1+1$, then, using Lemma~\ref{lemn17}, it is easy to check that the vector 
\begin{displaymath}
(b_1,b_2,b_3,b_4,b_5,b_6):=(a_1+1,a_2-1,a_3+1,a_4-1,a_5+1,a_6-1)  
\end{displaymath}
satisfies all conditions in \eqref{eqn1} and \eqref{eqn3} and hence is the character of a unique
distinguished t-hexagon, say $K'$. Note that, by construction, $\mathrm{def}(K')<\mathrm{def}(K)$
and that $K=\Theta(K')$. Finally, if $a_4=a_1+1$, then from \eqref{eqn3}
it follows that the character of $K$ is of the form $(x,1,0,x+1,0,1)$, for some $x$.
In this case the perimeter of $K$ is $2x+3$, which is an odd number, contradicting our assumptions.
Therefore the case $a_4=a_1+1$ cannot occur.

Using induction on defect and the above steps it follows that any distinguished t-hexagon
of perimeter $2n$ is obtained using $\Phi$, $\Psi$ and $\Theta$ from a distinguished 
t-hexagon of perimeter $2n$ with character $(a_1,0,0,a_4,0,0)$. But from 
Equation~\ref{eqn3} it thus follows that $a_1=a_4=n$ and hence  the latter t-hexagon must be
isomorphic to $Q$. 

As a consequence of the above argument, we have that the image of the signature map is contained in 
$I(n\mathbf{e}(1))$. So, it remains to show that the signature map is injective.

Let $K$ be a distinguished t-hexagon with signature $(x,y,z)$ and of perimeter $2n$. Then the
character of $K$ equals $(x+y+z,a_2,y+z,a_4,z,a_6)$ for some $a_2,a_4,a_6\in\mathbb{Z}_{\geq 0}$. 
From \eqref{eqn3}, we have
\begin{displaymath}
x+y+z-a_4=z-a_2=y+z-a_6. 
\end{displaymath}
Since the perimeter of $K$ is $2n$, we also have
\begin{displaymath}
x+y+z+a_2+y+z+a_4+z+a_6=2n 
\end{displaymath}
and hence $a_2$,  $a_4$ and $a_6$ are uniquely determined. This means that the character of $K$
is uniquely determined and thus $K$ is uniquely determined up to isomorphism by its signature. 
This completes the proof.
\end{proof}

As an immediate corollary from Theorem~\ref{thmn72} we have:

\begin{corollary}\label{corn73}
For all $n\in\mathbb{Z}_{\geq 0}$, $C_{n}^{(3)}$ is the number of
isomorphism  classes of $t$-hexagons with perimeter $2n$. 
\end{corollary}

Our proof of Theorem~\ref{thmn72} provides another connection to 
the sequence $A001399(n)$ giving the number of partitions of $n$ in at most
three parts which was already mentioned in Subsection~\ref{s3.4}.
Let $P_n$ denote the set of all partitions of $n$ in at most three parts.
If $n<0$, we set $P_n=\varnothing$.

\begin{corollary}\label{corn74}
Let $n\in\mathbb{Z}_{\geq 0}$. Mapping $H$ with $\chi(H)=(a_1,a_2,a_3,a_4,a_5,a_6)$ 
to $(a_1,a_3,a_5)$ induces a bijection between the set of isomorphism classes of distinguished 
t-hexagons of perimeter $2n$ and the set $P_{n}\cup P_{n-3}\cup P_{n-6}\cup\dots$.
In particular, we have
\begin{displaymath}
C_n^{(3)}=A001399(n)+A001399(n-3)+A001399(n-6)+\dots.
\end{displaymath}
\end{corollary}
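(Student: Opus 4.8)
The plan is to deduce this from Theorem~\ref{thmn72} by composing the signature bijection with a standard reindexing of partitions. First I would record the elementary bijection
\[
\beta:\mathbb{Z}_{\geq 0}^3\to\{\text{partitions into at most three parts}\},\qquad \beta(x,y,z)=(x+y+z,\,y+z,\,z),
\]
whose inverse sends a partition $(\lambda_1,\lambda_2,\lambda_3)$ with $\lambda_1\geq\lambda_2\geq\lambda_3\geq 0$ to $(\lambda_1-\lambda_2,\lambda_2-\lambda_3,\lambda_3)$, and which satisfies $|\beta(x,y,z)|=x+2y+3z$. Since a distinguished t-hexagon $H$ with $\chi(H)=(a_1,a_2,a_3,a_4,a_5,a_6)$ has $a_1\geq a_3\geq a_5\geq 0$ by \eqref{eqn1}, the triple $(a_1,a_3,a_5)$ is a genuine partition, and from the definition of the signature we get $(a_1,a_3,a_5)=\beta(\mathrm{sign}(H))$. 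Thus the map in the statement is $\beta\circ\mathrm{sign}$. By Theorem~\ref{thmn72}, $\mathrm{sign}$ is a bijection from isomorphism classes of distinguished t-hexagons of perimeter $2n$ onto $I(n\mathbf{e}(1))$; as $\beta$ is injective, $\beta\circ\mathrm{sign}$ is a bijection onto $\beta(I(n\mathbf{e}(1)))$. It therefore remains to prove the purely combinatorial identity $\beta(I(n\mathbf{e}(1)))=P_n\cup P_{n-3}\cup P_{n-6}\cup\dots$.

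The key is an explicit description of $I(n\mathbf{e}(1))$. I would use the linear functional $L(v_1,v_2,v_3)=v_1+2v_2+3v_3$, noting that $L$ vanishes on $(-2,1,0)$ and $(-1,-1,1)$ and equals $-3$ on $(1,-2,0)$ and $(0,0,-1)$, so $L$ takes a value in $\{0,-3\}$ on every element of $X_3$; moreover $L(\beta(x,y,z))=x+2y+3z$ is exactly the size of the corresponding partition. The claim is
\[
I(n\mathbf{e}(1))=\{(x,y,z)\in\mathbb{Z}_{\geq 0}^3 : x+2y+3z\leq n\ \text{ and }\ x+2y+3z\equiv n\pmod 3\}.
\]
For the inclusion $\subseteq$, any $(x,y,z)\in I(n\mathbf{e}(1))$ satisfies $(x,y,z)\preceq(n,0,0)$, so by Proposition~\ref{prop2} the difference $(x-n,y,z)$ is a non-negative integer combination of the elements of $X_3$. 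Applying $L$ and using $L(n,0,0)=n$ gives $x+2y+3z-n=-3s$ for some $s\in\mathbb{Z}_{\geq 0}$, which yields both conditions.

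For the reverse inclusion $\supseteq$, given $(x,y,z)$ with $m:=x+2y+3z\leq n$ and $m\equiv n\pmod 3$, I would exhibit the concrete decomposition
\[
(x-n,y,z)=r\,(1,-2,0)+(p+r)\,(-2,1,0)+z\,(-1,-1,1),
\]
where $r=\tfrac{n-m}{3}$ and $p=\tfrac{n-x+y}{3}$. The congruence $m\equiv n\pmod 3$ forces $r,p\in\mathbb{Z}$; one has $r\geq 0$ from $m\leq n$, while $p+r=\tfrac{2n-2x-y-3z}{3}\geq 0$ follows from $2x+y+3z=x+m-y\leq x+m\leq 2n$ (using $x\leq m\leq n$). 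Hence $(x-n,y,z)$ is a non-negative integer combination of elements of $X_3$, so $(x,y,z)\preceq(n,0,0)$ by Proposition~\ref{prop2}, i.e.\ $(x,y,z)\in I(n\mathbf{e}(1))$. Combining the two inclusions and applying $\beta$, which maps each level $\{x+2y+3z=m\}$ bijectively onto $P_m$, identifies $\beta(I(n\mathbf{e}(1)))$ with $P_n\cup P_{n-3}\cup P_{n-6}\cup\dots$; taking cardinalities gives $C_n^{(3)}=A001399(n)+A001399(n-3)+A001399(n-6)+\dots$.

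The main obstacle is the inclusion $\supseteq$: one must show that every partition of size $\leq n$ and congruent to $n$ modulo $3$ is actually realized as a signature inside the principal ideal, which is a reachability statement in the poset. The crux is producing the explicit non-negative decomposition above and verifying its coefficients, the only non-trivial point being $p+r\geq 0$ — equivalently $2x+y+3z\leq 2n$ — which ultimately reduces to the bound $x\leq n$.
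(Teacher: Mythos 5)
Your proof is correct, but it takes a genuinely different route from the paper's. The paper's own proof is a one-liner: it stratifies the bijection of Theorem~\ref{thmn72} by defect, using (implicitly, via \eqref{eqn3}, which forces $a_1-a_4=a_5-a_2=a_3-a_6$ and hence $\mathrm{def}(H)\in 6\mathbb{Z}$) that a distinguished t-hexagon of perimeter $2n$ and defect $2i$ has $a_1+a_3+a_5=n-i$, so that $\mathrm{sign}(H)=(x,y,z)\mapsto (x+y+z,y+z,z)$ carries the defect-$2i$ stratum onto $P_{n-i}$; the surjectivity onto $P_{n-i}$ is left implicit in the machinery of the proof of Theorem~\ref{thmn72}. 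You instead bypass the defect entirely and prove the explicit closed description
\begin{displaymath}
I(n\mathbf{e}(1))=\{(x,y,z)\in\mathbb{Z}_{\geq 0}^3\,:\, x+2y+3z\leq n \text{ and } x+2y+3z\equiv n \ (\mathrm{mod}\ 3)\},
\end{displaymath}
with the forward inclusion coming from the functional $L=v_1+2v_2+3v_3$ taking values in $\{0,-3\}$ on $X_3$, and the reverse from your explicit decomposition $(x-n,y,z)=r(1,-2,0)+(p+r)(-2,1,0)+z(-1,-1,1)$ combined with Proposition~\ref{prop2}. I verified the decomposition and the coefficient bounds: integrality of $p$ deserves the one-line remark that $3\mid n-m$ gives $n\equiv x+2y$, whence $n-x+y\equiv 3y\equiv 0\ (\mathrm{mod}\ 3)$, and your chain $2x+y+3z=x+m-y\leq x+m\leq 2n$ is sound since $x\leq m\leq n$. (Two trivial points: the case $(x,y,z)=(n,0,0)$ must be handled by the convention that $I(n\mathbf{e}(1))$ contains its generator, since $\prec$ is strict; and in the forward inclusion Proposition~\ref{prop2} is not needed, the definition of $\prec$ already expresses the difference as a sum of elements of $X_3$.) What your route buys is a self-contained characterization of the ideal that actually fills in the surjectivity the paper leaves to the reader, and which immediately re-derives the $d=3$ generating function of Corollary~\ref{cordpart}; what the paper's route buys is brevity, plus the finer statement that the defect stratification itself matches the decomposition $P_n\cup P_{n-3}\cup P_{n-6}\cup\dots$ piece by piece.
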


\begin{proof}
Restricting the bijection constructed in the proof of Theorem~\ref{thmn72} 
to the set of  distinguished t-hexagons of defect $2i$ and thereafter mapping
$\mathrm{sign}(H)=(x,y,z)$ to the partition $(x+y+z,y+z,z)$ of $x+2y+3z$, 
provides a bijection from the set of distinguished t-hexagons of defect $2i$ to $P_{n-i}$.
\end{proof}

\section{Partitions modulo $d$}\label{s4}

\subsection{Partitions and refinement}\label{s4.1}

For $n\in\mathbb{Z}_{\geq 0}$ denote by $\Pi_n$ the set of all partitions of
$n$, that is the set of all tuples $\boldsymbol{\lambda}=(\lambda_1,\lambda_2,\dots,\lambda_k)$
such that $\lambda_1,\lambda_2,\dots,\lambda_k\in\mathbb{N}$,
$n=\lambda_1+\lambda_2+\dots+\lambda_k$ and $\lambda_1\geq\lambda_2\geq\dots\geq\lambda_k$.
As usual, we write $\boldsymbol{\lambda}\vdash n$ for $\boldsymbol{\lambda}\in \Pi_n$.

For $\boldsymbol{\lambda}=(\lambda_1,\lambda_2,\dots,\lambda_k)\vdash n$ and
$\boldsymbol{\mu}=(\mu_1,\mu_2,\dots,\mu_l)\vdash n$ we say that 
$\boldsymbol{\lambda}$ {\em refines } $\boldsymbol{\mu}$ and write 
$\boldsymbol{\mu}< \boldsymbol{\lambda}$ provided that $l<k$ and there is a partition
$J_1\cup J_2\cup \dots\cup J_l$ of $\{1,2,\dots,k\}$ into a disjoint union of
non-empty subsets such that 
\begin{displaymath}
\mu_i=\sum_{j\in J_i}\lambda_j,\quad\text{ for all }i=1,2,\dots,l.
\end{displaymath}
The partially ordered set $(\Pi_n,<)$ was studied in \cite{Bi,Bj,Zi}. In particular,
in \cite{Zi} it was shown that it has some nasty properties. We refer the reader to
\cite{Zi} for more details on this poset.

The poset $(\Pi_n,<)$ is graded with respect to the rank function 
$(\lambda_1,\lambda_2,\dots,\lambda_k)\mapsto k$.

\subsection{Partitions modulo $d$}\label{s4.2}

For $d\in \mathbb{N}$, define an equivalence relation $\sim_d$ on $\Pi_n$ as follows:
Given $\boldsymbol{\lambda}=(\lambda_1,\lambda_2,\dots,\lambda_k)\vdash n$ and
$\boldsymbol{\mu}=(\mu_1,\mu_2,\dots,\mu_l)\vdash n$ set 
$\boldsymbol{\lambda}\sim_d\boldsymbol{\mu}$ provided that $k=l$ and there is
$\pi\in S_k$ such that $d$ divides $\lambda_i-\mu_{\pi(i)}$, for all $i$. In other words,
$\boldsymbol{\lambda}\sim_d\boldsymbol{\mu}$ if and only if the multisets of residues modulo
$d$ for parts of  $\boldsymbol{\lambda}$ and $\boldsymbol{\mu}$ coincide. For
$\boldsymbol{\lambda}\vdash n$ we denote the $\sim_d$-class of $\boldsymbol{\lambda}$
by $\overline{\boldsymbol{\lambda}}^{(d)}$.

Since $\sim_d$-equivalent partitions have the same number of parts
and partitions with the same number of parts are incomparable with respect to the refinement order $<$, 
this order induces a partial order $<_d$ on the set $\Pi_{n,d}:=\Pi_{n}/\sim_d$ defined
as the transitive closure  of the relation $\tilde{<}$ given by
$\overline{\boldsymbol{\lambda}}\tilde{<}\overline{\boldsymbol{\mu}}$ if there are
$\boldsymbol{\lambda}'\in \overline{\boldsymbol{\lambda}}$ and
$\boldsymbol{\mu}'\in \overline{\boldsymbol{\mu}}$ such that $\boldsymbol{\lambda}'<\boldsymbol{\mu}'$.
The poset $\Pi_{n,d}$ inherits from $\Pi_{n}$ the structure of a graded poset.

Define the poset $\Pi^*_{n,d}$ as follows: if $d$ does not divide $n$, set 
$\Pi^*_{n,d}:=\Pi_{n,d}$ with the order  $<_d$; if $d$ divides $n$, define 
$\Pi^*_{n,d}$ as the poset obtained from $(\Pi_{n,d},<_d)$ by adding a minimum
element, denoted $\varnothing$ (for simplicity, we will keep the notation $<_d$ for 
the partial order on $\Pi^*_{n,d}$). The structure of a graded poset on  $\Pi_{n}$
induces the structure of a graded poset on $\Pi^*_{n,d}$ by defining
the degree of 
$\varnothing$ to be zero. The class $\overline{(1,1,\dots,1)}^{(d)}=\{(1,1,\dots,1)\}$ of the partition
$(1,1,\dots,1)$ is the maximum element in $\Pi^*_{n,d}$.

\subsection{$\Pi^*_{n,d}$ versus $\cP_{d}$}\label{s4.3}

Our main result in this section is the following:

\begin{theorem}\label{thm21}
The (graded) posets $(\Pi^*_{n,d},<_d)$ and $(I(n\mathbf{e}(1)),\prec)$ are isomorphic.
\end{theorem}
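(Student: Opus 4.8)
The plan is to write down an explicit residue-counting bijection and then reduce the statement to a matching of covering relations. For a partition $\boldsymbol{\lambda}=(\lambda_1,\dots,\lambda_k)\vdash n$, let $\phi(\boldsymbol{\lambda})=\mathbf{v}$, where $v_i$ is the number of parts $\lambda_j$ with $\lambda_j\equiv i\pmod d$, residues being taken in $\{1,2,\dots,d\}$ (so that parts divisible by $d$ are counted in $v_d$). Then $\phi(\boldsymbol{\lambda})\in\Lambda_d$ depends only on the multiset of residues of the parts, so it is constant on $\sim_d$-classes and descends to $\bar{\phi}\colon\Pi_{n,d}\to\Lambda_d$; since a $\sim_d$-class is precisely this multiset, $\bar{\phi}$ is injective. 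As $\mathrm{ht}(\phi(\boldsymbol{\lambda}))=k$ equals the number of parts, $\bar{\phi}$ is rank preserving, and $\phi((1,1,\dots,1))=n\mathbf{e}(1)$, so the maximum elements correspond. When $d\mid n$ I would extend $\bar\phi$ by $\bar\phi(\varnothing):=\mathbf{0}$, consistently with ranks since $\varnothing$ has rank $0$.

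The combinatorial core is a two-way correspondence between covers. Merging two parts of residues $i$ and $j$ into one part produces a part of residue $k$, the representative of $i+j$ in $\{1,\dots,d\}$, and changes the residue vector by $\mathbf{e}(k)-\mathbf{e}(i)-\mathbf{e}(j)$; since $d\mid k-i-j$ this vector lies in $X_d$, so every single-merge cover in $\Pi_n$ maps to an $X_d$-cover in $\cP_{d}$. By transitivity $\bar\phi$ is order preserving, and since $\boldsymbol{\lambda}\le(1,\dots,1)$ always holds in the refinement order, the image lies inside $I(n\mathbf{e}(1))$. Conversely I need a \emph{lifting} statement: if $\boldsymbol{\lambda}$ realizes $\mathbf{w}$ and $\mathbf{v}=\mathbf{w}+\mathbf{x}\lessdot\mathbf{w}$ with $\mathbf{x}=\mathbf{e}(k)-\mathbf{e}(i)-\mathbf{e}(j)\in X_d$, then $\mathbf{v}\in\Lambda_d$ forces $\mathbf{w}$ to contain the residues needed to perform a merge with effect $\mathbf{x}$, yielding $\boldsymbol{\mu}\lessdot\boldsymbol{\lambda}$ with $\phi(\boldsymbol{\mu})=\mathbf{v}$. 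For every generator of $X_d$ other than $-\mathbf{e}(d)$ this is immediate, since $\mathbf{v}\in\Lambda_d$ directly gives $w_i\ge1,\,w_j\ge1$ (or $w_i\ge2$ if $i=j$).

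Surjectivity then follows by induction along a saturated chain. Given $\mathbf{w}\in I(n\mathbf{e}(1))$ with $\mathbf{w}\ne\mathbf{0}$, choose a chain $n\mathbf{e}(1)=\mathbf{w}_0\gtrdot\mathbf{w}_1\gtrdot\cdots\gtrdot\mathbf{w}_m=\mathbf{w}$; every $\mathbf{w}_t$ has height at least $\mathrm{ht}(\mathbf{w})\ge1$ and so is nonzero, and repeatedly lifting from the partition $(1,\dots,1)$ realizing $\mathbf{w}_0$ produces a partition realizing $\mathbf{w}$. A single subtlety occurs for the generator $\mathbf{x}=-\mathbf{e}(d)$, which admits several expressions $\mathbf{e}(k)-\mathbf{e}(i)-\mathbf{e}(j)$: here one must choose to merge a residue-$d$ part either with a part of some other residue or with a second residue-$d$ part, which is possible exactly when $\mathbf{w}\ne\mathbf{e}(d)$. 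The only genuinely unrealizable step is $\mathbf{e}(d)\gtrdot\mathbf{0}$, so $\mathbf{0}$ — which belongs to $I(n\mathbf{e}(1))$ as the minimum of $\cP_{d,d}$ precisely when $d\mid n$ — is never the image of a genuine partition. This is exactly the element matched with the formally adjoined $\varnothing$, and hence $\bar\phi$ is a rank-preserving bijection $\Pi^*_{n,d}\to I(n\mathbf{e}(1))$.

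To conclude I would check that $\bar\phi$ both preserves and reflects covers. Covers of $\Pi^*_{n,d}$ between genuine classes are, by the grading by number of parts and the definition of the induced order $<_d$, realized by single merges of representatives, hence map to $X_d$-covers; $X_d$-covers between realized vectors lift, by the previous paragraph, to single merges; and the covers incident to $\varnothing$ match because the unique rank-one class is $\overline{(n)}$ with $\phi(\overline{(n)})=\mathbf{e}(d)$ when $d\mid n$ and $\mathbf{0}\lessdot\mathbf{e}(d)$ via $-\mathbf{e}(d)\in X_d$. Since in any finite poset the strict order is the transitive closure of the covering relation, a rank-preserving bijection that both preserves and reflects covers is an isomorphism of graded posets. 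The main obstacle is precisely the surjectivity/lifting step of the third paragraph: one must ensure the inductive construction never runs out of parts to merge except at the single vector $\mathbf{0}$, and that this exceptional vector is correctly identified with $\varnothing$ exactly in the divisible case $d\mid n$; everything else is bookkeeping with residues and ranks.
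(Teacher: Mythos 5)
Your proof is correct and follows essentially the same route as the paper's: the same residue-counting map, injectivity from the residue multiset, order preservation via elementary merges matching generators of $X_d$, and surjectivity by inductively lifting covers down from $n\mathbf{e}(1)$. If anything, your explicit treatment of the generator $-\mathbf{e}(d)$ and of the matching $\varnothing\leftrightarrow\mathbf{0}$ is more careful than the paper's own induction step, which tacitly assumes two distinct parts are available to merge and thus glosses the exceptional cover $\mathbf{0}\lessdot\mathbf{e}(d)$.
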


\begin{proof}
To each $\boldsymbol{\lambda}\vdash n$ we associate the vector 
$(v_1^{\boldsymbol{\lambda}},v_2^{\boldsymbol{\lambda}},\dots,
v_d^{\boldsymbol{\lambda}})$, where, for $i=1,2,\dots,d$, we have
\begin{gather*} 
v_i^{\boldsymbol{\lambda}}:=|\{j\,:\,\lambda_j\equiv i\,\,\mathrm{mod}\, d\}|.
\end{gather*}
This map is constant on the $\sim_d$-equivalence classes and hence induces a
map from $\Pi_{n,d}$ to $\cP_{d}$. We extend this map to $\Pi^*_{n,d}$ by sending the
$\varnothing$ element to the zero vector in case $d$ divides $n$.
Denote the resulting map by $\Phi$. Note that $\Phi$ preserves the degree of
an element, namely, it maps a partition with $k$ parts to a vector of height $k$.

First of all, we claim that $\Phi$ is a homomorphism of posets. Indeed, any refinement
of partitions can be written as a composition of {\em elementary} refinements which 
simply refine one part of a smaller partition into two parts of a bigger partition.
Such elementary refinement corresponds to the covering relation 
$\boldsymbol{\mu}\lessdot\boldsymbol{\lambda}$ where $\boldsymbol{\lambda}$ has $k$
parts while $\boldsymbol{\mu}$ has $k-1$ parts. Assume that this refines the part
$\mu_i$ into parts $\lambda_s$ and $\lambda_t$. This means that 
$\mu_i=\lambda_s+\lambda_t$ and hence 
\begin{displaymath}
\mu_i\equiv \lambda_s+\lambda_t \,\,\mathrm{mod}\,\, d. 
\end{displaymath}
Let $a,b,c\in\{1,2,\dots,d\}$ be such that 
$\mu_i\equiv a$, $\lambda_s\equiv b$ and $\lambda_t\equiv c\,\,\mathrm{mod}\,\, d$. 
Then the element $\mathbf{e}(a)-\mathbf{e}(b)-\mathbf{e}(c)$
belong to $X_d$. This implies that $\Phi(\overline{\boldsymbol{\mu}})\prec \Phi(\overline{\boldsymbol{\lambda}})$.
It follows that $\Phi$ is a homomorphism of posets.

Clearly, $\Phi(\overline{(1,1,\dots,1)}^{(d)})=n\mathbf{e}(1)$. Since
$\overline{(1,1,\dots,1)}^{(d)}$ is the maximum element in $\Pi^*_{n,d}$, it follows that
$\Phi$ maps $\Pi^*_{n,d}$ to $I(n\mathbf{e}(1))$.

That $\Phi:\Pi^*_{n,d}\to I(n\mathbf{e}(1))$ is injective follows directly from the 
definition. It remains to show that $\Phi$ is surjective, in particular, invertible,
and that $\Phi^{-1}$ is order preserving. We prove this by downward
induction on the height $h$. If $h=d$, the claim is clear as 
$n\mathbf{e}(1)$ is the only element of $I(n\mathbf{e}(1))$ of height $h$.

For the induction step  $h\to h-1$ let $\mathbf{v}$ and $\mathbf{w}$ be two elements in
$I(n\mathbf{e}(1))$ of heights $h-1$ and $h$, respectively, and assume 
$\mathbf{v}\prec\mathbf{w}$. Then $\mathbf{v}=\mathbf{w}+\mathbf{x}$, for some
$\mathbf{x}\in X_d$. Let $\mathbf{x}=\mathbf{e}(k)-\mathbf{e}(i)-\mathbf{e}(j)$,
for some $i,j,k\in\{1,2,\dots,d\}$. From the inductive assumption, there is 
$\boldsymbol{\lambda}\vdash n$ such that $\Phi(\overline{\boldsymbol{\lambda}})=\mathbf{w}$.
Let $\lambda_s$ and $\lambda_t$ be two different parts of $\lambda$ with residues
$i$ and $j$ modulo $d$, respectively. Define $\mu$ as the partition obtained from
$\lambda$ by uniting $\lambda_s$ and $\lambda_t$. Then $\Phi(\overline{\boldsymbol{\mu}})=\mathbf{v}$.
Therefore $\Phi$ is a bijection.

From the arguments above it follows that the covering relations in 
$\Pi^*_{n,d}$ to $I(n\mathbf{e}(1))$ match precisely under $\Phi$. This implies
that $\Phi$ is an isomorphism of posets, completing the proof of the theorem.
\end{proof}

As an immediate corollary, we have:

\begin{corollary}\label{cor22}
For $n\in\mathbb{Z}_{\geq 0}$ and $d\in \mathbb{N}$,
we have $|\Pi^*_{n,d}|=C_{n}^{(d)}$.
\end{corollary}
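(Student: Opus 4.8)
The plan is to read off the cardinality statement directly from the poset isomorphism supplied by Theorem~\ref{thm21}, so the argument is essentially a one-line deduction. First I would recall that by definition $C_n^{(d)}=|I(n\mathbf{e}(1))|$, so that the assertion to be proved is precisely $|\Pi^*_{n,d}|=|I(n\mathbf{e}(1))|$. Theorem~\ref{thm21} furnishes an isomorphism of graded posets $\Phi\colon(\Pi^*_{n,d},<_d)\to(I(n\mathbf{e}(1)),\prec)$; in particular $\Phi$ is a bijection of the underlying finite sets. A bijection between finite sets forces them to have the same number of elements, whence $|\Pi^*_{n,d}|=|I(n\mathbf{e}(1))|=C_n^{(d)}$, as required.

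No step here presents any genuine obstacle: all the substantive content — associating to each partition the residue-count vector, checking that this descends to a degree-preserving homomorphism of posets, and verifying both injectivity and surjectivity by downward induction on the height — already lives inside the proof of Theorem~\ref{thm21}. The corollary simply forgets the order structure and extracts the resulting equality of sizes. The only point worth keeping in mind is that all sets involved are finite: each $\Lambda_d^{(h)}$ is finite and $I(n\mathbf{e}(1))$ is a finite principal ideal, while $\Pi^*_{n,d}$ is a quotient of the finite set $\Pi_n$, possibly augmented by the single adjoined minimum $\varnothing$ when $d$ divides $n$. Thus the equality of cardinalities is an honest statement about natural numbers, and the corollary follows immediately.
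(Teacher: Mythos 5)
Your proof is correct and is exactly the paper's argument: the paper presents Corollary~\ref{cor22} as an immediate consequence of Theorem~\ref{thm21}, since the poset isomorphism $\Phi$ is in particular a bijection of finite sets, forcing $|\Pi^*_{n,d}|=|I(n\mathbf{e}(1))|=C_n^{(d)}$. Your added remark on finiteness is a sensible (if routine) precaution, but there is nothing substantively different from the paper's route.
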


\section{Connection to $d$-tonal partition monoid}\label{s5}

\subsection{Partition monoids}\label{s5.1}

For $n\in\mathbb{Z}_{\geq 0}$ consider the sets $\underline{n}=\{1,2,\dots,n\}$ 
and $\underline{n}'=\{1',2',\dots,n'\}$ (these two sets are automatically disjoint).
Set $\mathbf{n}:=\underline{n}\bigcup\underline{n}'$ and consider the set 
$\mathcal{P}(\mathbf{n})$ of all partitions of $\mathbf{n}$ into a disjoint union of
non-empty subsets. The cardinality of $\mathcal{P}(\mathbf{n})$ is the $2n$-th
Bell number, see $A000110$ in \cite{OEIS}.

The set $\mathcal{P}(\mathbf{n})$ has the natural structure of a monoid, see \cite{Jo,Mar,Maz1,Maz}.
The composition $\sigma\circ\pi$ of two partitions $\sigma,\pi\in \mathcal{P}(\mathbf{n})$
is defined as follows (here $\underline{n}''=\{1'',2'',\dots,n''\}$ is disjoint from
$\mathbf{n}$):
\begin{itemize}
\item First consider the partition $\sigma'$ of $\underline{n}'\cup\underline{n}''$ which is 
induced from $\sigma$ via the bijection $\underline{n}\cup\underline{n}'\to \underline{n}'\cup\underline{n}''$ 
which sends $i\mapsto i'$ for $i\in \underline{n}$ and $j'\to j''$
for $j'\in \underline{n}'$.
\item Let $\tilde{\pi}$ be the equivalence relation on $\underline{n}\cup \underline{n}'\cup\underline{n}''$
whose parts are those of $\pi$ combined with singletons of $\underline{n}''$.
\item Let $\tilde{\sigma}$ be the equivalence relation on $\underline{n}\cup \underline{n}'\cup\underline{n}''$
whose parts are those of $\sigma'$ combined with singletons of $\underline{n}$.
\item Let $\tilde{\tau}$ denote the minimal (with respect to inclusions) equivalence relation on the set
$\underline{n}\cup \underline{n}'\cup\underline{n}''$ which contains both $\tilde{\pi}$ and $\tilde{\sigma}$.
\item Let $\tilde{\tau}'$ be the restriction of $\tilde{\tau}$ to $\underline{n}\cup\underline{n}''$.
\item Define $\tau=\sigma\circ\pi$ as the partition of $\underline{n}\cup\underline{n}'$ induced from the partition $\tilde{\tau}'$ by 
the bijection $\underline{n}\cup\underline{n}''\to \underline{n}\cup\underline{n}'$
which sends $i\mapsto i$ for $i\in \underline{n}$ and $j''\to j'$
for $j''\in \underline{n}''$.
\end{itemize}
The identity element in the monoid  $(\mathcal{P}(\mathbf{n}),\circ)$ is the
{\em identity} partition 
\begin{displaymath}
\{\{1,1'\},\{2,2'\},\dots,\{k,k'\}\}\in \mathcal{P}(\mathbf{k}). 
\end{displaymath}
Both elements of $\mathcal{P}(\mathbf{n})$ and the composition $\circ$ admit a
diagrammatic description as shown in Figure~\ref{fig4}
(in the composition $\sigma\circ\pi$ which is the left hand side of 
the equality in Figure~\ref{fig4}, the element $\sigma$ is depicted on the left and the
element $\pi$ on the right). We refer the reader
to  \cite{Jo,Mar,Maz} for further details.

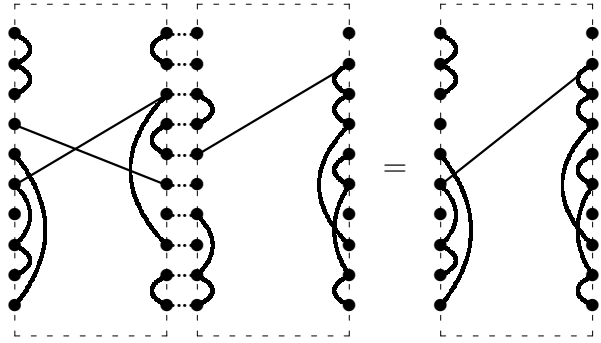
\begin{figure}
\special{em:linewidth 0.4pt} \unitlength 0.80mm
\begin{picture}(105.00,55.00)
\put(67.50,27.50){\makebox(0,0)[cc]{$=$}}
\put(05.00,50.00){\makebox(0,0)[cc]{$\bullet$}}
\put(05.00,45.00){\makebox(0,0)[cc]{$\bullet$}}
\put(05.00,40.00){\makebox(0,0)[cc]{$\bullet$}}
\put(05.00,35.00){\makebox(0,0)[cc]{$\bullet$}}
\put(05.00,30.00){\makebox(0,0)[cc]{$\bullet$}}
\put(05.00,25.00){\makebox(0,0)[cc]{$\bullet$}}
\put(05.00,20.00){\makebox(0,0)[cc]{$\bullet$}}
\put(05.00,15.00){\makebox(0,0)[cc]{$\bullet$}}
\put(05.00,10.00){\makebox(0,0)[cc]{$\bullet$}}
\put(05.00,05.00){\makebox(0,0)[cc]{$\bullet$}}
\put(30.00,50.00){\makebox(0,0)[cc]{$\bullet$}}
\put(30.00,45.00){\makebox(0,0)[cc]{$\bullet$}}
\put(30.00,40.00){\makebox(0,0)[cc]{$\bullet$}}
\put(30.00,35.00){\makebox(0,0)[cc]{$\bullet$}}
\put(30.00,30.00){\makebox(0,0)[cc]{$\bullet$}}
\put(30.00,25.00){\makebox(0,0)[cc]{$\bullet$}}
\put(30.00,20.00){\makebox(0,0)[cc]{$\bullet$}}
\put(30.00,15.00){\makebox(0,0)[cc]{$\bullet$}}
\put(30.00,10.00){\makebox(0,0)[cc]{$\bullet$}}
\put(30.00,05.00){\makebox(0,0)[cc]{$\bullet$}}
\put(35.00,50.00){\makebox(0,0)[cc]{$\bullet$}}
\put(35.00,45.00){\makebox(0,0)[cc]{$\bullet$}}
\put(35.00,40.00){\makebox(0,0)[cc]{$\bullet$}}
\put(35.00,35.00){\makebox(0,0)[cc]{$\bullet$}}
\put(35.00,30.00){\makebox(0,0)[cc]{$\bullet$}}
\put(35.00,25.00){\makebox(0,0)[cc]{$\bullet$}}
\put(35.00,20.00){\makebox(0,0)[cc]{$\bullet$}}
\put(35.00,15.00){\makebox(0,0)[cc]{$\bullet$}}
\put(35.00,10.00){\makebox(0,0)[cc]{$\bullet$}}
\put(35.00,05.00){\makebox(0,0)[cc]{$\bullet$}}
\put(60.00,50.00){\makebox(0,0)[cc]{$\bullet$}}
\put(60.00,45.00){\makebox(0,0)[cc]{$\bullet$}}
\put(60.00,40.00){\makebox(0,0)[cc]{$\bullet$}}
\put(60.00,35.00){\makebox(0,0)[cc]{$\bullet$}}
\put(60.00,30.00){\makebox(0,0)[cc]{$\bullet$}}
\put(60.00,25.00){\makebox(0,0)[cc]{$\bullet$}}
\put(60.00,20.00){\makebox(0,0)[cc]{$\bullet$}}
\put(60.00,15.00){\makebox(0,0)[cc]{$\bullet$}}
\put(60.00,10.00){\makebox(0,0)[cc]{$\bullet$}}
\put(60.00,05.00){\makebox(0,0)[cc]{$\bullet$}}
\put(100.00,50.00){\makebox(0,0)[cc]{$\bullet$}}
\put(100.00,45.00){\makebox(0,0)[cc]{$\bullet$}}
\put(100.00,40.00){\makebox(0,0)[cc]{$\bullet$}}
\put(100.00,35.00){\makebox(0,0)[cc]{$\bullet$}}
\put(100.00,30.00){\makebox(0,0)[cc]{$\bullet$}}
\put(100.00,25.00){\makebox(0,0)[cc]{$\bullet$}}
\put(100.00,20.00){\makebox(0,0)[cc]{$\bullet$}}
\put(100.00,15.00){\makebox(0,0)[cc]{$\bullet$}}
\put(100.00,10.00){\makebox(0,0)[cc]{$\bullet$}}
\put(100.00,05.00){\makebox(0,0)[cc]{$\bullet$}}
\put(75.00,50.00){\makebox(0,0)[cc]{$\bullet$}}
\put(75.00,45.00){\makebox(0,0)[cc]{$\bullet$}}
\put(75.00,40.00){\makebox(0,0)[cc]{$\bullet$}}
\put(75.00,35.00){\makebox(0,0)[cc]{$\bullet$}}
\put(75.00,30.00){\makebox(0,0)[cc]{$\bullet$}}
\put(75.00,25.00){\makebox(0,0)[cc]{$\bullet$}}
\put(75.00,20.00){\makebox(0,0)[cc]{$\bullet$}}
\put(75.00,15.00){\makebox(0,0)[cc]{$\bullet$}}
\put(75.00,10.00){\makebox(0,0)[cc]{$\bullet$}}
\put(75.00,05.00){\makebox(0,0)[cc]{$\bullet$}}
\dashline(05.00,55.00)(30.00,55.00){1}
\dashline(35.00,55.00)(60.00,55.00){1}
\dashline(75.00,55.00)(100.00,55.00){1}
\dashline(05.00,00.00)(30.00,00.00){1}
\dashline(35.00,00.00)(60.00,00.00){1}
\dashline(75.00,00.00)(100.00,00.00){1}
\dashline(75.00,00.00)(75.00,55.00){1}
\dashline(100.00,00.00)(100.00,55.00){1}
\dashline(05.00,00.00)(05.00,55.00){1}
\dashline(35.00,00.00)(35.00,55.00){1}
\dashline(30.00,00.00)(30.00,55.00){1}
\dashline(60.00,00.00)(60.00,55.00){1}
\dottedline(30.00,50.00)(35.00,50.00){1}
\dottedline(30.00,45.00)(35.00,45.00){1}
\dottedline(30.00,40.00)(35.00,40.00){1}
\dottedline(30.00,35.00)(35.00,35.00){1}
\dottedline(30.00,30.00)(35.00,30.00){1}
\dottedline(30.00,25.00)(35.00,25.00){1}
\dottedline(30.00,20.00)(35.00,20.00){1}
\dottedline(30.00,15.00)(35.00,15.00){1}
\dottedline(30.00,10.00)(35.00,10.00){1}
\dottedline(30.00,05.00)(35.00,05.00){1}
\linethickness{0.4mm}
\drawline(05.00,35.00)(30.00,25.00)
\drawline(35.00,30.00)(60.00,45.00)
\drawline(30.00,40.00)(05.00,25.00)
\drawline(75.00,25.00)(100.00,45.00)
\linethickness{1pt}
\qbezier(05,25)(10,20)(05,15)
\qbezier(05,10)(10,12.50)(05,15)
\qbezier(05,05)(15,17.50)(05,30)
\qbezier(75,10)(80,12.50)(75,15)
\qbezier(75,25)(80,20)(75,15)
\qbezier(75,05)(85,17.50)(75,30)
\qbezier(30,50)(25,47.50)(30,45)
\qbezier(05,50)(10,47.50)(05,45)
\qbezier(05,40)(10,42.50)(05,45)
\qbezier(75,50)(80,47.50)(75,45)
\qbezier(75,40)(80,42.50)(75,45)
\qbezier(30,05)(25,07.50)(30,10)
\qbezier(35,05)(40,07.50)(35,10)
\qbezier(35,20)(40,15.00)(35,10)
\qbezier(30,15)(18,27.50)(30,40)
\qbezier(30,35)(25,32.50)(30,30)
\qbezier(35,35)(40,37.50)(35,40)
\qbezier(60,45)(55,42.50)(60,40)
\qbezier(60,35)(55,37.50)(60,40)
\qbezier(60,35)(50,25)(60,15)
\qbezier(100,35)(90,25)(100,15)
\qbezier(100,45)(95,42.50)(100,40)
\qbezier(100,35)(95,37.50)(100,40)
\qbezier(60,05)(55,7.5)(60,10)
\qbezier(100,05)(95,7.5)(100,10)
\qbezier(60,25)(55,27.5)(60,30)
\qbezier(100,25)(95,27.5)(100,30)
\qbezier(60,25)(55,17.5)(60,10)
\qbezier(100,25)(95,17.5)(100,10)
\end{picture}
\caption{Partitions and their composition}
\label{fig4}
\end{figure}

\subsection{$d$-tonal partition monoids}\label{s5.2}

For $d\in\mathbb{N}$, the {\em $d$-tonal partition monoid}
$\mathcal{P}_d(\mathbf{n})$, as introduced in \cite{Ta}, is the submonoid of
$\mathcal{P}(\mathbf{n})$ which consists of all partitions $\sigma$ of $\mathbf{n}$
such that every part $\sigma_i$ of $\sigma$ satisfies the condition that
\begin{displaymath}
d\quad\text{ divides }\quad |\sigma_i\cap\underline{n}|-|\sigma_i\cap\underline{n}'|. 
\end{displaymath}
Thus, for $d=1$ we have $\mathcal{P}_1(\mathbf{n})=\mathcal{P}(\mathbf{n})$.
For $d=2$ the above condition is equivalent to the requirement that all parts of 
$\sigma$ have even cardinality. Therefore $|\mathcal{P}_2(\mathbf{n})|$ is given
by the sequence $A005046$ in \cite{OEIS} (see also \cite{Or2}).

The twisted monoid algebra of the $d$-tonal partition monoid was studied
(under various names) in \cite{Ta,Or}, see also \cite{Ko1,Ko2,Ko3} for related algebras
and \cite[Section~5.1]{Ha} for some recent developments. We record the following open problem:

\begin{problem}\label{problem23}
Compute $|\mathcal{P}_d(\mathbf{n})|$ in a closed form as a function of $d$ and $n$. 
\end{problem}

As the twisted semigroup algebra of $\mathcal{P}_d(\mathbf{n})$ is generically semi-simple,
see \cite{Ta}, and forms, for all $n$, a sequence of embedded algebras with multiplicity-free
restrictions, see \cite{Ko1}, there is a natural analogue of the Robinson-Schensted correspondence for
$\mathcal{P}_d(\mathbf{n})$ and hence Problem~\ref{problem23} admits a combinatorial
reformulation in terms of walks on a certain Bratelli diagram.

\subsection{Rank and $d$-signature}\label{s5.3}

For $\sigma\in \mathcal{P}_d(\mathbf{n})$ the {\em rank} $\mathrm{rank}(\sigma)$
is the number of parts $\sigma_i$ in $\sigma$ such that both
$|\sigma_i\cap\underline{n}|\neq 0$ and
$|\sigma_i\cap\underline{n}'|\neq 0$. Such parts are called {\em propagating}.

Note that, for $\sigma\in \mathcal{P}_d(\mathbf{n})$, the cardinality of any part
of $\sigma$ which is entirely contained in $\underline{n}$ or in $\underline{n}'$
is divisible by $d$.

Define the function $\Psi:\mathcal{P}_d(\mathbf{n})\to\mathbb{Z}_{\geq 0}^d$, called
the {\em $d$-signature} function, as follows: for $\sigma\in \mathcal{P}_d(\mathbf{n})$
define $\Psi(\sigma)=(v_1,v_2,\dots,v_d)$, where for $i=1,2,\dots,d$ the number $v_i$
is the number of parts $\sigma_j$ in $\sigma$ satisfying the conditions
\begin{displaymath}
|\sigma_j\cap\underline{n}|\neq 0,\quad
|\sigma_j\cap\underline{n}'|\neq 0,\quad
d\,\,\text{ divides }\,\, |\sigma_j\cap\underline{n}|-i.
\end{displaymath}
Note that $v_1+v_2+\dots+v_d=\mathrm{rank}(\sigma)$.

\subsection{$\mathcal{J}$-classes of $d$-tonal partition monoids}\label{s5.4}

Two elements $\sigma,\pi\in \mathcal{P}_d(\mathbf{n})$ are called {\em $\mathcal{J}$-equivalent},
written $\sigma\mathcal{J}\pi$, provided that $\mathcal{P}_d(\mathbf{n})\sigma
\mathcal{P}_d(\mathbf{n})=\mathcal{P}_d(\mathbf{n})\pi\mathcal{P}_d(\mathbf{n})$,
see \cite[Section~4.4]{GM}. For $\sigma\in \mathcal{P}_d(\mathbf{n})$ we denote by
$\overline{\sigma}^{\mathcal{J}}$ the $\mathcal{J}$-equivalence class containing
$\sigma$.

There is a natural partial order on the set $\mathcal{P}_d(\mathbf{n})/\mathcal{J}$
given by inclusions: we write $\overline{\sigma}^{\mathcal{J}}
\rightsquigarrow \overline{\pi}^{\mathcal{J}}$
if and only if $\mathcal{P}_d(\mathbf{n})\sigma\mathcal{P}_d(\mathbf{n})
\subset \mathcal{P}_d(\mathbf{n})\pi\mathcal{P}_d(\mathbf{n})$.

\subsection{Canonical elements}\label{s5.5}

An element $\sigma\in \mathcal{P}_d(\mathbf{n})$ will be called {\em canonical} provided
that the following conditions are satisfied:
\begin{itemize}
\item Each part $\sigma_i$ of $\sigma$ satisfies
$|\sigma_i\cap\underline{n}|\leq d$ and $|\sigma_i\cap\underline{n}'|\leq d$.
\item The intersections $\sigma_i\cap\underline{n}$ and $\sigma_i\cap\underline{n}'$
are connected segments of $\underline{n}$ and $\underline{n}'$ respectively ordered by
cardinalities of the intersections for those parts $\sigma_i$ which intersects both 
$\underline{n}$ and $\underline{n}'$ and then followed by those parts of $\sigma$ which
intersect only $\underline{n}$ or $\underline{n}'$.
\end{itemize}
For example, the identity element in $\mathcal{P}_d(\mathbf{n})$ is canonical.

\begin{lemma}\label{lem31}
For  each $\sigma\in \mathcal{P}_d(\mathbf{n})$ there is a canonical 
$\pi\in \mathcal{P}_d(\mathbf{n})$ such that $\sigma\mathcal{J}\pi$.
\end{lemma}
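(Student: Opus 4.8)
The plan is to reduce $\sigma$ to canonical form by a sequence of left and right multiplications inside $\mathcal{P}_d(\mathbf{n})$, chosen so that the whole sequence stays within the single $\mathcal{J}$-class of $\sigma$. Two kinds of moves are used. The \emph{shape} moves come from units: the symmetric group $S_n$ embeds into $\mathcal{P}_d(\mathbf{n})$ as the permutation diagrams (each block $\{i,\pi(i)'\}$ is propagating with intersection difference $0$, hence $d$-tonal), and these are exactly the invertible elements. Left and right multiplication by permutations independently relabels $\underline{n}$ and $\underline{n}'$ and is automatically $\mathcal{J}$-preserving; using this freedom I would make every intersection $\sigma_i\cap\underline{n}$ and $\sigma_i\cap\underline{n}'$ a consecutive segment, sort the propagating parts by the cardinalities of their intersections, and push the non-propagating parts to the end. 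This installs, at no cost in the $\mathcal{J}$-class, all of the connectedness and ordering requirements in the definition of canonical.

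The \emph{size} moves are the substantive ones and must shrink every intersection to cardinality at most $d$. Suppose a block $\sigma_i$ has $|\sigma_i\cap\underline{n}|=a>d$; by the $d$-tonal condition $a\equiv|\sigma_i\cap\underline{n}'|\pmod d$. I would left-multiply $\sigma$ by an element $e\in\mathcal{P}_d(\mathbf{n})$ that is the identity outside a single nontrivial block, chosen so that in the product $e\,\sigma$ exactly $d$ of the top points of $\sigma_i$ are cast out of $\sigma_i$ and regrouped into a new non-propagating top block. As the defining nontrivial block of $e$ has intersection difference $d$, indeed $e\in\mathcal{P}_d(\mathbf{n})$. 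The product lowers $|\sigma_i\cap\underline{n}|$ by $d$, keeps $\sigma_i$ propagating (since $a-d\geq 1$), and preserves the residue of $|\sigma_i\cap\underline{n}|$ modulo $d$; hence $\mathrm{rank}$ and every coordinate of $\Psi$ are unchanged. Treating bottom intersections symmetrically by right multiplication and inducting on $\sum_i\bigl(|\sigma_i\cap\underline{n}|+|\sigma_i\cap\underline{n}'|\bigr)$ drives every propagating block to balanced size equal to its residue in $\{1,\dots,d\}$, and every intersection to at most $d$, producing a canonical $\pi$ with $\Psi(\pi)=\Psi(\sigma)$.

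The main obstacle is to verify that each size move is a genuine $\mathcal{J}$-equivalence and not merely a strict descent in the $\mathcal{J}$-order. The inclusion $\pi\in\mathcal{P}_d(\mathbf{n})\,\sigma\,\mathcal{P}_d(\mathbf{n})$ is immediate from the construction, so the real work is to produce, at each step, elements realizing the reverse inclusion $\sigma\in\mathcal{P}_d(\mathbf{n})\,\pi\,\mathcal{P}_d(\mathbf{n})$; concretely one exhibits an ``expanding'' element that reconnects the split-off block and re-enlarges the reduced intersection, and the point is that this element lies in $\mathcal{P}_d(\mathbf{n})$ precisely because the excised chunk had cardinality divisible by $d$. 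The conserved quantity $\Psi$ (equivalently the rank, which is subadditive under multiplication and hence must stay constant along a chain of mutual ideal inclusions) certifies that no step crosses into a lower $\mathcal{J}$-class, so the forward and reverse inclusions are mutually consistent. The delicate part is carrying out this reverse-inclusion bookkeeping uniformly over propagating and non-propagating blocks and over both sides; once it is in place, the reductions above terminate in a canonical $\pi$ that is $\mathcal{J}$-equivalent to $\sigma$.
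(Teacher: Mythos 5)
Your proposal is correct and follows essentially the same route as the paper: the paper's proof likewise splits off $d$-element subsets from oversized intersections (the two-sided multiplications realizing this splitting and its reversal within the $\mathcal{J}$-class are exactly what Figure~\ref{fig5} depicts, with $d$-tonality of the auxiliary elements guaranteed by the excised chunks having cardinality $d$), and then rearranges by permuting $\underline{n}$ and $\underline{n}'$ independently to reach a canonical element. Your extra certification via the invariance of $\Psi$ and subadditivity of rank is harmless but redundant once both ideal inclusions are exhibited directly.
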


\begin{proof}
If some part $\sigma_i$ of $\sigma$
satisfies $|\sigma_i\cap\underline{n}|>d$ or $|\sigma_i\cap\underline{n}'|>d$,
then there is $\sigma'\in \mathcal{P}_d(\mathbf{n})$ which has exactly the same parts as
$\sigma$ except for $\sigma_i$ which is split into two parts: a part with $d$ elements
which is a subset of  $\underline{n}$ (respectively $\underline{n}'$) 
and its complement. Existence of $\sigma'$ follows
using the construction shown in Figure~\ref{fig5} (in the case $d=3$)
and note that $\sigma\mathcal{J}\sigma'$ also follows from Figure~\ref{fig5}.
Proceeding inductively, we find an element $\tau\in  \mathcal{P}_d(\mathbf{n})$ which is
in the same $\mathcal{J}$-class as $\sigma$ and which satisfies the condition that 
$|\tau_i\cap\underline{n}|\leq d$ and $|\tau_i\cap\underline{n}'|\leq d$ for
each part $\tau_i$ of $\tau$. Permuting, if necessary, the elements of $\underline{n}$ 
and, independently, of $\underline{n}'$ (that is, multiplying from the left 
and/or from the right by permutations, noting that
all permutations belong to $\mathcal{P}_d(\mathbf{n})$), 
one rearranges $\tau$ into a canonical element 
$\pi$ in the same $\mathcal{J}$-class as $\sigma$. The claim follows.
\end{proof}

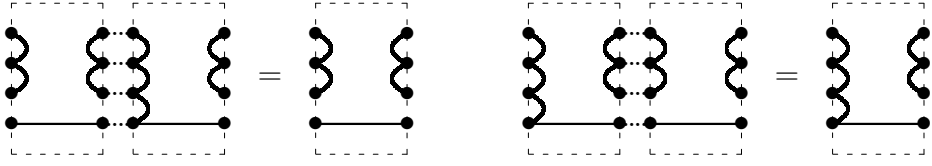
\begin{figure}
\special{em:linewidth 0.4pt} \unitlength 0.80mm
\begin{picture}(160.00,35.00)
\put(47.50,17.50){\makebox(0,0)[cc]{$=$}}
\put(132.50,17.50){\makebox(0,0)[cc]{$=$}}
\put(05.00,10.00){\makebox(0,0)[cc]{$\bullet$}}
\put(05.00,15.00){\makebox(0,0)[cc]{$\bullet$}}
\put(05.00,20.00){\makebox(0,0)[cc]{$\bullet$}}
\put(05.00,25.00){\makebox(0,0)[cc]{$\bullet$}}
\put(20.00,10.00){\makebox(0,0)[cc]{$\bullet$}}
\put(20.00,15.00){\makebox(0,0)[cc]{$\bullet$}}
\put(20.00,20.00){\makebox(0,0)[cc]{$\bullet$}}
\put(20.00,25.00){\makebox(0,0)[cc]{$\bullet$}}
\put(25.00,10.00){\makebox(0,0)[cc]{$\bullet$}}
\put(25.00,15.00){\makebox(0,0)[cc]{$\bullet$}}
\put(25.00,20.00){\makebox(0,0)[cc]{$\bullet$}}
\put(25.00,25.00){\makebox(0,0)[cc]{$\bullet$}}
\put(40.00,10.00){\makebox(0,0)[cc]{$\bullet$}}
\put(40.00,15.00){\makebox(0,0)[cc]{$\bullet$}}
\put(40.00,20.00){\makebox(0,0)[cc]{$\bullet$}}
\put(40.00,25.00){\makebox(0,0)[cc]{$\bullet$}}
\put(55.00,10.00){\makebox(0,0)[cc]{$\bullet$}}
\put(55.00,15.00){\makebox(0,0)[cc]{$\bullet$}}
\put(55.00,20.00){\makebox(0,0)[cc]{$\bullet$}}
\put(55.00,25.00){\makebox(0,0)[cc]{$\bullet$}}
\put(70.00,10.00){\makebox(0,0)[cc]{$\bullet$}}
\put(70.00,15.00){\makebox(0,0)[cc]{$\bullet$}}
\put(70.00,20.00){\makebox(0,0)[cc]{$\bullet$}}
\put(70.00,25.00){\makebox(0,0)[cc]{$\bullet$}}
\put(105.00,10.00){\makebox(0,0)[cc]{$\bullet$}}
\put(105.00,15.00){\makebox(0,0)[cc]{$\bullet$}}
\put(105.00,20.00){\makebox(0,0)[cc]{$\bullet$}}
\put(105.00,25.00){\makebox(0,0)[cc]{$\bullet$}}
\put(90.00,10.00){\makebox(0,0)[cc]{$\bullet$}}
\put(90.00,15.00){\makebox(0,0)[cc]{$\bullet$}}
\put(90.00,20.00){\makebox(0,0)[cc]{$\bullet$}}
\put(90.00,25.00){\makebox(0,0)[cc]{$\bullet$}}
\put(125.00,10.00){\makebox(0,0)[cc]{$\bullet$}}
\put(125.00,15.00){\makebox(0,0)[cc]{$\bullet$}}
\put(125.00,20.00){\makebox(0,0)[cc]{$\bullet$}}
\put(125.00,25.00){\makebox(0,0)[cc]{$\bullet$}}
\put(110.00,10.00){\makebox(0,0)[cc]{$\bullet$}}
\put(110.00,15.00){\makebox(0,0)[cc]{$\bullet$}}
\put(110.00,20.00){\makebox(0,0)[cc]{$\bullet$}}
\put(110.00,25.00){\makebox(0,0)[cc]{$\bullet$}}
\put(155.00,10.00){\makebox(0,0)[cc]{$\bullet$}}
\put(155.00,15.00){\makebox(0,0)[cc]{$\bullet$}}
\put(155.00,20.00){\makebox(0,0)[cc]{$\bullet$}}
\put(155.00,25.00){\makebox(0,0)[cc]{$\bullet$}}
\put(140.00,10.00){\makebox(0,0)[cc]{$\bullet$}}
\put(140.00,15.00){\makebox(0,0)[cc]{$\bullet$}}
\put(140.00,20.00){\makebox(0,0)[cc]{$\bullet$}}
\put(140.00,25.00){\makebox(0,0)[cc]{$\bullet$}}
\dashline(05.00,30.00)(20.00,30.00){1}
\dashline(05.00,30.00)(05.00,05.00){1}
\dashline(20.00,05.00)(05.00,05.00){1}
\dashline(20.00,05.00)(20.00,30.00){1}
\dashline(25.00,30.00)(40.00,30.00){1}
\dashline(25.00,30.00)(25.00,05.00){1}
\dashline(40.00,05.00)(25.00,05.00){1}
\dashline(40.00,05.00)(40.00,30.00){1}
\dashline(55.00,30.00)(70.00,30.00){1}
\dashline(55.00,30.00)(55.00,05.00){1}
\dashline(70.00,05.00)(55.00,05.00){1}
\dashline(70.00,05.00)(70.00,30.00){1}
\dashline(90.00,30.00)(105.00,30.00){1}
\dashline(90.00,30.00)(90.00,05.00){1}
\dashline(105.00,05.00)(90.00,05.00){1}
\dashline(105.00,05.00)(105.00,30.00){1}
\dashline(140.00,30.00)(155.00,30.00){1}
\dashline(140.00,30.00)(140.00,05.00){1}
\dashline(155.00,05.00)(140.00,05.00){1}
\dashline(155.00,05.00)(155.00,30.00){1}
\dashline(110.00,30.00)(125.00,30.00){1}
\dashline(110.00,30.00)(110.00,05.00){1}
\dashline(125.00,05.00)(110.00,05.00){1}
\dashline(125.00,05.00)(125.00,30.00){1}
\dottedline(20.00,10.00)(25.00,10.00){1}
\dottedline(20.00,15.00)(25.00,15.00){1}
\dottedline(20.00,20.00)(25.00,20.00){1}
\dottedline(20.00,25.00)(25.00,25.00){1}
\dottedline(110.00,10.00)(105.00,10.00){1}
\dottedline(110.00,15.00)(105.00,15.00){1}
\dottedline(110.00,20.00)(105.00,20.00){1}
\dottedline(110.00,25.00)(105.00,25.00){1}
\linethickness{1pt}
\drawline(05.00,10.00)(20.00,10.00)
\drawline(25.00,10.00)(40.00,10.00)
\drawline(55.00,10.00)(70.00,10.00)
\drawline(90.00,10.00)(105.00,10.00)
\drawline(140.00,10.00)(155.00,10.00)
\drawline(110.00,10.00)(125.00,10.00)
\linethickness{1pt}
\qbezier(05,25)(10,22.50)(05,20)
\qbezier(05,15)(10,17.50)(05,20)
\qbezier(20,15)(15,17.50)(20,20)
\qbezier(20,25)(15,22.50)(20,20)
\qbezier(25,25)(30,22.50)(25,20)
\qbezier(25,15)(30,17.50)(25,20)
\qbezier(25,15)(30,12.50)(25,10)
\qbezier(40,15)(35,17.50)(40,20)
\qbezier(40,25)(35,22.50)(40,20)
\qbezier(55,25)(60,22.50)(55,20)
\qbezier(55,15)(60,17.50)(55,20)
\qbezier(70,15)(65,17.50)(70,20)
\qbezier(70,25)(65,22.50)(70,20)
\qbezier(90,25)(95,22.50)(90,20)
\qbezier(90,15)(95,17.50)(90,20)
\qbezier(105,15)(100,17.50)(105,20)
\qbezier(105,25)(100,22.50)(105,20)
\qbezier(110,25)(115,22.50)(110,20)
\qbezier(110,15)(115,17.50)(110,20)
\qbezier(90,15)(95,12.50)(90,10)
\qbezier(140,15)(145,12.50)(140,10)
\qbezier(125,15)(120,17.50)(125,20)
\qbezier(125,25)(120,22.50)(125,20)
\qbezier(140,25)(145,22.50)(140,20)
\qbezier(140,15)(145,17.50)(140,20)
\qbezier(155,15)(150,17.50)(155,20)
\qbezier(155,25)(150,22.50)(155,20)
\end{picture}
\caption{Illustration of the proof of Lemma~\ref{lem31}}
\label{fig5}
\end{figure}

\begin{proposition}\label{prop32}
We have $\Psi(\mathcal{P}_d(\mathbf{n}))=I(n\mathbf{e}(1))$. 
\end{proposition}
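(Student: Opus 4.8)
The plan is to compute the image of $\Psi$ explicitly and match it against an intrinsic description of the ideal $I(n\mathbf{e}(1))$. The bridge between the two sides is the linear form $W(\mathbf{v}):=v_1+2v_2+\dots+dv_d$, which is exactly the quantity whose residue modulo $d$ defines the pieces $\Lambda_{d,k}$ from Subsection~\ref{s2.2}. First I would record how $W$ behaves along the covering relation. For $\mathbf{x}=\mathbf{e}(k)-\mathbf{e}(i)-\mathbf{e}(j)\in X_d$ one has $W(\mathbf{x})=k-i-j$; since $i,j,k\in\{1,\dots,d\}$ and $d\mid k-i-j$, this value lies in $[1-2d,\,d-2]$ and is a multiple of $d$, hence equals $0$ or $-d$. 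Thus along each down-step $W$ is either preserved or drops by $d$, and starting from $W(n\mathbf{e}(1))=n$ this yields the inclusion
\[
I(n\mathbf{e}(1))\subseteq\{\mathbf{v}\in\Lambda_d:\ W(\mathbf{v})\le n,\ W(\mathbf{v})\equiv n\ (\mathrm{mod}\ d)\}.
\]

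For the reverse inclusion I would build, for any $\mathbf{v}$ in the right-hand set, an explicit chain up to $n\mathbf{e}(1)$. While $W(\mathbf{v})<n$ (so $W(\mathbf{v})\le n-d$ by the congruence), passing from $\mathbf{v}$ to $\mathbf{v}+\mathbf{e}(d)$ is a down-cover of the latter, because $-\mathbf{e}(d)=\mathbf{e}(d)-\mathbf{e}(d)-\mathbf{e}(d)\in X_d$, and it raises $W$ by $d$. Once $W=n$, any vector other than $n\mathbf{e}(1)$ has some coordinate $v_m$ with $m\ge2$ nonzero, and passing to $\mathbf{v}+\mathbf{e}(1)+\mathbf{e}(m-1)-\mathbf{e}(m)$, using $\mathbf{e}(m)-\mathbf{e}(1)-\mathbf{e}(m-1)\in X_d$, keeps $W=n$ while raising the height $v_1+\dots+v_d$ by one. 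Since the height is bounded by $W=n$ and attains $n$ only at $n\mathbf{e}(1)$, this terminates, proving the equality, which I will call $(\star)$.

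With $(\star)$ in hand the proposition reduces to determining which vectors occur as $d$-signatures. For $\Psi(\mathcal{P}_d(\mathbf{n}))\subseteq I(n\mathbf{e}(1))$, fix $\sigma$ with $\Psi(\sigma)=(v_1,\dots,v_d)$ and let $p_1,\dots,p_r$ be the cardinalities $|\sigma_j\cap\underline{n}|$ over the propagating parts. A propagating part counted by $v_i$ has $p_j\equiv i\pmod d$ and $p_j\ge1$, hence $p_j\ge i$; summing gives $W(\Psi(\sigma))=\sum_i i\,v_i\le\sum_j p_j\le n$. As the parts lying entirely in $\underline{n}$ have cardinalities divisible by $d$, we also get $n\equiv\sum_j p_j\equiv\sum_i i\,v_i\pmod d$, so $\Psi(\sigma)\in I(n\mathbf{e}(1))$ by $(\star)$. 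For the opposite inclusion I would realise every admissible vector by a canonical element in the sense of Subsection~\ref{s5.5}: given $\mathbf{v}$ with $W(\mathbf{v})\le n$ and $W(\mathbf{v})\equiv n\pmod d$, take, for each $i$, exactly $v_i$ propagating parts consisting of a block of $i$ consecutive points of $\underline{n}$ joined to a block of $i$ consecutive points of $\underline{n}'$ (using $W(\mathbf{v})$ points on each side), and group the remaining $n-W(\mathbf{v})$ points on each side, a non-negative multiple of $d$, into parts of size $d$ contained in $\underline{n}$ or in $\underline{n}'$. This $\sigma$ is $d$-tonal with $\Psi(\sigma)=\mathbf{v}$, and with $(\star)$ the two inclusions finish the proof.

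The main obstacle is $(\star)$, and specifically its upward inclusion; everything else is residue bookkeeping. The two key realisations are that $W$ is the correct monovariant (non-increasing, changing only by multiples of $d$) and that the two move-types above — padding by $\mathbf{e}(d)$ and splitting a high-residue coordinate into residue $1$ plus remainder — suffice to climb from any admissible vector to $n\mathbf{e}(1)$; one must still check non-negativity and termination of each move, but these are routine. If one prefers to avoid $(\star)$, the same moves can be mirrored directly in the monoid: merging two propagating parts of residues $i,j$ into one of residue $k\equiv i+j$ realises each down-cover, so that the identity partition, whose signature is $n\mathbf{e}(1)$, generates all of $I(n\mathbf{e}(1))$ under merges, giving the inclusion $I(n\mathbf{e}(1))\subseteq\Psi(\mathcal{P}_d(\mathbf{n}))$ by induction on height.
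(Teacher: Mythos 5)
Your proof is correct, but it takes a genuinely different route from the paper. The paper proves Proposition~\ref{prop32} by downward induction on the rank: it reduces to canonical elements via Lemma~\ref{lem31}, realises each down-cover $\mathbf{v}+\mathbf{e}(k)-\mathbf{e}(i)-\mathbf{e}(j)$ by merging two propagating parts of residues $i$ and $j$ (so that $\sigma'\sigma=\sigma'$ gives the ideal inclusion), and observes that every canonical element arises from the identity by such merges followed by splitting off $d$-element non-propagating parts; this is essentially the fallback you sketch in your last paragraph. You instead first establish the intrinsic membership criterion $(\star)$, namely $I(n\mathbf{e}(1))=\{\mathbf{v}\in\Lambda_d:\ W(\mathbf{v})\leq n,\ W(\mathbf{v})\equiv n \pmod d\}$ with $W(\mathbf{v})=v_1+2v_2+\dots+dv_d$, and then match signatures against it by residue bookkeeping. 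I checked the details you deferred: along a cover $W$ changes by $k-i-j\in[1-2d,d-2]$, so indeed by $0$ or $-d$; the padding move is legitimate since $-\mathbf{e}(d)=\mathbf{e}(d)-\mathbf{e}(d)-\mathbf{e}(d)\in X_d$; the splitting move uses $\mathbf{e}(m)-\mathbf{e}(1)-\mathbf{e}(m-1)\in X_d$ (the divisibility condition is the trivial $d\mid 0$, and the case $m=2$ gives $\mathbf{e}(2)-2\mathbf{e}(1)$), preserves $W$, raises height by one, and keeps all coordinates non-negative because $v_m\geq 1$; termination follows from $\mathrm{ht}(\mathbf{v})\leq W(\mathbf{v})$ with equality exactly at $n\mathbf{e}(1)$ when $W=n$. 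On the monoid side your estimate $p_j\geq i$ for a propagating part in residue class $i\in\{1,\dots,d\}$ is right (including $i=d$, where $p_j\equiv 0$ and $p_j\geq 1$ force $p_j\geq d$), and your explicit realisation of any admissible $\mathbf{v}$ is precisely a canonical element in the sense of Subsection~\ref{s5.5}. What each approach buys: the paper's induction simultaneously sets up the bijection between canonical elements and ideal elements that is reused for Corollary~\ref{cor33} and for matching covering relations in Theorem~\ref{thm34}, whereas your criterion $(\star)$, which the paper never states in this generality, is of independent value --- since vectors with $W(\mathbf{v})=m$ biject with partitions of $m$ into parts of size at most $d$, it yields $C_n^{(d)}=P_n^{(d)}+P_{n-d}^{(d)}+P_{n-2d}^{(d)}+\dots$, i.e.\ Theorem~\ref{thmdpart}, without any further analysis of the monoid.
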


\begin{proof}
We use downward induction to prove that, for each $k=n,n-1,n-2,\dots,0$, the map $\Psi$ 
induces a bijection between the set of all canonical elements of 
rank $k$ in $\mathcal{P}_d(\mathbf{n})$ and the set of all elements of height 
$k$ in $I(n\mathbf{e}(1))$. The statement of the corollary then will follow from
Lemma~\ref{lem31}.

The basis of the induction is $k=n$. In this case on the left hand side we have only
one canonical element, the identity element, while on the right hand side we have
$n\mathbf{e}(1)$ which is the image of the identity element under $\Psi$.

Let $\mathbf{v}\in I(n\mathbf{e}(1))$ be an element of height $k$ and let 
$\sigma$ be a canonical element such that $\Psi(\sigma)=\mathbf{v}$. Let
$\mathbf{e}(k)-\mathbf{e}(i)-\mathbf{e}(j)\in X_d$ be such that 
$\mathbf{v}+(\mathbf{e}(k)-\mathbf{e}(i)-\mathbf{e}(j))\in I(n\mathbf{e}(1))$. Then $\sigma$ has a part
$\sigma_s$ such that $d$ divides $|\sigma_s\cap\underline{n}|-i$ and a different part
$\sigma_t$ such that $d$ divides $|\sigma_t\cap\underline{n}|-j$. Consider the
element $\sigma'$ obtained from $\sigma$ by uniting $\sigma_s$ with $\sigma_t$
and keeping all other parts. Then $\sigma'\sigma=\sigma'$ and hence 
$\mathcal{P}_d(\mathbf{n})\sigma'\mathcal{P}_d(\mathbf{n})\subset \mathcal{P}_d(\mathbf{n})\sigma\mathcal{P}_d(\mathbf{n})$. Moreover, 
$\Psi(\sigma')=\mathbf{v}+\mathbf{e}(k)-\mathbf{e}(i)-\mathbf{e}(j)$.
This implies surjectivity of the induction step.

At the same time, the form of the canonical element immediately implies that it is 
obtained from the identity element using the unification procedure described in the 
previous paragraph, followed by splitting off $d$-element parts contained in 
$\underline{n}$ or $\underline{n}'$ (note that the latter parts do not affect the value of $\Psi$
by definition). This implies that $\Psi$ takes values inside
$I(n\mathbf{e}(1))$ and completes the proof.
\end{proof}

\begin{corollary}\label{cor33}
For  each $\sigma\in \mathcal{P}_d(\mathbf{n})$, there is a unique canonical 
$\pi\in \mathcal{P}_d(\mathbf{n})$ such that $\sigma\mathcal{J}\pi$.
\end{corollary}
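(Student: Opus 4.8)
The plan is to upgrade the existence statement of Lemma~\ref{lem31} to a uniqueness statement by showing that the $d$-signature $\Psi$ is a \emph{complete} invariant of $\mathcal{J}$-classes. The proof of Proposition~\ref{prop32} already establishes more than the equality of images: it shows that $\Psi$ restricts to a bijection from the set of canonical elements of $\mathcal{P}_d(\mathbf{n})$ onto $I(n\mathbf{e}(1))$, so in particular $\Psi$ is injective on canonical elements. Granting the claim that $\sigma\mathcal{J}\pi$ forces $\Psi(\sigma)=\Psi(\pi)$, any two canonical elements lying in a common $\mathcal{J}$-class have equal $d$-signature and must therefore coincide; combined with the existence part of Lemma~\ref{lem31} this gives the corollary. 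So the entire proof reduces to verifying that $\Psi$ is constant on $\mathcal{J}$-classes.

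First I would record the standard submultiplicativity of $\mathrm{rank}$: since a propagating part of a product $\alpha\beta$ arises only from a through-path that propagates in both factors, one has $\mathrm{rank}(\alpha\beta)\leq\min(\mathrm{rank}(\alpha),\mathrm{rank}(\beta))$. If $\sigma\mathcal{J}\pi$, writing $\pi=\alpha\sigma\beta$ and $\sigma=\gamma\pi\delta$ and chaining these inequalities forces $\mathrm{rank}(\sigma)=\mathrm{rank}(\sigma\beta)=\mathrm{rank}(\alpha\sigma\beta)=\mathrm{rank}(\pi)$. Hence every one-sided multiplication occurring in the factorization $\pi=\alpha(\sigma\beta)$ is rank-preserving, and it suffices to prove that a rank-preserving left or right multiplication leaves $\Psi$ unchanged; the desired chain $\Psi(\sigma)=\Psi(\sigma\beta)=\Psi(\alpha\sigma\beta)=\Psi(\pi)$ then follows.

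The technical heart is this rank-preserving invariance, and here I would use two facts already noted in Subsection~\ref{s5.3}: any part of a $d$-tonal partition lying entirely in $\underline{n}$ or entirely in $\underline{n}'$ has cardinality divisible by $d$, and each propagating part satisfies $|\sigma_j\cap\underline{n}|\equiv|\sigma_j\cap\underline{n}'|\pmod d$, so $\Psi$ may be read off equally from the bottom counts or the top counts. For a rank-preserving left multiplication $\sigma\mapsto\alpha\sigma$, the bottom row $\underline{n}$ is untouched by $\alpha$; rank-preservation sets up a bijection between the propagating parts of $\sigma$ and those of $\alpha\sigma$ in which each propagating part of $\alpha\sigma$ absorbs exactly one propagating part of $\sigma$, and any further $\sigma$-parts it absorbs lie entirely in $\underline{n}'$ and so contribute nothing to $\underline{n}$. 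Crucially, a $\sigma$-part contained entirely in $\underline{n}$ has no point in the middle row and cannot be fused at all, so each propagating part of $\alpha\sigma$ has bottom count $|{\cdot}\cap\underline{n}|$ equal to that of the unique propagating $\sigma$-part feeding it; thus $\Psi$ is preserved exactly. Reading $\Psi$ from the top counts and arguing symmetrically handles rank-preserving right multiplication. I expect the main obstacle to be precisely this fusion bookkeeping — making precise the rank-induced bijection of propagating parts and checking that no bottom-only (respectively top-only) part can be absorbed into a propagating part — after which the $\mathcal{J}$-invariance of $\Psi$, and hence the corollary, follows.
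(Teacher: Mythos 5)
Your proposal is correct, and it is in fact more complete than the paper's own argument, which disposes of the corollary in one sentence: existence is Lemma~\ref{lem31}, and uniqueness is said to follow from the observation that $\Psi$ is injective on canonical elements (immediate from the definitions, since for a canonical $\pi$ every propagating part satisfies $1\leq|\pi_i\cap\underline{n}|=|\pi_i\cap\underline{n}'|\leq d$ and is thus determined by its residue, while all remaining parts are $d$-element segments). But injectivity of $\Psi$ on canonical elements yields uniqueness only if one also knows that two $\mathcal{J}$-equivalent elements have the same signature --- exactly the invariance you identified as the crux. The paper never proves this; it is tacitly assumed both here and again in Theorem~\ref{thm34}, whose proof cites this corollary. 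Your route supplies the missing lemma, and the details check out: the squeeze $\mathrm{rank}(\pi)=\mathrm{rank}(\alpha\sigma\beta)\leq\mathrm{rank}(\sigma\beta)\leq\mathrm{rank}(\sigma)=\mathrm{rank}(\pi)$ does force every one-sided factor in the factorization to be rank-preserving; and in a rank-preserving product each propagating part of the product contains at least one propagating part of the inner factor, these collections are pairwise disjoint, and equality of ranks forces exactly one each with none left over, so the induced map on propagating parts is a bijection. Since a part lying entirely in the untouched row has no vertex in the middle row and is a singleton component of the join, it cannot be absorbed, and the absorbed extra parts contribute nothing to the relevant row; hence the bottom (respectively, via the mod-$d$ equality $|\sigma_j\cap\underline{n}|\equiv|\sigma_j\cap\underline{n}'|$, the top) traces are preserved and $\Psi$ is unchanged. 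In short: the paper's proof and yours share the same skeleton (Lemma~\ref{lem31} plus injectivity of $\Psi$ on canonicals from Proposition~\ref{prop32}), but yours makes the argument genuinely self-contained by proving the $\mathcal{J}$-invariance of the signature that the paper leaves implicit.
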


\begin{proof}
Taking into account Proposition~\ref{prop32}, the claim follows from the observation that
different canonical elements are sent by $\Psi$ to different elements in 
$I(n\mathbf{e}(1))$. 
\end{proof}

\subsection{A combinatorial description of the $\mathcal{J}$-order}\label{s5.6}

Our second main result, which explains our interest in $C_n^{(d)}$, is the following:

\begin{theorem}\label{thm34}
The map $\Psi:(\mathcal{P}_d(\mathbf{n})/\mathcal{J},\rightsquigarrow)\to
(I(n\mathbf{e}(1)),\prec)$ is an isomorphism of posets.
\end{theorem}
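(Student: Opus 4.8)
The plan is to factor $\Psi$ through the canonical representatives and then match the two orders by comparing covering relations. First I would verify that $\Psi$ genuinely descends to $\mathcal{P}_d(\mathbf{n})/\mathcal{J}$: the reduction steps used in the proof of Lemma~\ref{lem31} (splitting off a $d$-element block contained in $\underline{n}$ or $\underline{n}'$, and permuting the points of $\underline{n}$ and, independently, of $\underline{n}'$) change neither the residues $|\sigma_i\cap\underline{n}|\bmod d$ of the propagating parts nor their propagating status, and hence leave $\Psi$ unchanged. Together with the uniqueness in Corollary~\ref{cor33} this shows that any two $\mathcal{J}$-equivalent elements share the same $\Psi$-value, so the induced map is well defined, and its bijectivity onto $I(n\mathbf{e}(1))$ is then immediate from Proposition~\ref{prop32} and Corollary~\ref{cor33}. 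It remains to show that this bijection and its inverse are order preserving.

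For the implication $\Psi(\sigma)\prec\Psi(\pi)\Rightarrow\overline{\sigma}^{\mathcal{J}}\rightsquigarrow\overline{\pi}^{\mathcal{J}}$ I would reuse the construction in the proof of Proposition~\ref{prop32}. There, for a canonical $\pi$ with $\Psi(\pi)=\mathbf{w}$ and any $\mathbf{x}=\mathbf{e}(c)-\mathbf{e}(i)-\mathbf{e}(j)\in X_d$ with $\mathbf{w}+\mathbf{x}\in\Lambda_d$, uniting the two relevant propagating parts produces $\sigma'$ with $\sigma'\pi=\sigma'$, hence $\mathcal{P}_d(\mathbf{n})\sigma'\mathcal{P}_d(\mathbf{n})\subset\mathcal{P}_d(\mathbf{n})\pi\mathcal{P}_d(\mathbf{n})$, and with $\Psi(\sigma')=\mathbf{w}+\mathbf{x}$. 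Thus every covering relation $\mathbf{v}\lessdot\mathbf{w}$ of $I(n\mathbf{e}(1))$ lifts to a relation $\rightsquigarrow$; since $\prec$ is the transitive closure of $\lessdot$ and $\rightsquigarrow$ is transitive, $\Psi^{-1}$ is order preserving.

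The bulk of the work is the reverse implication, that $\Psi$ is order preserving: if $\overline{\sigma}^{\mathcal{J}}\rightsquigarrow\overline{\pi}^{\mathcal{J}}$, i.e. $\sigma=\alpha\pi\beta$ for some $\alpha,\beta\in\mathcal{P}_d(\mathbf{n})$, then $\Psi(\sigma)\preceq\Psi(\pi)$. I would isolate this as a multiplicativity lemma, $\Psi(\sigma\tau)\preceq\Psi(\tau)$ for all $\sigma,\tau$ (the left-multiplication version following from the top--bottom flip anti-automorphism, under which $\Psi$ is invariant because a propagating part has equal top and bottom residues), and then apply it twice. To prove the lemma I would track propagating parts through the composition: the bottom strands of each propagating part of $\tau$ all land in a single part $R$ of $\sigma\tau$, which partitions the propagating parts of $\tau$ into groups. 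If $R$ is propagating, its residue is congruent mod $d$ to the sum of the residues in its group; if $R$ is non-propagating, it is a block of $\underline{n}'$, so the $d$-tonal condition forces that group's residue sum to be $\equiv 0\bmod d$. Consequently $\Psi(\sigma\tau)$ is obtained from $\Psi(\tau)$ by a multiset operation that repeatedly either replaces two residues $i,j$ by the single residue $k\in\{1,\dots,d\}$ with $k\equiv i+j\pmod d$ (adding $\mathbf{e}(k)-\mathbf{e}(i)-\mathbf{e}(j)\in X_d$) or deletes a residue $d$ (adding $-\mathbf{e}(d)=\mathbf{e}(d)-\mathbf{e}(d)-\mathbf{e}(d)\in X_d$). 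Each step lowers only coordinates that count parts still present, so every intermediate vector lies in $\Lambda_d$, and we obtain a chain of coverings from $\Psi(\tau)$ down to $\Psi(\sigma\tau)$, giving $\Psi(\sigma\tau)\preceq\Psi(\tau)$.

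Combining the two directions, $\Psi$ is a bijection that both preserves and reflects the order, hence an isomorphism of the graded posets. The step I expect to be most delicate is the residue bookkeeping in the multiplicativity lemma, and specifically the key point that the $d$-tonal constraint prevents a propagating part from silently dropping out: a group of propagating parts that loses its connection to the top must have residue sum divisible by $d$, which is exactly what guarantees that the loss is expressible through vectors of $X_d$ rather than by an illegal pure decrement $-\mathbf{e}(i)$.
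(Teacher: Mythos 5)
Your proposal is correct, and while the bijectivity part and the lifting of covers coincide with the paper's argument (both cite Proposition~\ref{prop32} and Corollary~\ref{cor33}, and both lift a cover $\mathbf{v}\lessdot\mathbf{w}$ by uniting two parts of a canonical representative), your treatment of the harder direction is genuinely different. The paper disposes of the implication $\overline{\sigma}^{\mathcal{J}}\rightsquigarrow\overline{\pi}^{\mathcal{J}}\Rightarrow\Psi(\sigma)\preceq\Psi(\pi)$ very tersely: it notes that $(\mathcal{P}_d(\mathbf{n})/\mathcal{J},\rightsquigarrow)$ is graded and asserts, ``counting modulo $d$,'' that the covering relations of the two posets match under $\Psi$, again deferring to the proof of Proposition~\ref{prop32}. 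You instead prove an explicit monotonicity lemma $\Psi(\alpha\tau\beta)\preceq\Psi(\tau)$ by tracking propagating parts through the composition: the parts of $\tau$ lying entirely in one row cannot join anything, so each propagating part of $\sigma\tau$ collects a group of propagating parts of $\tau$ whose residues add, while a group whose class fails to propagate becomes a one-row block of $\sigma\tau$ and hence, by the $d$-tonal condition, has residue sum $\equiv 0\pmod d$ -- exactly what makes every loss expressible by $X_d$-moves (pairwise merges $\mathbf{e}(k)-\mathbf{e}(i)-\mathbf{e}(j)$ and the deletion $-\mathbf{e}(d)$) through intermediate vectors that are honest multiset counts, hence in $\Lambda_d$. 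This buys a self-contained, checkable proof of order preservation (and, via antisymmetry of $\prec$, an independent proof that $\Psi$ is constant on $\mathcal{J}$-classes), where the paper relies on the reader reconstructing the same bookkeeping from the canonical-element machinery; the flip anti-automorphism reduction of right multiplication to left multiplication is clean and uses correctly that top and bottom residues of a propagating part agree. One small caveat: in your covering-lift paragraph, ``uniting the two relevant propagating parts'' does not literally cover the single extreme cover $\mathbf{0}\lessdot\mathbf{e}(d)$ (which occurs in $I(n\mathbf{e}(1))$ exactly when $d$ divides $n$), since there is then only one propagating part; there one must instead split that part into its top and bottom halves, the splitting move already used in Lemma~\ref{lem31} and invoked in the last paragraph of the proof of Proposition~\ref{prop32}. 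Since you defer to that construction anyway, this is an imprecision inherited from the paper's own brevity rather than a gap in your argument.
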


\begin{proof}
From Proposition~\ref{prop32}, we have a map $\Psi:\mathcal{P}_d(\mathbf{n})/\mathcal{J}\to
I(n\mathbf{e}(1))$. This map is bijective by the combination of
Proposition~\ref{prop32} and Corollary~\ref{cor33}. From the third paragraph of the
proof of  Proposition~\ref{prop32} it follows that, for each pair of elements
$\mathbf{v},\mathbf{w}\in I(n\mathbf{e}(1))$ such that $\mathbf{v}\prec \mathbf{w}$,
there are $\sigma,\pi\in \mathcal{P}_d(\mathbf{n})$ such that 
$\mathcal{P}_d(\mathbf{n})\sigma\mathcal{P}_d(\mathbf{n})\subset \mathcal{P}_d(\mathbf{n})\pi\mathcal{P}_d(\mathbf{n})$,
$\Psi(\sigma)=\mathbf{v}$ and $\Psi(\pi)=\mathbf{w}$.

On the other hand, from the last paragraph of the
proof of  Proposition~\ref{prop32} it follows that the poset 
$(\mathcal{P}_d(\mathbf{n})/\mathcal{J},\rightsquigarrow)$ is a graded poset.
Now, applying the argument from the third paragraph of the
proof of  Proposition~\ref{prop32} once more and counting modulo $d$, one
checks that the covering relations in $(\mathcal{P}_d(\mathbf{n})/\mathcal{J},\rightsquigarrow)$
and $(I(n\mathbf{e}(1)),\prec)$ match precisely via $\Psi$. The claim follows.
\end{proof}

\section{Enumeration of $\mathcal{J}$-classes for arbitrary $d$}\label{s6}

\subsection{Enumeration via $d$-part partitions}\label{s6.01}

The proof of Proposition~\ref{prop32} gives a way to write a formula for $C_{n}^{(d)}$
in the general case. 
Let $d\in\mathbb{N}$ and $n\in\mathbb{Z}_{\geq 0}$. 
Denote by $P_n^{(d)}$ the number of partitions of $n$ with at most $d$ parts.
By taking the dual partition, we get the usual fact that 
$P_n^{(d)}$ also equals the number of partitions of $n$ in which
each part does not exceed $d$.
For simplicity, we set  $P_n^{(d)}=0$ when $n<0$.

\begin{theorem}\label{thmdpart}
We have $C_{n}^{(d)}=P_n^{(d)}+P_{n-d}^{(d)}+P_{n-2d}^{(d)}+P_{n-3d}^{(d)}+\dots$. 
\end{theorem}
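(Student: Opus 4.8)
The plan is to pin down the principal ideal $I(n\mathbf{e}(1))$ explicitly as a region of lattice points in $\Lambda_d$ and then count those points by slicing along a single linear functional. Throughout I would write $S(\mathbf{v}):=v_1+2v_2+\dots+dv_d$ for $\mathbf{v}\in\Lambda_d$; this is exactly the functional already used in Subsection~\ref{s2.2} to define the pieces $\Lambda_{d,k}$, and its behaviour along $\prec$ is what controls everything.

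First I would establish the characterization
\begin{displaymath}
I(n\mathbf{e}(1))=\{\mathbf{v}\in\Lambda_d:\ S(\mathbf{v})\le n\ \text{ and }\ S(\mathbf{v})\equiv n\pmod d\}.
\end{displaymath}
For the inclusion ``$\subseteq$'', note that every generator $\mathbf{x}=\mathbf{e}(k)-\mathbf{e}(i)-\mathbf{e}(j)\in X_d$ satisfies $S(\mathbf{x})=k-i-j$, which is divisible by $d$ by the very definition of $X_d$ and which lies in $\{-d,0\}$: since $k,i,j\in\{1,\dots,d\}$ we have $1-2d\le k-i-j\le d-2$, and the only multiples of $d$ in this range are $0$ and $-d$. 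Hence each covering step $\mathbf{v}\lessdot\mathbf{w}$, where $\mathbf{v}=\mathbf{w}+\mathbf{x}$, leaves $S$ unchanged modulo $d$ and never increases it; iterating down from $n\mathbf{e}(1)$, where $S=n$, yields both constraints for every $\mathbf{v}\in I(n\mathbf{e}(1))$. For ``$\supseteq$'', given $\mathbf{v}$ with $S(\mathbf{v})=n-td$ for some $t\ge0$, I would exhibit a partition $\boldsymbol{\lambda}\vdash n$ whose residue vector is $\mathbf{v}$: take $v_i$ parts equal to $i$ for each $i=1,\dots,d$ and then enlarge one single part by $td$ (this keeps all residues, raises the sum from $n-td$ to $n$, and needs a part to exist, which it does once $\mathbf{v}\neq\mathbf{0}$); in the degenerate case $\mathbf{v}=\mathbf{0}$ one has $d\mid n$ and takes $\boldsymbol{\lambda}=\varnothing$. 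By Theorem~\ref{thm21} the map $\Phi$ sends $\boldsymbol{\lambda}$ to its residue vector $\mathbf{v}$ and has codomain $I(n\mathbf{e}(1))$, so $\mathbf{v}\in I(n\mathbf{e}(1))$. The step that really carries the argument is the ``$\subseteq$'' direction, i.e. the observation that $S$ is monotone along $\prec$; this is what forces the ideal to be a clean ``downward slab'' $\{S\le n\}$ inside the residue class $\{S\equiv n\}$ rather than some more intricate staircase, and once it is in place the rest is bookkeeping.

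With the characterization in hand the enumeration is immediate. Slicing by the value of $S$ gives
\begin{displaymath}
C_n^{(d)}=\bigl|I(n\mathbf{e}(1))\bigr|=\sum_{t\ge0}\bigl|\{\mathbf{v}\in\mathbb{Z}_{\geq0}^{d}:\ S(\mathbf{v})=n-td\}\bigr|.
\end{displaymath}
Finally I would observe that a solution of $v_1+2v_2+\dots+dv_d=m$ in non-negative integers is precisely a partition of $m$ all of whose parts are at most $d$, reading $v_i$ as the multiplicity of the part $i$; by the dual-partition remark stated just before the theorem, the number of such partitions is $P_m^{(d)}$. Substituting $m=n-td$ and using the convention $P_m^{(d)}=0$ for $m<0$ then yields $C_n^{(d)}=\sum_{t\ge0}P_{n-td}^{(d)}=P_n^{(d)}+P_{n-d}^{(d)}+P_{n-2d}^{(d)}+\dots$, as claimed.
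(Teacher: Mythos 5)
Your proof is correct, but it takes a genuinely different route from the paper's. The paper deduces the theorem from the monoid picture: analyzing the proof of Proposition~\ref{prop32}, it counts canonical elements of $\mathcal{P}_d(\mathbf{n})$, observing that a canonical element with $2k$ non-propagating parts (each of cardinality exactly $d$) is determined by the partition of $n-kd$ into parts of size at most $d$ recorded by the intersections of its propagating parts with $\underline{n}$; summing over $k$ gives the formula. You instead stay entirely inside the lattice picture and prove the standalone characterization $I(n\mathbf{e}(1))=\{\mathbf{v}\in\Lambda_d:\ S(\mathbf{v})\le n,\ S(\mathbf{v})\equiv n \pmod d\}$, where $S(\mathbf{v})=v_1+2v_2+\dots+dv_d$: the forward inclusion rests on your (correct) computation that $S$ takes only the values $0$ and $-d$ on $X_d$, since the multiples of $d$ in $[1-2d,\,d-2]$ are exactly these two; the reverse inclusion invokes Theorem~\ref{thm21} together with an explicit partition of $n$ realizing a prescribed residue vector, with the case $\mathbf{v}=\mathbf{0}$ handled by $\varnothing$. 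Slicing by $S$ then finishes the count. Under the bijections $\Psi$ and $\Phi$ the two arguments count the same objects: for a canonical $\sigma$, the value $S(\Psi(\sigma))$ is the total cardinality of the intersections of the propagating parts with $\underline{n}$, so your parameter $t$ is the paper's $k$. What your version buys is an explicit polyhedral description of the ideal (lattice points of a slab intersected with a congruence class), which makes Corollary~\ref{cordpart} immediate, essentially realizes the bijection promised in Remark~\ref{remnewnn}, and needs only Section~\ref{s2} plus Theorem~\ref{thm21}, no monoid theory; what the paper's version buys is semantic content, identifying each term $P_{n-kd}^{(d)}$ with concrete data in $\mathcal{P}_d(\mathbf{n})$ (the number of non-propagating parts). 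One small point of care: your appeal to Theorem~\ref{thm21} really uses the explicit map $\Phi$ constructed in its proof rather than the bare statement that the posets are isomorphic; that is legitimate within the paper, and could anyway be avoided by exhibiting directly a chain of covers in $\Lambda_d$ from $n\mathbf{e}(1)$ down to $\mathbf{v}$.
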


\begin{proof}
To prove this claim we analyze the  proof of Proposition~\ref{prop32}.
According to the latter proof, $C_{n}^{(d)}$ enumerates canonical elements
in $\mathcal{P}_d(\mathbf{n})$. Let $\sigma$ be a canonical element. 
Let $\sigma_1,\sigma_2,\dots,\sigma_k$ be the list of all parts of $\sigma$
contained in $\underline{n}$ (note that $k$ might be zero). 
Then each of these parts has cardinality $d$ and we may consider the set 
\begin{displaymath}
\underline{n}^{\sigma}:= 
\underline{n}\setminus(\sigma_1\cup\sigma_2\cup\dots\cup\sigma_k) 
\end{displaymath}
which thus has cardinality $n-kd$.

Cardinalities of intersections of all propagating parts of $\sigma$ with 
$\underline{n}^{\sigma}$ determine a partition of $n-kd$ in which
each part does not exceed  $d$. It is straightforward that this gives a bijection
between the set of all canonical elements
in $\mathcal{P}_d(\mathbf{n})$ with $2k$ non-propagating parts and 
all partitions of $n-kd$ for which each part does not exceed $d$.
The claim follows.
\end{proof}

\begin{corollary}\label{cordpart}
For $d\geq 1$, we have
\begin{displaymath}
\sum_{n\geq 1}C_{n}^{(d)}t^n=\frac{1}{(1-t^d)\cdot(1-t)(1-t^2)(1-t^3)\dots (1-t^d)}. 
\end{displaymath}
\end{corollary}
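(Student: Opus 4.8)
The plan is to obtain the generating function directly from Theorem~\ref{thmdpart} by recognizing its right-hand side as a convolution of two familiar series. First I would recall the classical Euler generating function for the numbers $P_n^{(d)}$. Since $P_n^{(d)}$ counts partitions of $n$ with at most $d$ parts, which (as already observed in the text immediately preceding Theorem~\ref{thmdpart}) equals the number of partitions of $n$ all of whose parts are at most $d$, one has the standard identity
\begin{displaymath}
\sum_{n\geq 0}P_n^{(d)}t^n=\frac{1}{(1-t)(1-t^2)(1-t^3)\cdots(1-t^d)},
\end{displaymath}
obtained by expanding each factor $\frac{1}{1-t^i}=\sum_{m\geq 0}t^{mi}$ and reading the product as a choice of how many parts equal to $i$ are used, for $i=1,2,\dots,d$.

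Next I would rewrite Theorem~\ref{thmdpart} as the statement $C_n^{(d)}=\sum_{j\geq 0}P_{n-jd}^{(d)}$, valid for all $n\geq 0$ under the convention $P_m^{(d)}=0$ for $m<0$ fixed in that theorem. The key observation is that this is precisely the coefficient of $t^n$ in the product of $\sum_{m\geq 0}P_m^{(d)}t^m$ with the geometric series $\frac{1}{1-t^d}=\sum_{j\geq 0}t^{jd}$: forming the Cauchy product and collecting the coefficient of $t^n$ yields exactly $\sum_{j\geq 0}P_{n-jd}^{(d)}$. The interchange of the order of summation in the resulting double sum is legitimate as an identity of formal power series, since for each fixed $n$ only the finitely many pairs $(j,m)$ with $jd+m=n$ contribute. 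Combining this with the Euler identity gives
\begin{displaymath}
\sum_{n\geq 0}C_n^{(d)}t^n=\frac{1}{1-t^d}\cdot\sum_{m\geq 0}P_m^{(d)}t^m=\frac{1}{(1-t^d)\cdot(1-t)(1-t^2)(1-t^3)\cdots(1-t^d)},
\end{displaymath}
which is the asserted formula.

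There is essentially no deep obstacle here: the entire combinatorial content is already packaged in Theorem~\ref{thmdpart}, and the remaining work is the routine identification of a convolution with a product of generating functions. The only point deserving a word of care is the bookkeeping at the bottom of the range. One computes $C_0^{(d)}=P_0^{(d)}=1$, corresponding to the empty partition, so that the constant term of the left-hand series is $1$, matching the constant term of the right-hand side. Consequently the identity in fact holds with the summation beginning at $n=0$, and I would simply note that the $n=0$ term supplies the constant $1$ on both sides, so the formula is correct once this term is included.
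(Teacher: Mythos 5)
Your proof is correct and follows essentially the same route as the paper: the paper likewise cites the standard Euler identity $\sum_{n}P_{n}^{(d)}t^n=\frac{1}{(1-t)(1-t^2)\cdots(1-t^d)}$ and combines it with Theorem~\ref{thmdpart}, the convolution with $\frac{1}{1-t^d}$ being exactly the mechanism you spell out. Your closing remark about the constant term is a nice touch, since the paper's statement writes the sum as starting at $n\geq 1$ while the right-hand side has constant term $1$, so the identity indeed holds as stated only with the $n=0$ term $C_0^{(d)}=1$ included, as you observe.
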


\begin{proof}
This follows by combining the usual equality
\begin{displaymath}
\sum_{n\geq 1}P_{n}^{(d)}t^n=\frac{1}{(1-t)(1-t^2)(1-t^3)\dots (1-t^d)} 
\end{displaymath}
with the statement of Theorem~\ref{thmdpart}.
\end{proof}

\begin{remark}
{\rm  
It is easy to check that, for $d=3$, we indeed have the equality
\begin{displaymath}
\frac{1+t^2+t^3+t^5}{(1-t)(1-t^3)(1-t^4)(1-t^6)}=\frac{1}{(1-t)(1-t^2)(1-t^3)^2}. 
\end{displaymath}
Here the left hand side is the original generating function for $A028289$.
}
\end{remark}

\begin{remark}\label{remnewnn}
{\rm 
The poset $\Pi^{(d)}$ of partitions with at most $d$ parts can be defined using the same 
approach as we used to define $\Lambda_d$. The assertion of Theorem~\ref{thmdpart} can 
then be interpreted as a bijection between certain (co)ideals in $\Pi^{(d)}$ and $\Lambda_d$.
Such a bijection admits a direct combinatorial construction. 
} 
\end{remark}

\subsection{Examples for $d=4$ and $d=5$}\label{s6.1}

The sequence $C_n^{(4)}$ starts as follows:
\begin{displaymath}
1,1,2,3,6,7,11,14,21,25,\dots.
\end{displaymath}
The sequence $C_n^{(5)}$ starts as follows:
\begin{displaymath}
1,1,2,3,5,8,11,15,21,\dots.
\end{displaymath}
We note that 
none of the sequences $C_n^{(d)}$ for $d\geq 4$ appeared on
\cite{OEIS} before. However, as noted, they are simple cumulative sums 
of classical
sequences. 

\subsection{Relation to partition function}\label{s6.3}

Recall the classical {\em partition function} $P(n)$ which gives, 
for $n\in\mathbb{Z}_{\geq 0}$, the  number of partitions of $n$, 
see the sequence  $A000041$ in \cite{OEIS}. One general 
observation for the numbers $C_{n,h}^{(d)}$ is the following:

\begin{proposition}\label{prop777}
If $n-h<d$ and $2(n-h)<n$, then $C_{n,h}^{(d)}=P(n-h)$. 
\end{proposition}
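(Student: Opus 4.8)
The plan is to reduce the statement to a transparent combinatorial count using the isomorphism of Theorem~\ref{thm21}, and then to let the two hypotheses do the work. Write $m:=n-h$. By Theorem~\ref{thm21} the poset $(I(n\mathbf{e}(1)),\prec)$ is isomorphic to $(\Pi^*_{n,d},<_d)$ via a degree-preserving map that sends the $\sim_d$-class of a partition with $k$ parts to a vector of height $k$. The only element of $\Pi^*_{n,d}$ not of this form is $\varnothing$ (present only when $d\mid n$), which has degree $0$; since $2m<n$ forces $n\geq 1$ and hence $h>n/2\geq 1$, this element is irrelevant. Thus the isomorphism identifies $I(n\mathbf{e}(1))\cap\Lambda_d^{(h)}$ with the set of $\sim_d$-classes of partitions of $n$ into \emph{exactly} $h$ parts, so that $C_{n,h}^{(d)}$ equals the number of such classes. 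It remains to count them.

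The key observation is that the hypothesis $m<d$ forces every part to be small. Indeed, if $\boldsymbol{\lambda}=(\lambda_1\geq\dots\geq\lambda_h\geq 1)$ is a partition of $n$ into exactly $h$ parts, then $\lambda_i-1\leq\lambda_1-1\leq\sum_{j}(\lambda_j-1)=n-h=m\leq d-1$, so $1\leq\lambda_i\leq d$ for every $i$. Now recall that the $\sim_d$-class of $\boldsymbol{\lambda}$ records only the multiset of residues of its parts modulo $d$, taken in $\{1,\dots,d\}$; for a value lying in $\{1,\dots,d\}$ this residue coincides with the value itself. Consequently, on partitions of $n$ into exactly $h$ parts the relation $\sim_d$ is trivial, i.e.\ no two distinct such partitions are equivalent. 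Therefore $C_{n,h}^{(d)}$ equals the plain number of partitions of $n$ into exactly $h$ parts.

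Finally I would apply the standard bijection subtracting $1$ from each part, which matches partitions of $n$ into exactly $h$ parts with partitions of $m=n-h$ into at most $h$ parts. The second hypothesis $2m<n$ gives $m<h$, so any partition of $m$ has at most $m<h$ parts and the constraint ``at most $h$ parts'' is vacuous; hence this count is exactly $P(m)=P(n-h)$, as claimed. The only genuinely delicate point is the part bound $\lambda_i\leq d$: it is what makes $\sim_d$ collapse to equality, and it is precisely where $n-h<d$ enters, while $2(n-h)<n$ is used only to turn ``exactly $h$ parts'' into an unconstrained partition count. As a cross-check one could instead argue via the canonical-element description from the proof of Theorem~\ref{thmdpart}, which writes $C_{n,h}^{(d)}$ as a sum over $k\geq 0$ of the number of partitions of $n-kd$ into exactly $h$ parts each at most $d$; since $m<d\leq kd$ for $k\geq 1$ forces $n-kd<h$, only the $k=0$ term survives and yields the same answer.
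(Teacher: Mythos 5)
Your proof is correct, but it takes a genuinely different route from the paper's. The paper argues directly inside $\Lambda_d$: it introduces the statistic $\alpha(\mathbf{v})=v_2+2v_3+3v_4+\cdots$, observes that the generators $\mathbf{e}(i+j)-\mathbf{e}(i)-\mathbf{e}(j)$ with $i+j\leq d$ raise $\alpha$ by exactly $1$ while every other element of $X_d$ changes it by $1-d$ (which $n-h<d$ rules out, since $\alpha\geq 0$ on $\Lambda_d$ and only $n-h$ steps are taken), and hence sends a height-$h$ element $\mathbf{v}\in I(n\mathbf{e}(1))$ to the partition of $n-h$ having $v_{j+1}$ parts equal to $j$; injectivity is immediate (only $v_1$ is forgotten, and it is determined by the height), and surjectivity is proved by induction on $n-h$, which is where $2(n-h)<n$ enters in the paper. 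You instead invoke the poset isomorphism of Theorem~\ref{thm21} to identify the height-$h$ layer of $I(n\mathbf{e}(1))$ with the $\sim_d$-classes of partitions of $n$ into exactly $h$ parts, note that $n-h<d$ bounds every part by $\lambda_1\leq (n-h)+1\leq d$ so that $\sim_d$ collapses to equality on this set, and then apply the subtract-one bijection, with $2(n-h)<n$ entering transparently as $n-h<h$ to make the ``at most $h$ parts'' restriction vacuous. The two arguments realize the same underlying bijection ($v_i$ counts parts congruent, hence here equal, to $i$ modulo $d$), but yours buys a shorter, induction-free derivation at the cost of leaning on the full strength of Theorem~\ref{thm21}, whereas the paper's version is self-contained and exhibits the map explicitly. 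The small points you needed all check out: $h\geq 1$ excludes the adjoined minimum $\varnothing$, and your cross-check via the rank-refined canonical-element count behind Theorem~\ref{thmdpart} is also sound, since $n-kd<h$ kills every term with $k\geq 1$.
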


\begin{proof}
To prove the assertion we construct a bijective map between $I(n\mathbf{e}(1))\cap\Lambda_d^{(h)}$ 
and the set of all partitions of $n-h$.

For $\mathbf{v}\in\Lambda_d$, set $\alpha(\mathbf{v})=v_2+2v_3+3v_4+\dots$. For $i,j\in\{1,2,\dots,d\}$ 
such that $i+j<d$, we have $(i+j-1)-(i-1)-(j-1)=1$. Therefore, for such values of $i$ and $j$ and for any 
$\mathbf{v},\mathbf{w}\in \Lambda_d$, we have $\alpha(\mathbf{v})=\alpha(\mathbf{w})+1$ provided that
\begin{displaymath}
\mathbf{v}=\mathbf{w}+\mathbf{e}(i+j)-\mathbf{e}(i)-\mathbf{e}(j).
\end{displaymath}
Note that $\mathbf{e}(i+j)-\mathbf{e}(i)-\mathbf{e}(j)\in X_d$. 

Assume now that $n-h<d$ and $\mathbf{v}\in I(n\mathbf{e}(1))$ is of height $h<n$.
Then $\mathbf{v}$ is obtained from $n\mathbf{e}(1)$ by adding $n-h$ vectors from $X_d$ of the form 
$\mathbf{e}(i+j)-\mathbf{e}(i)-\mathbf{e}(j)$, for some $i$ and $j$ as above. Indeed, let $s>1$ be the 
smallest index such that $v_i\neq 0$ (which exists as $h<n$ and $n-h<d$). 
Then the vector $\mathbf{v}$ is obtained from the vector
\begin{displaymath}
\mathbf{w}=(v_1+1,v_2,\dots,v_{s-2},v_{s-1}+1,v_{s}-1,v_{s+1},\dots,v_d)
\end{displaymath}
by adding $\mathbf{e}((s-1)+1)-\mathbf{e}(s-1)-\mathbf{e}(1)$
(here $w_1=v_1+2$ if $s=2$). Applying a similar procedure
to $\mathbf{w}$ and proceeding inductively, we get the claim. This implies that
\begin{displaymath}
\Upsilon(\mathbf{v}):=(v_2+v_3+v_4+\dots,v_3+v_4+\dots,\dots)
\end{displaymath}
is a partition of $n-h$. Since $n-h$ is fixed and $\mathbf{v}$ is, clearly, recoverable
from $\Upsilon(\mathbf{v})$, the map
$\Upsilon$ from $I(n\mathbf{e}(1))\cap\Lambda_d^{(h)}$ 
to  the set of all partitions of $n-h$ is injective.

To prove surjectivity of $\Upsilon$, assume that $n-h=x_2+2x_3+3x_4+\dots$, for some non-negative $x_2,x_3,\dots$.
We proceed by induction on $n-h$. If $n-h=0$, surjectivity of our map is obvious. To prove the induction step,
we write $k=i+j$, for some $1\leq i,j \leq k-1$, and consider the partition of $n-h-1$ given by decreasing
$x_k$ by $1$, increasing $x_i$ by $1$ and increasing $x_j$ by $1$ (if $i=j$, the outcome is that
$x_i$ is increased by $2$).  From the combination of the inductive assumption and the condition 
$2(n-h)<n$, it follows that the 
resulting partition of $n-h-1$ is in the image of our map. Applying the definition of $\prec$ it follows
that the original partition of  $n-h$ is also in the image of our map. This completes the proof.
\end{proof}

\begin{problem}\label{problem2}
Find a closed formula for $C_{n,h}^{(d)}$ for all $d,n,h$.
\end{problem}

\vspace{0.3cm}

\noindent
Ch.~A.: Department of Pure Mathematics, University of Leeds, Leeds, 
LS2 9JT, UK, e-mail: {\tt mmcaa\symbol{64}maths.leeds.ac.uk }\\
current address: Department of Mathematics, College of Science, 
University of Sulaimani,  Iraq
\vspace{0.3cm}

\noindent
P.~M.: Department of Pure Mathematics, University of Leeds, Leeds, 
LS2 9JT, UK, e-mail: {\tt ppmartin\symbol{64}maths.leeds.ac.uk }
\vspace{0.3cm}

\noindent
V.~M: Department of Mathematics, Uppsala University, Box. 480,
SE-75106, Uppsala, SWEDEN, email: {\tt mazor\symbol{64}math.uu.se}

\end{document}